
\documentclass{amsart}
%%%%%%%%%%%%%%%%%%%%%%%%%%%%%%%%%%%%%%%%%%%%%%%%%%%%%%%%%%%%%%%%%%%%%%%%%%%%%%%%%%%%%%%%%%%%%%%%%%%%%%%%%%%%%%%%%%%%%%%%%%%%%%%%%%%%%%%%%%%%%%%%%%%%%%%%%%%%%%%%%%%%%%%%%%%%%%%%%%%%%%%%%%%%%%%%%%%%%%%%%%%%%%%%%%%%%%%%%%%%%%%%%%%%%%%%%%%%%%%%%%%%%%%%%%%%
\usepackage{amsmath}
\usepackage{amsfonts}
\usepackage{amssymb}
\usepackage{graphicx}
\usepackage{latexsym}
\usepackage{amscd}
\usepackage[all, cmtip]{xy}
\usepackage[utf8]{inputenc}
\usepackage{amsthm}
\usepackage{xcolor}
\usepackage[a4paper,  margin=1in]{geometry}
\usepackage{epic}
\usepackage{pstricks}
\usepackage[all]{xy}
\usepackage{graphicx}
\usepackage{epic,eepic}
\usepackage{epsfig}
\usepackage{float}
\usepackage{extarrows}
\usepackage{tikz}
\usepackage{calc}

\setcounter{MaxMatrixCols}{10}
%TCIDATA{OutputFilter=Latex.dll}
%TCIDATA{Version=5.50.0.2960}
%TCIDATA{<META NAME="SaveForMode" CONTENT="1">}
%TCIDATA{BibliographyScheme=Manual}
%TCIDATA{LastRevised=Monday, January 18, 2021 09:01:55}
%TCIDATA{<META NAME="GraphicsSave" CONTENT="32">}

\providecommand{\U}[1]{\protect\rule{.1in}{.1in}}
\providecommand{\U}[1]{\protect\rule{.1in}{.1in}}
\setlength{\textwidth}{14cm} \setlength{\textheight}{21.6cm}
\providecommand{\U}[1]{\protect\rule{.1in}{.1in}}
\newtheorem{theorem}{Theorem}[section]

\newtheorem{corollary}[theorem]{Corollary}

\newtheorem{definition}[theorem]{Definition}
\newtheorem{example}[theorem]{Example}

\newtheorem{lemma}[theorem]{Lemma}

\newtheorem{question}[theorem]{Question}
\newtheorem{proposition}[theorem]{Proposition}
\newtheorem{remark}[theorem]{Remark}

\DeclareMathOperator{\lcm}{lcm}
\DeclareMathOperator{\gcd1}{gcd}
\usetikzlibrary{decorations.markings}

\tikzstyle{vertex}=[circle, draw, fill, inner sep=0pt, minimum size=6pt]

\begin{document}
\title[Factorization of ideals in Leavitt Path Algebras]{Variations of
primeness and Factorization of ideals in Leavitt Path Algebras}
\author[Aljohani]{Sarah Aljohani}
\address{Department of Mathematics and Statistics, Saint Louis University,
St. Louis, MO-63103, USA}
\email{sarah.aljohani@slu.edu}
\author[Radler]{Katherine Radler}
\address{Department of Mathematics and Statistics, Saint Louis University,
St. Louis, MO-63103, USA}
\email{katie.radler@slu.edu}
\author[Rangaswamy]{Kulumani M. Rangaswamy}
\address{Departament of Mathematics, University of Colorado at Colorado
Springs, Colorado-80918, USA}
\email{krangasw@uccs.edu}
\author[Srivastava]{Ashish K. Srivastava}
\address{Department of Mathematics and Statistics, Saint Louis University,
St. Louis, MO-63103, USA}
\email{ashish.srivastava@slu.edu}
\thanks{The work of the fourth author is partially supported by a grant from
Simons Foundation (grant number 426367).}
\dedicatory{Dedicated to the loving memory of Anupam, the 7 year old budding
Geologist and Astronomer}

\begin{abstract}
In this paper we describe three different variations of prime ideals:
strongly irreducible ideals, strongly prime ideals and insulated prime
ideals in the context of Leavitt path algebras. We give necessary and
sufficient conditions under which a proper ideal of a Leavitt path algebra $%
L $ is a product as well as an intersection of finitely many of these
different types of prime ideals. Such factorizations, when they are
irredundant, are shown to be unique except for the order of the factors. We
also characterize the Leavitt path algebras $L$ in which every ideal admits
such factorizations and also in which every ideal is one of these special
type of ideals.
\end{abstract}

\maketitle

\section{Introduction}

\noindent The multiplicative ideal theory in commutative algebra has been an
active area of research with contributions from many researchers including
Robert Gilmer and William Heinzer. Recently, the development of the
multiplicative ideal theory of Leavitt path algebras has become an active
area of research. Leavitt path algebras are algebraic analogues of graph
C*-algebras and are also natural generalizations of Leavitt algebras of type 
$($$1,n$$)$ constructed by William Leavitt. What stands out quite surprising
is that, even though Leavitt path algebras are highly non-commutative, the
multiplicative ideal theory of Leavitt path algebras is quite similar to
that of commutative algebras. Specifically, Leavitt path algebras satisfy a
number of characterizing properties of special types of commutative integral
domains such as the B\'{e}zout domains, the Dedekind domains, the Pr\"{u}fer
domains etc., in terms of their ideal properties (see \cite{EEKRR}, \cite%
{EMR}, \cite{R-2}). These integral domains are well-known for admitting
satisfactory ideal factorizations. Because of this, investigating the
factorizations of ideals in a Leavitt path algebra as products or
intersections of special types of ideals such as the prime, the semiprime,
the irreducible, the primary ideals is quite promising. Prime ideals and
their various generalizations play essential role in developing the
multiplicative ideal theory of commutative algebras. So it is natural to
study these notions in the case of Leavitt path algebras to develop its
multiplicative ideal theory and the various types of ideal factorizations.
The theory of prime ideals and that of semi-prime ideals for Leavitt path
algebras were developed in \cite{R-1}, \cite{AMR}. In this paper we study
three different variations in the notion of prime ideals and irreducible
ideals in the context of Leavitt path algebras.

Recall that if $P$ is a prime ideal of a ring $R$, then for any two ideals $%
A,B$ of $R$, $A\cap B\subseteq P$ implies that $A\subseteq P$ or $B\subseteq
P$. The converse of this statement is not true. For example, in the ring $%
\mathbb{Z}$ of integers, it can be verified that the ideal $8\mathbb{Z}$ has
this property by using the prime power factorization of integers in $\mathbb{%
Z}$, but $8\mathbb{Z}$ is not a prime ideal. Ideals of a ring having this
property were first studied by L. Fuchs \cite{F} who called them primitive
ideals. Blair \cite{B} called them strongly irreducible ideals. The idea was
clearly inspired by strengthening the conditions required for an ideal to be
an irreducible ideal. Recall that an ideal $I$ of a ring $R$ is said to be 
\textit{irreducible} if, for any two ideals $A,B$ of $R$, $A\cap B=I$
implies that $A=I$ or $B=I$. Interestingly, strongly irreducible ideals are
mentioned in Bourbaki's treatise on commutative algebras \cite{Bourbaki}
where they are referred to as \textit{quasi-prime} ideals. In \cite{HRR},
Heinzer, Ratliff Jr. and Rush investigated non-prime strongly irreducible
ideals of commutative noetherian rings. Recently, N. Schwartz \cite{S}
studied the truncated valuations induced by strongly irreducible ideals in
commutative rings. In general, an irreducible ideal need not be strongly
irreducible \cite{S}, however, in the case of Leavitt path algebras, our
description of the strongly irreducible ideals in Section 3 shows that these
two notions coincide. The main result in this section gives necessary and
sufficient conditions under which a proper ideal $I$ of a Leavitt path
algebra $L$ can be represented as a product as well as an intersection of
finitely many strongly irreducible ideals. Interestingly, the graded part $%
gr(I)$ of such an ideal $I$ plays an important role and, in this case, $%
I/gr(I)$ must be finitely generated. We also prove uniqueness theorems by
showing that factorizations of an ideal $I$ of $L$ as an irredundant product
or an irredundant intersection of finitely many strongly irreducible ideals
are unique except for the order of the factors (Theorems \ref{Uniqueness}
and \ref{uniqueness-intersect}). We give several characterizations (both
algebraic and graphical) of Leavitt path algebras in which every proper
ideal is uniquely an irredundant intersection/product of finitely many
strongly irreducible ideals. This answers, in the context of Leavitt path
algebras, an open question by W. Heinzer and B. Olberding (\cite{HO}) raised
for the case of commutative rings. As a by-product, we obtain a
characterization of the Leavitt path algebras which are Laskerian. We also
provide characterizations of Leavitt path algebras in which each ideal is
strongly irreducible.

It is well-known that if a prime ideal $P$ contains an intersection of
finitely many ideals $A_{i}$, $i=1,2,\ldots,n$, then $P$ contains at least
one of the ideals $A_{i}$. However, this statement fails to hold for a prime
ideal if we consider an infinite intersection of ideals. An ideal $P$ is
called \textit{strongly prime (\cite{JOT}) }if the above statement holds
even for an infinite intersection of ideals. In Section 4, we characterize
the strongly prime ideals of a Leavitt path algebra $L$ and describe when a
given ideal of $L$ can be factored as a product of strongly prime ideals. We
also describe when every ideal of $L$ admits such a factorization. In \cite%
{JOT}, the authors call a commutative ring\ $R$\textit{\ strongly
zero-dimensional} if every prime ideal of $R$ is strongly prime and they
prove a number of interesting properties of strongly zero-dimensional
commutative rings. We end the Section 4 by characterizing all the strongly
zero-dimensional Leavitt path algebras.

Recall that an arbitrary ring $R$ is a prime ring if for all $a,b\in R$,
whenever $a\neq0,b\neq0$, then there is an element $c\in R$ such that $%
acb\neq0$. In their attempts to consider the non-commutative version of
Kaplansky's conjecture on prime von Neumann regular rings, Handelman and
Lawrence \cite{HL} strengthen this concept of prime rings and consider rings
with a stronger property. They do this by restricting, for each $a\neq0$,
the choice of the $c$ to a finite set (independent of $b$, but depending on $%
a$). To make this definition precise, they define a \textit{(right) insulator%
} for $a\in R$ to be a finite subset $S(a)$ of $R$, such that the right
annihilator $ann_{R}\{ac:c\in S(a)\}=0$. A ring $R$ is said to be a \textit{%
right insulated prime ring} if every non-zero element of $R$ has a right
insulator; and an ideal $I$ of ring $R$ to be a \textit{right insulated
prime ideal }if $R/I$ is a right insulated prime ring. It is known that, in
general, the notion of insulated prime ring is not left-right symmetric. In
fact, Handelman and Lawrence constructed a ring that is right insulated
prime but not left insulated prime. In Section 5, we first describe when a
Leavitt path algebra is a left/right insulated prime ring. Interestingly,
the distinction between left and right insulated primeness vanishes for
Leavitt path algebras. We show (Theorem \ref{Type I LPA}) that a Leavitt
path algebra is a left/right insulated prime ring exactly when it is a
simple ring or it is isomorphic to the matrix ring $M_{n}(K[x,x^{-1}])$ some
integer $n\geq1$. We characterize the insulated prime ideals of Leavitt path
algebras and also describe conditions under which each ideal of a Leavitt
path algebra can be factored as a product of insulated prime ideals.

\bigskip

\section{Basics of Leavitt path algebras}

\noindent A (directed) graph $E=(E^{0}, E^{1}, r, s)$ consists of two sets $%
E^{0}$ and $E^{1}$ together with maps $r,s:E^{1}\rightarrow E^{0}$. The
elements of $E^{0}$ are called \textit{vertices} and the elements of $E^{1}$ 
\textit{edges}. A vertex $v$ is called a \textit{sink} if it emits no edges
and a vertex $v$ is called a \textit{regular} \textit{vertex} if it emits a
non-empty finite set of edges. An \textit{infinite emitter} is a vertex
which emits infinitely many edges. For each edge $e\in E^{1}$, we consider
an edge in the opposite direction, called ghost edge and denote it as $%
e^{\ast}$. So we have $r(e^{\ast})=s(e)$, and $s(e^{\ast})=r(e)$.

A\textit{\ path} $\mu$ of length $n>0$ is a finite sequence of edges $%
\mu=e_{1}e_{2}\cdot\cdot\cdot e_{n}$ with $r(e_{i})=s(e_{i+1})$ for all $%
i=1,\cdot\cdot\cdot,n-1$. We denote the length of the path $\mu$ as $|\mu|$.
Thus for $\mu=e_{1}e_{2}\cdot\cdot\cdot e_{n}$, we have $|\mu|=n$. In this
case $\mu^{\ast}=e_{n}^{\ast}\cdot\cdot\cdot e_{2}^{\ast}e_{1}^{\ast}$ is
the corresponding ghost path. A vertex is considered a path of length $0$.
The set of all vertices on a path $\mu$ is denoted as $\mu^0$. A path $\mu$ $%
=e_{1}\dots e_{n}$ in $E$ is \textit{closed} if $r(e_{n})=s(e_{1})$, in
which case $\mu$ is said to be \textit{based at the vertex }$s(e_{1})$. A
closed path $\mu$ as above is called \textit{simple} provided it does not
pass through its base more than once, i.e., $s(e_{i})\neq s(e_{1})$ for all $%
i=2,...,n$. The closed path $\mu$ is called a \textit{cycle} if it does not
pass through any of its vertices twice, that is, if $s(e_{i})\neq s(e_{j})$
for every $i\neq j$.

A graph $E$ is said to satisfy \textit{Condition $($K$)$}, if any vertex $v$
on a simple closed path $c$ is also the base of a another simple closed path 
$c^{\prime}$ different from $c$. An \textit{exit }for a path $\mu=e_{1}\dots
e_{n}$ is an edge $e$ such that $s(e)=s(e_{i})$ for some $i$ and $e\neq
e_{i} $. A graph $E$ is said to satisfy \textit{Condition $($L$)$}, if every
cycle in $E$ has an exit.

If there is a path from vertex $u$ to a vertex $v$, we write $u\geq v$. A
subset $D$ of vertices is said to be \textit{downward directed }\ if for any 
$u,v\in D$, there exists a $w\in D$ such that $u\geq w$ and $v\geq w$. When
we say that a graph $E$ is downward directed, then it means $E^{0}$ is
downward directed. A subset $H$ of $E^{0}$ is called \textit{hereditary} if,
whenever $v\in H$ and $w\in E^{0}$ satisfy $v\geq w$, then $w\in H$. A
hereditary set is \textit{saturated} if, for any regular vertex $v$, $%
r(s^{-1}(v))\subseteq H$ implies $v\in H$.

Given an arbitrary graph $E$ and a field $K$, the \textit{Leavitt path
algebra }$L_{K}(E)$ is defined to be the $K$-algebra generated by a set $%
\{v:v\in E^{0}\}$ of pair-wise orthogonal idempotents together with a set of
variables $\{e,e^{\ast}:e\in E^{1}\}$ which satisfy the following conditions:

(1) \ $s(e)e=e=er(e)$ for all $e\in E^{1}$.

(2) $r(e)e^{\ast}=e^{\ast}=e^{\ast}s(e)$\ for all $e\in E^{1}$.

(3) (The ``CK-1 relations") For all $e,f\in E^{1}$, $e^{\ast}e=r(e)$ and $%
e^{\ast}f=0$ if $e\neq f$.

(4) (The ``CK-2 relations") For every regular vertex $v\in E^{0}$, 
\begin{equation*}
v=\sum_{e\in E^{1},s(e)=v}ee^{\ast}.
\end{equation*}
Every Leavitt path algebra $L_{K}(E)$ is a $\mathbb{Z} $\textit{-graded
algebra}, namely, $L_{K}(E)={\displaystyle\bigoplus\limits_{n\in\mathbb{Z}}}
L_{n}$ induced by defining, for all $v\in E^{0}$ and $e\in E^{1}$, $\deg
(v)=0$, $\deg(e)=1$, $\deg(e^{\ast})=-1$. Here the $L_{n}$ are abelian
subgroups satisfying $L_{m}L_{n}\subseteq L_{m+n}$ for all $m,n\in\mathbb{Z}$%
. Further, for each $n\in\mathbb{Z}$, the \textit{homogeneous component }$%
L_{n}$ is given by 
\begin{equation*}
L_{n}=\{ {\textstyle\sum} k_{i}\alpha_{i}\beta_{i}^{\ast}\in L:\text{ }%
|\alpha_{i}|-|\beta_{i}|=n\}.
\end{equation*}
Elements of $L_{n}$ are called \textit{homogeneous elements}. An ideal $I$
of $L_{K}(E)$ is said to be a \textit{graded ideal} if $I=$ ${\displaystyle%
\bigoplus\limits_{n\in\mathbb{Z}}}(I\cap L_{n})$. If $A,B$ are graded
modules over a graded ring $R$, we write $A\cong_{gr}B$ if $A$ and $B$ are
graded isomorphic and we write $A\oplus_{gr}B$ to denote a graded direct
sum. We will also be using the usual grading of a matrix of finite order.
For this and for the various properties of graded rings and graded modules,
we refer to \cite{HR} and \cite{NvO}.

For any ideal $I$ of a Leavitt path algebra $L_K(E)$, $I\cap E^0$ is a
hereditary saturated subset \cite[Lemma 2.4.3]{AAS}. A \textit{breaking
vertex }of a hereditary saturated subset $H$ is an infinite emitter $w\in
E^{0}\backslash H$ with the property that $0<|s^{-1}(w)\cap
r^{-1}(E^{0}\backslash H)|<\infty$. The set of all breaking vertices of $H$
is denoted by $B_{H}$. For any $v\in B_{H}$, $v^{H}$ denotes the element $%
v-\sum_{s(e)=v,r(e)\notin H}ee^{\ast}$. Given a hereditary saturated subset $%
H$ and a subset $S\subseteq B_{H}$, $(H,S)$ is called an \textit{admissible
pair.} Given an admissible pair $(H,S)$, the ideal generated by $H\cup
\{v^{H}:v\in S\}$ is denoted by $I(H,S)$. It was shown in \cite{T} that the
graded ideals of $L_{K}(E)$ are precisely the ideals of the form $I(H,S)$
for some admissible pair $(H,S)$. We have a partial ordering on the set of
all admissible pairs of $L_K(E)$ defined as $(H_1, S_1) \leq (H_2, S_2)$ if
and only if $H_1\subseteq H_2$ and $S_1\subseteq H_2\cup S_2$. Moreover, $%
L_{K}(E)/I(H,S)\cong L_{K}(E\backslash(H,S))$. Here $E\backslash(H,S)$ is a 
\textit{Quotient graph of }$E$ where $(E\backslash(H,S))^{0}=(E^{0}%
\backslash H)\cup\{v^{\prime}:v\in B_{H}\backslash S\}$ and $%
(E\backslash(H,S))^{1}=\{e\in E^{1}:r(e)\notin H\}\cup\{e^{\prime}:e\in
E^{1} $ with $r(e)\in B_{H}\backslash S\}$ and $r,s$ are extended to $%
(E\backslash(H,S))^{1}$ by setting $s(e^{\prime})=s(e)$ and $%
r(e^{\prime})=r(e)$.

A \textit{maximal tail }is a subset $M$ of $E^{0}$ satisfying the following
three properties:

\begin{enumerate}
\item $M$ is downward directed;

\item If $u\in M$ and $v\in E^{0}$ satisfies $v\geq u$, then $v\in M$;

\item If $u\in M$ emits edges, there is at least one edge $e$ with $s(e)=u$
and $r(e)\in M$.
\end{enumerate}

A graph $E$ is called row-finite if $s^{-1}(v)$ is finite for each $v\in E^0$%
. A graph $E$ is called a comet if it is row-finite, there is a cycle $c$
without exits in $E$ and every path in $E$ ends at a vertex on $c$.

We will be using the fact that the Jacobson radical (and in particular, the
prime/Baer radical) of $L_{K}(E)$ is always zero (see \cite{AAS}).

We will make the convention that if $c$ is a cycle in the graph $E$ based at
a vertex $v$, and if $f(x)=1+k_{1}x+\cdots+k_{n}x^{n}\in K[x]$, then $%
f(c)=v+k_{1}c+\cdots+k_{n}c^{n}\in L_{K}(E)$.

In the following, ``ideal" means ``two-sided ideal" and, given a subset $S$
of $L_{K}(E)$, we shall denote by $<S>$, the ideal generated by $S$ in $%
L_{K}(E)$.

We begin with listing the various results and basic observations from the
literature about ideals in Leavitt path algebras that we will be using
throughout this paper. The next theorem describes a generating set for
ideals in a Leavitt path algebra.

\begin{theorem}
\label{genators of ideal}$($Theorem 4, \cite{R_0}$)$ Let $L_{K}(E)$ be a
Leavitt path algebra and let $I$ be an ideal of $L_{K}(E)$ with $I\cap
E^{0}=H$ and $S=\{v\in B_{H}:v^{H}\in I\}$. Then $I=I(H,S)+{\displaystyle%
\sum \limits_{t\in T}} <f_{t}(c_{t})>$ where $T$ is an index set (may be
empty) such that for each $t\in T$, $c_{t}$ is a cycle without exits in $%
E\backslash(H,S)$ and $f_{t}(x)\in K[x]$ with a non-zero constant term.
\end{theorem}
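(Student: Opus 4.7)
The plan is to build $I$ up from its graded part and then analyze the remaining non-graded piece in a simpler quotient graph. First I would verify that $H:=I\cap E^{0}$ is a hereditary saturated subset of $E^{0}$: hereditariness follows from the relations $s(e)e=e=er(e)$ together with $I$ being a two-sided ideal, and saturation follows from applying the CK-2 relation $v=\sum_{s(e)=v}ee^{\ast}$ to any regular vertex $v$ whose out-neighbors lie in $H$. By definition $S\subseteq B_{H}$, so $(H,S)$ is an admissible pair, and since $H\cup\{v^{H}:v\in S\}\subseteq I$ we obtain $I(H,S)\subseteq I$; moreover $I(H,S)$ is the largest graded ideal contained in $I$ (any graded ideal is determined by its vertex set and its breaking data).

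Next I would pass to the quotient $L_{K}(E)/I(H,S)\cong L_{K}(E\setminus(H,S))$ and let $\bar{I}$ denote the image of $I$. By construction $\bar{I}$ contains no vertex of the quotient graph, and for no breaking vertex $w$ of $E\setminus(H,S)$ does $\bar{I}$ contain $w^{H^{\prime}}$. The theorem then reduces to the following claim: an ideal $\bar{I}$ of $L_{K}(E\setminus(H,S))$ with these two properties is generated by elements of the form $f_{t}(c_{t})$, where each $c_{t}$ is a cycle without exits in $E\setminus(H,S)$ and $f_{t}(x)\in K[x]$ has nonzero constant term.

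The technical heart of the argument, and the main obstacle, is the Reduction Theorem for Leavitt path algebras. It asserts that every nonzero $a\in L_{K}(E)$ admits monomials $\mu,\nu$ such that $\mu^{\ast}a\nu$ is either a nonzero scalar multiple of a vertex, a nonzero scalar multiple of some $w^{H^{\prime}}$, or an element of the form $g(c)$ with $c$ a cycle without exits and $g(x)\in K[x]$ having nonzero constant term. Applied to any nonzero $\bar{a}\in\bar{I}$, the first two options are excluded by the defining properties of $\bar{I}$, so $\bar{I}$ must contain some such $g(c)$. Indexing by a set $T$ one representative polynomial $f_{t}(c_{t})$ for each cycle $c_{t}$ without exits on which some such polynomial lands in $\bar{I}$ gives $\sum_{t\in T}<f_{t}(c_{t})>\subseteq\bar{I}$.

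To finish I would establish the reverse containment. Suppose for contradiction that $\bar{a}\in\bar{I}$ lies outside $J:=\sum_{t\in T}<f_{t}(c_{t})>$. Passing to the further quotient $L_{K}(E\setminus(H,S))/J$ and applying the Reduction Theorem to the image of $\bar{a}$ produces a nonzero vertex, a nonzero $w^{H^{\prime}}$, or a polynomial $h(c^{\prime})$ on a cycle without exits. Choosing $T$ maximally from the outset (via Zorn's lemma, so that every polynomial on a cycle without exits lying in $\bar{I}$ is already captured) rules out the third case, while the first two contradict the choice of $H$ and $S$. Lifting back to $L_{K}(E)$ yields the desired presentation $I=I(H,S)+\sum_{t\in T}<f_{t}(c_{t})>$.
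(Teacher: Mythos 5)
This theorem is not proved in the paper at all: it is imported as Theorem~4 of \cite{R_0} (equivalently, the Structure Theorem for Ideals, Theorem~2.8.10 of \cite{AAS}), so your attempt has to be judged against that standard argument. Your setup matches it: $H=I\cap E^{0}$ is hereditary and saturated, $(H,S)$ is admissible, $I(H,S)=gr(I)\subseteq I$, and the image $\bar{I}$ of $I$ in $L_{K}(E)/I(H,S)\cong L_{K}(E\backslash(H,S))$ meets no vertex of the quotient graph (your separate clause about elements $w^{H^{\prime}}$ is already subsumed here, since the primed vertices $v^{\prime}$ of $E\backslash(H,S)$ correspond to the cosets of the elements $v^{H}$). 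Applying the Reduction Theorem to a nonzero element of $\bar{I}$ to produce some $g(c)$ with $c$ a cycle without exits is also the right move, and it yields $\sum_{t\in T}\langle f_{t}(c_{t})\rangle\subseteq\bar{I}$.

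The reverse containment, which is the real content of the theorem, has a genuine gap. You propose to pass to $L_{K}(E\backslash(H,S))/J$ with $J=\sum_{t}\langle f_{t}(c_{t})\rangle$ and apply the Reduction Theorem there; but $J$ is a non-graded ideal, so this quotient is not a Leavitt path algebra, and the Reduction Theorem is a statement about Leavitt path algebras --- it simply cannot be invoked in that ring. In addition, ``choosing $T$ maximally via Zorn's lemma'' does not deliver the property you need: $T$ indexes cycles, and maximality of a family of cycles says nothing about whether a \emph{single} polynomial $f_{t}$ per cycle captures every $g$ with $g(c_{t})\in\bar{I}$. What is actually needed there is that $\{g\in K[x,x^{-1}]:g(c_{t})\in\bar{I}\}$ is an ideal of the principal ideal domain $K[x,x^{-1}]$, hence generated by one element, which can be normalized to lie in $K[x]$ with nonzero constant term because $c_{t}$ is invertible in $v_{t}Lv_{t}$. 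The standard repair of your last step is: first prove that $\bar{I}$ is contained in the \emph{graded} ideal $M$ generated by all vertices lying on cycles without exits in $E\backslash(H,S)$ --- this is the delicate step you have skipped, and it is where one legitimately passes to the Leavitt path algebra $L_{K}(E\backslash(H,S))/M$ and applies the Reduction Theorem; then use $M\cong\bigoplus_{i}M_{\Lambda_{i}}(K[x,x^{-1}])$ and the lattice isomorphism of Proposition~\ref{M=<c^0>} to identify $\bar{I}$ with a direct sum of principal ideals $\langle f_{t}(x)\rangle$. As written, your argument establishes only the inclusion $I(H,S)+\sum_{t\in T}\langle f_{t}(c_{t})\rangle\subseteq I$, not the asserted equality.
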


For convenience, some times we will denote $I(H,S)$ by $gr(I)$ and call it
the graded part of the ideal $I$ described above.

\noindent The next result describes the prime ideals of Leavitt path
algebras.

\begin{theorem}
\label{Prime ideals} $($Theorem 3.12, \cite{R-1}$)$ An ideal $P$ of $%
L_{K}(E) $ with $P\cap E^{0}=H$ is a prime ideal if and only if $P$
satisfies one of the following properties:

\begin{enumerate}
\item $P=I(H,B_{H})$ and $E^{0}\backslash H$ is downward directed;

\item $P=I(H,B_{H}\backslash\{u\})$, $v\geq u$ for all $v\in E^{0}\backslash
H$ and the vertex $u^{\prime}$ that corresponds to $u$ in $E\backslash
(H,B_{H}\backslash\{u\})$ is a sink;

\item $P$ is a non-graded ideal of the form $P=I(H,B_{H})+<p(c)>$, where $c$
is a cycle without exits based at a vertex $u$ in $E\backslash(H,B_{H})$, $%
v\geq u$ for all $v\in E^{0}\backslash H$ and $p(x)$ is an irreducible
polynomial in $K[x,x^{-1}]$ such that $p(c)\in P$.
\end{enumerate}
\end{theorem}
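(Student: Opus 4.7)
The plan is to establish both directions using Theorem \ref{genators of ideal} together with the standard fact that a Leavitt path algebra $L_{K}(F)$ is prime if and only if $F^{0}$ is downward directed. For sufficiency, I would treat each case separately. In case (1), $L/P\cong L_{K}(E\setminus(H,B_{H}))$ has vertex set $E^{0}\setminus H$, which is downward directed by hypothesis. In case (2), the quotient graph has vertex set $(E^{0}\setminus H)\cup\{u^{\prime}\}$ with $u^{\prime}$ a sink, and the hypothesis $v\geq u$ for all $v\in E^{0}\setminus H$ guarantees downward directedness of the quotient graph. In case (3), $P$ contains $I(H,B_{H})$, and the ideal generated by the exit-free cycle $c$ based at $u$ modulo $I(H,B_{H})$ is Morita equivalent to $K[x,x^{-1}]$; under this equivalence, $\langle p(c)\rangle$ corresponds to the maximal ideal $\langle p(x)\rangle$ (as $p$ is irreducible), and combined with the hypothesis that every $v\in E^{0}\setminus H$ reaches $u$, this yields that $L/P$ is simple, in particular prime.

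For necessity, set $H=P\cap E^{0}$ and $S=\{v\in B_{H}:v^{H}\in P\}$, and invoke Theorem \ref{genators of ideal} to write $P=I(H,S)+\sum_{t\in T}\langle f_{t}(c_{t})\rangle$. A first application of primeness shows $E^{0}\setminus H$ is downward directed: for $u_{1},u_{2}\in E^{0}\setminus H$, neither $\langle u_{1}\rangle$ nor $\langle u_{2}\rangle$ lies in $P$, so their product is not in $P$ either, producing a common descendant outside $H$. If $P$ is graded then $L/P\cong L_{K}(E\setminus(H,S))$ is prime, so its graph is downward directed. Since each primed vertex $v^{\prime}$ arising from $v\in B_{H}\setminus S$ is a sink in this graph, two distinct primed sinks cannot share a descendant, forcing $|B_{H}\setminus S|\leq 1$. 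This yields case (1) when $S=B_{H}$, and case (2) when $B_{H}\setminus S=\{u\}$, where downward directedness then forces $v\geq u$ for every $v\in E^{0}\setminus H$.

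If $P$ is non-graded, $T$ is non-empty, and I would show $|T|=1$ and $S=B_{H}$ by a primeness argument: distinct exit-free cycles in $E\setminus(H,B_{H})$ have disjoint vertex sets, so the corresponding ideals in $L/I(H,B_{H})$ are mutually annihilating; two such contributions, or extra primed sinks from $B_{H}\setminus S$, would produce two nonzero orthogonal ideals lying outside $P$, contradicting primeness. Reduced to a unique cycle $c$ with base $u$, the image of $P$ in the corner ideal (Morita equivalent to $K[x,x^{-1}]$) is a nonzero prime ideal, hence generated by an irreducible polynomial $p(x)$, giving case (3); the strengthened condition $v\geq u$ for all $v\in E^{0}\setminus H$ follows from downward directedness together with the fact that $u$ is on a cycle. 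The main obstacle will be this non-graded case, where $T$, $S$, and the irreducibility of $p$ must be controlled simultaneously; the unifying technical tool throughout is the Morita equivalence between the ideal generated by an exit-free cycle and a matrix algebra over $K[x,x^{-1}]$, which localizes the prime-ideal analysis to the commutative setting.
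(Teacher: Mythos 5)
First, a point of reference: the paper does not prove this theorem at all --- it is quoted verbatim as Theorem 3.12 of \cite{R-1} --- so there is no internal proof to compare yours against. Judged on its own terms, your architecture is the right one: reduce everything to the external facts that $L_{K}(F)$ is a prime ring if and only if $F^{0}$ is downward directed, use Theorem \ref{genators of ideal} for the shape of a general ideal, and use the lattice isomorphism between the ideal generated by a no-exit cycle and $K[x,x^{-1}]$ (Proposition \ref{M=<c^0>}) to handle the non-graded part. Your necessity direction is essentially sound: the $u_{1}Lu_{2}\nsubseteq P$ argument for downward directedness, the ``two primed sinks cannot share a descendant'' argument forcing $|B_{H}\setminus S|\leq1$, and the reduction of the non-graded part to a nonzero prime ideal of $K[x,x^{-1}]$ all work (with the observation that the only descendants of a vertex on a no-exit cycle are the vertices of that cycle, which also gives $S=B_{H}$ and $|T|=1$).

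The genuine gap is in your sufficiency argument for case (3), where you assert that $\langle p(c)\rangle$ being maximal in $M=\langle\{c^{0}\}\rangle$ together with $v\geq u$ for all $v$ ``yields that $L/P$ is simple.'' This is false. The hypothesis $v\geq u$ for all $v\in E^{0}\setminus H$ does \emph{not} force the hereditary saturated closure of $\{c^{0}\}$ to be all of $(E\setminus(H,B_{H}))^{0}$, so $M$ can be a proper ideal of $\bar{L}=L/I(H,B_{H})$ and then $M/\bar{P}$ is a nonzero proper ideal of $L/P$. Concretely, take $E^{0}=\{v_{1},v_{2},u\}$ with edges $v_{1}\to v_{2}$, $v_{2}\to v_{1}$, $v_{1}\to u$, and a loop $c$ at $u$ (which has no exits): here $H=\emptyset$, every vertex reaches $u$, $P=\langle p(c)\rangle$ satisfies all the hypotheses of (3), but the saturated closure of $\{u\}$ is $\{u\}$, so $M=\langle u\rangle\neq L$ and $L/P$ is not simple (it surjects onto $L/M\cong M_{2}(K[x,x^{-1}])$). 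The conclusion you actually need --- primeness --- is still true, but it must be obtained by the ideal-lattice argument rather than via simplicity: by Lemma \ref{Ideal in downward dir graph with NE cycle}, every nonzero ideal of $\bar{L}$ either contains $M$ or equals $\langle f(c)\rangle\subseteq M$; if $\bar{A}\bar{B}\subseteq\bar{P}=\langle p(c)\rangle$ and both $\bar{A},\bar{B}$ contained $M$ one would get $M=M^{2}\subseteq\langle p(c)\rangle$, a contradiction, and if one of them is $\langle f(c)\rangle$ the problem transfers through Proposition \ref{M=<c^0>} to the primeness of $\langle p(x)\rangle$ in $K[x,x^{-1}]$. (This is exactly the argument the paper runs, with $p^{n}$ in place of $p$, in the proof of Theorem \ref{strongly irred}.) With that substitution your proof closes; as written, the simplicity claim is an error, not just an omission.
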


\noindent We shall also be using the following two results.

\begin{lemma}
\label{Irreducible Ideals of L} $($Theorem 5.7, \cite{R-2}$)$ If an ideal $I$
of $L_{K}(E)$ is irreducible, then $I=P^{n}$, a power of a prime ideal $P$
for some $n\geq1$. Also $gr(I)=gr(P)$ is a prime ideal.
\end{lemma}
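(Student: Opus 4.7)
The plan is to combine the two structure theorems already in hand. Apply Theorem \ref{genators of ideal} to write
\[
I = I(H,S) + \sum_{t \in T} \langle f_t(c_t) \rangle
\]
with $H = I \cap E^0$, $S = \{v \in B_H : v^H \in I\}$, each $c_t$ a cycle without exits in the quotient graph $E \setminus (H,S)$, and each $f_t(x) \in K[x]$ having nonzero constant term; write $gr(I) = I(H,S)$. The goal is to push the irreducibility hypothesis on $I$ against this decomposition to force (i) $|T| \le 1$, (ii) if $T = \{t\}$ then $f_t = p^n$ for a single irreducible polynomial $p \in K[x]$, and (iii) $gr(I)$ is a prime ideal. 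Once these are in place, $P := gr(I) + \langle p(c) \rangle$ will be prime by Theorem \ref{Prime ideals}(3), and one checks that $I = P^n$.

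First I would collapse the polynomial parts. Because a cycle without exits $c_t$ based at a vertex $u_t$ generates (together with $u_t$ and $c_t^*$) a subalgebra isomorphic to $K[x,x^{-1}]$, whenever $f_t = g h$ with $\gcd(g,h) = 1$ in $K[x]$, a Chinese-remainder-style computation modulo $gr(I)$ yields
\[
I = \Bigl( gr(I) + \langle g(c_t) \rangle + \sum_{s \ne t} \langle f_s(c_s) \rangle \Bigr) \cap \Bigl( gr(I) + \langle h(c_t) \rangle + \sum_{s \ne t} \langle f_s(c_s) \rangle \Bigr),
\]
and both factors properly contain $I$, contradicting irreducibility. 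Hence each $f_t$ is a prime power. A parallel splitting argument applied to two distinct cycles forces $|T| \le 1$.

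Next I would show $gr(I)$ is prime. By Theorem \ref{Prime ideals} it suffices to verify that $E^0 \setminus H$ is downward directed when $T = \emptyset$, or that every vertex of $E^0 \setminus H$ reaches the base vertex $u$ of the unique remaining cycle $c$ when $T = \{t\}$. If some $v \in E^0 \setminus H$ failed to satisfy the appropriate condition, one could split $I$ as the intersection of $\langle I, v \rangle$ with an ideal built from the saturated closure omitting the hereditary tree descending from $v$; both ideals strictly enlarge $I$, again contradicting irreducibility. A similar argument controls the breaking-vertex set $S$, placing $gr(I)$ into case (1), (2), or (3) of Theorem \ref{Prime ideals}.

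Finally, set $P = gr(I) + \langle p(c) \rangle$ with $p$ the irreducible factor of $f_t$ and $n$ its multiplicity. Then $P$ is prime and $gr(P) = gr(I)$. Computing $P^n$ using that inside the local subring $\cong K[x,x^{-1}]$ associated to the cycle $c$ one has ordinary polynomial arithmetic modulo $gr(I)$, one obtains $P^n = gr(I) + \langle p(c)^n \rangle = I$. I expect the main obstacle to be the bookkeeping surrounding the admissible pair $(H,S)$ and its breaking vertices during the intersection decompositions, together with making the CRT-style equality rigorous in the non-commutative Leavitt path algebra setting, which requires careful use of the commutativity of the subring cut out by a cycle without exits.
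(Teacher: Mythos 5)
The paper does not actually prove this lemma: it is imported verbatim as Theorem 5.7 of \cite{R-2}, so there is no internal argument to compare against. Your proposal is a reasonable reconstruction of how that cited result is proved, and its architecture --- reduce to the generator description of Theorem \ref{genators of ideal}, then use intersection-splittings to force the polynomial part to be a single prime power supported on a single cycle and the graded part to be prime, and finally compute $P^{n}=gr(I)+\langle p^{n}(c)\rangle$ --- is sound. Two points are under-justified as written. First, every splitting of the form $(X+A)\cap(X+B)=X+(A\cap B)$ silently uses the distributivity of the ideal lattice of $L_{K}(E)$ (Theorem 4.3 of \cite{R-2}, which this paper invokes elsewhere); in a noncommutative ring these identities are not automatic, and distributivity, rather than any ``commutativity of the subring cut out by a cycle,'' is the real engine of the argument, so it should be cited explicitly. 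Second, the step showing $gr(I)$ is prime is the least developed: to see that $\langle I,v\rangle$ and your second ideal meet exactly in $I$ you must check that the relevant hereditary \emph{saturated} closures intersect only inside $H$ (e.g.\ that the saturated closures of $T(v_{1})$ and $T(v_{2})$ are disjoint in the quotient graph when $v_{1},v_{2}$ have no common descendant, which follows because every vertex in the saturated closure of a hereditary set $X$ reaches $X$), and you must separately dispose of the breaking-vertex configuration $|B_{H}\setminus S|\geq2$ by writing $I(H,S)$ as the proper intersection $I(H,S\cup\{u\})\cap I(H,S\cup\{w\})$. These gaps are fillable, and with them supplied the outline does yield $I=P^{n}$ with $gr(I)=gr(P)$ prime; but in this paper the clean course is simply to cite \cite{R-2} as the authors do.
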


\begin{lemma}
\label{Property of graded ideal} $($Lemma 3.1, \cite{R-2}$)$ Let $A$ be a
graded ideal of $\ L=L_{K}(E)$.

(a) For any ideal $B$ of $L$, $AB=A\cap B$; In particular, $A^{2}=A$;

(b) $A=I_{1}\ldots I_{n}$ is a product of ideals if and only if $%
A=I_{1}\cap\ldots\cap I_{n}$ is their intersection.
\end{lemma}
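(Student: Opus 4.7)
The plan is to prove (a) first and then deduce both directions of (b) by applying (a).

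For (a), the inclusion $AB \subseteq A \cap B$ is immediate since $AB \subseteq A$ and $AB \subseteq B$. For the reverse inclusion, the strategy is to show that $A$ is \emph{left pure}: for every $x \in A$ there exists an idempotent $a \in A$ with $ax = x$. Once this is available, any $x \in A \cap B$ can be rewritten as $x = a \cdot x$ with $a \in A$ and $x \in B$, placing $x$ in $AB$. Left-purity should hold because $A = I(H,S)$ is generated as a two-sided ideal by the pairwise orthogonal idempotents $H \cup \{v^H : v \in S\}$, and for any standard monomial $\alpha\beta^*$ lying in $A$ one has $r(\alpha) = r(\beta) \in H$, so the idempotent $\alpha\alpha^*$ already lies in $A$ and satisfies $(\alpha\alpha^*)(\alpha\beta^*) = \alpha\beta^*$. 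For a given $x \in A$ I would assemble a single $a \in A$ by combining such $\alpha\alpha^*$ factors (together with analogous idempotents accounting for the generators $v^H$) to obtain $ax = x$. The identity $A^2 = A$ is then the specialization $B = A$.

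For (b), the ($\Leftarrow$) direction is immediate from (a): if $A = I_1 \cap \cdots \cap I_n$, then $A \subseteq I_j$ for every $j$, so iterating part (a) gives
\[
A = A^n \subseteq I_1 I_2 \cdots I_n \subseteq I_1 \cap \cdots \cap I_n = A,
\]
forcing equality throughout. For the ($\Rightarrow$) direction, assume $A = I_1 \cdots I_n$ and set $J = I_1 \cap \cdots \cap I_n$; then $A \subseteq J$ trivially and $J^n \subseteq I_1 I_2 \cdots I_n = A$. Since $A$ is graded, the quotient $\bar L = L/A \cong L_K(E \setminus (H,S))$ is again a Leavitt path algebra, and so has zero prime radical, i.e., is semiprime. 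The image $\bar J$ satisfies $\bar J^n = 0$, and semiprimeness forces $\bar J = 0$, giving $J \subseteq A$ and hence $A = J$.

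The main obstacle will be the left-purity claim used in (a). The obvious left local unit for $x \in A$ provided by the ambient local units of $L$ is a sum of start vertices $s(\alpha_i)$, which in general fail to lie in $H$ and hence in $A$. Replacing $s(\alpha_i)$ by $\alpha_i \alpha_i^*$ repairs the membership issue but introduces cross terms $\alpha_i^* \alpha_j$ with $i \neq j$ that are nonzero whenever one of $\alpha_i, \alpha_j$ is a prefix of the other; these have to be controlled, for instance by grouping monomials sharing a common initial segment or by refining to a maximal antichain of initial subpaths. The breaking-vertex generators $v^H$ with $v \in S$ will require an analogous treatment, using the CK-2 relation at $v$ to express the relevant idempotents.
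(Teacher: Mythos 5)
The paper does not actually prove this lemma --- it is quoted verbatim as Lemma 3.1 of \cite{R-2} --- so there is no internal proof to compare against; but your outline is essentially the standard argument for that cited result, and it is correct. The only step you leave genuinely open is the ``left purity'' of $A$, i.e.\ that every $x\in A$ admits an idempotent $a\in A$ with $ax=x$. You do not need the bare-hands construction you sketch: a graded ideal $I(H,S)$ is isomorphic, as a ring, to the Leavitt path algebra of a suitable (hedgehog) graph by Theorem 2.5.22 of \cite{AAS}, and hence is a ring with local units; this is exactly the fact the present paper invokes in its proof of Theorem \ref{Type I LPA} (``the graded ideal $I$ is isomorphic to a Leavitt path algebra and hence has local units''). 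Citing that theorem closes the gap immediately and gives a two-sided local unit, which is all that (a) requires. Your direct approach can also be completed --- taking the minimal elements of $\{\alpha_i\}$ under the prefix order yields an antichain, the corresponding $\gamma\gamma^{\ast}$ are pairwise orthogonal idempotents in $A$ whose sum left-fixes every monomial, and the $v^{H}$-generators are handled the same way with $\gamma v^{H}\gamma^{\ast}$ --- but it is more work than the situation demands.

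Part (b) as you argue it is correct and slightly different in flavor from what one usually sees: the direction ($\Leftarrow$) via $A=A^{n}\subseteq I_{1}\cdots I_{n}\subseteq I_{1}\cap\cdots\cap I_{n}=A$ is the standard consequence of (a), and for ($\Rightarrow$) your use of semiprimeness of $L/A\cong L_{K}(E\backslash(H,S))$ (zero Baer radical, as recorded in Section 2 of the paper) to kill the nilpotent image of $J=I_{1}\cap\cdots\cap I_{n}$ is a clean and valid route. The one point worth making explicit is that (a) cannot be applied directly to the factors $I_{j}$ in ($\Rightarrow$), since they need not be graded; your semiprimeness argument correctly sidesteps this.
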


For a ring $R$, and an infinite set $\Lambda$, we will denote by $M_{\Lambda
}(R)$, the ring of $\Lambda\times\Lambda$ matrices in which all except at
most finitely many entries are non-zero.

For more details on results in Leavitt path algebras, we refer the reader to 
\cite{AAS} and \cite{R}.

\bigskip

\section{Strongly Irreducible Ideals of Leavitt Path Algebras}

\bigskip

\noindent In this section we describe the strongly irreducible ideals of
Leavitt path algebras. We give necessary and sufficient conditions under
which a proper ideal $I$ of a Leavitt path algebra admits a factorization as
product as well as an intertsection of finitely many strongly irreducible
ideals. Interestingly the graded part $gr(I)$ of this ideal $I$ also admits
such a factorization and in this case $I/gr(I)$ is finitely generated. We
characterize the Leavitt path algebras in which every proper ideal can be
factored as an irredundant intersection/product of finitely many strongly
irreducible ideals. Two uniqueness theorems are established showing that
such factorizations are unique except for the order of the factors. This
answers an open question of Heinzer and Olberding (\cite{HO}) in the context
of Leavitt path algebras. \ We also describe when every ideal of $L$ is
strongly irreducible. As a biproduct, Leavitt path algebras which are
Laskerian are described.

\begin{definition}
An ideal $I$ of a ring $R$ is said to be irreducible if, for any two ideals $%
A,B$ of $R$, $A\cap B=I$ implies that $A=I$ or $B=I$.
\end{definition}

\begin{definition}
An ideal $I$ of a ring $R$ is said to be a strongly irreducible ideal if,
for any two ideals $A,B$ of $R$, $A\cap B\subseteq I$ implies that $%
A\subseteq I$ or $B\subseteq I$.
\end{definition}

\noindent Clearly a prime ideal of a ring is strongly irreducible and a
strongly irreducible ideal is always irreducible. In general, an irreducible
ideal need not be strongly irreducible (see for e.g. \cite{S}, where it is
shown that in the polynomial ring $\mathbb{Q}[x,y]$, the ideal $<x,y^{2}>$
is irreducible, but not strongly irreducible) and as we have noted earlier
the ideal $8\mathbb{Z}$ is a strongly irreducible ideal but not a prime
ideal in the ring $\mathbb{Z}$ of integers. Irreducible ideals of Leavitt
path algebras are described in \cite{R-2}.

We first list some elementary (perhaps known) properties of strongly
irreducible ideals of any ring.

\medskip

(i) An ideal $I$ of a ring $R$ is strongly irreducible if, for all $a,b\in R$%
, $(aR\cap bR)\subseteq I$ (similarly, $(Ra\cap Rb)\subseteq I$) implies
that $a\in I$ or $b\in I$.

\medskip

(ii) If $I$ is a strongly irreducible ideal in $R$, then for any ideal $%
K\subseteq I$, $I/K$ is strongly irreducible in $R/K$.

\medskip

\noindent Proof of (i): Suppose $A\cap B\subseteq I$ for some ideals of $R$
and $A\nsubseteq I$. Choose $a\in A$ with $a\notin I$. Then for any $b\in B$%
, $(aR\cap bR)\subseteq A\cap B\subseteq I$ and so $b\in I$. Hence $%
B\subseteq I$.

\medskip

\noindent Proof of (ii) is straightforward.

\bigskip

\noindent Next, we list some useful results on ideals of Leavitt path
algebras over graphs containing a cycle without exits.

\begin{proposition}
\label{M=<c^0>} Suppose $c$ is a cycle without exits in a graph $E$.

$($i$)$ $\ \ ($\cite{AAS}, Theorem 2.7.1$)$ If $M$ is the ideal of $L$
generated by $c^{0}$, then $M\cong M_{\Lambda}(K[x,x^{-1}])$ for a suitable
index set $\Lambda$.

(ii) \ $($\cite{R-2}, Lemma 3.3$)$ $<f(c)><g(c)>=<f(c)g(c)>$ for any two $%
f(x),g(x)\in K[x]$. In particular, $<f(c)>^{n}=<f^{n}(c)>$ for any positive
integer $n$.

$($iii$)$ $($\cite{EEKRR}, Proposition 1$)$ The map $A\longmapsto M_{\Lambda
}(A)$ defines a lattice isomorphism between the lattices of ideals of $%
K[x,x^{-1}]$ and $M_{\Lambda}(K[x,x^{-1}])$. Moreover, $M_{\Lambda
}(AB)=M_{\Lambda}(A)M_{\Lambda}(B)$ for any two ideals \ $A,B$ of $%
K[x,x^{-1}]$.
\end{proposition}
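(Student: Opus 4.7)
The plan is to establish the three parts in order, with (ii) following from (i) together with (iii). For (i), I would set $v = c^{0}$ and first observe that, because $c$ has no exits, the corner $vLv$ is isomorphic to $K[x,x^{-1}]$ via $c \mapsto x$ and $c^{*} \mapsto x^{-1}$; indeed, the CK-1 relations combined with the no-exit condition force $cc^{*} = v$, so $c$ becomes invertible in $vLv$ with unique normal form as a Laurent polynomial in $c$. I would then index $\Lambda$ by a suitable set of paths $p$ ending at $v$ chosen so that $\{pp^{*} : p \in \Lambda\}$ is a family of pairwise orthogonal idempotents; concretely, one excludes those paths ending with a full traversal of the cycle, so that every path terminating at $v$ is uniquely of the form $pc^{n}$ with $p \in \Lambda$ and $n \geq 0$. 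The resulting Peirce-style decomposition $M = \bigoplus_{p,q \in \Lambda} p(vLv)q^{*}$ lets me define $\phi : M_{\Lambda}(K[x,x^{-1}]) \to M$ on matrix units by $x^{n} E_{p,q} \mapsto p c^{n} q^{*}$ (with negative powers handled using $c^{*}$); multiplicativity comes from the CK-1 relation $q^{*} p' = \delta_{q,p'} v$, and bijectivity from the decomposition.

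For (iii), the inverse to $A \mapsto M_{\Lambda}(A)$ sends an ideal $J \trianglelefteq M_{\Lambda}(K[x,x^{-1}])$ to the set $A_{J} = \{r \in K[x,x^{-1}] : r E_{p_{0},p_{0}} \in J\}$ for any fixed $p_{0} \in \Lambda$; conjugation by matrix units $E_{p,p_{0}}$ shows $A_{J}$ is independent of $p_{0}$, and the identity $r E_{p,q} = E_{p,p_{0}} (r E_{p_{0},p_{0}}) E_{p_{0},q}$ yields $M_{\Lambda}(A_{J}) = J$. For the multiplicative formula $M_{\Lambda}(A) M_{\Lambda}(B) = M_{\Lambda}(AB)$, the inclusion $\subseteq$ follows entry-wise from $(MN)_{ik} = \sum_{j} M_{ij} N_{jk} \in AB$, while $\supseteq$ follows by realizing any $ab$ with $a \in A$, $b \in B$ as the $(i,k)$-entry of a suitable product via $ab \cdot E_{i,k} = (a E_{i,k})(b E_{k,k})$.

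For (ii), the element $f(c)$ lies in $vLv$ and corresponds under $\phi^{-1}$ to $f(x) E_{v,v}$, where $v$ (the trivial path at $v$) is taken as an element of $\Lambda$. Since $f(c) = v f(c) v$, the two-sided ideal $\langle f(c) \rangle$ of $L$ coincides with $L f(c) L = (Lv) f(c) (vL)$, which lies inside $M$ because $Lv \subseteq M$. Under $\phi^{-1}$ this maps to the two-sided ideal of $M_{\Lambda}(K[x,x^{-1}])$ generated by $f(x) E_{v,v}$, which by (iii) equals $M_{\Lambda}((f(x)))$. Applying the same to $g$ and then the product formula of (iii) gives $\langle f(c) \rangle \cdot \langle g(c) \rangle = \phi(M_{\Lambda}((f)(g))) = \phi(M_{\Lambda}((fg))) = \langle f(c) g(c) \rangle$, and the claim about powers follows by iteration.

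The main obstacle I anticipate is the careful parameterization of $\Lambda$ in part (i): when $E$ has infinite emitters or infinitely many distinct paths ending at $v$, one must verify that the chosen representatives genuinely yield a direct-sum decomposition of $M$ and that $\phi$ is bijective. Once this Peirce decomposition is in hand, the remaining arguments in (ii) and (iii) reduce to routine matrix-ring bookkeeping.
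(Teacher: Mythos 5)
Your proposal is essentially correct; the paper itself offers no proof of this proposition (all three parts are quoted from \cite{AAS}, \cite{R-2} and \cite{EEKRR}), and your argument reconstructs the standard proofs given in those sources: the corner isomorphism $vLv\cong K[x,x^{-1}]$ and the Peirce decomposition over paths ending at $v$ that omit the full cycle for (i), matrix-unit conjugation for the lattice isomorphism and product formula in (iii), and the reduction of $\langle f(c)\rangle$ to $M_{\Lambda}(\langle f(x)\rangle)$ for (ii). One small correction: the identity $cc^{\ast}=v$ is not a consequence of the CK-1 relations but of CK-2 -- since $c$ has no exits, each vertex on $c$ emits exactly one edge $e$, so CK-2 gives $s(e)=ee^{\ast}$, and telescoping around the cycle yields $cc^{\ast}=v$ (CK-1 only gives $c^{\ast}c=v$); both identities are needed to make $c$ invertible in $vLv$.
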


\begin{lemma}
\label{Ideal in downward dir graph with NE cycle} Suppose $E$ is a downward
directed graph containing a cycle $c$ without exits based at a vertex $v$.
If $M$ is the ideal generated by $c^{0}$, then for every non-zero ideal $A$
of $L_{K}(E)$ either $M\subseteq A$ or $A=<f(c)>\subseteq M$ where $f(x)\in
K[x]$ with a non-zero constant term according as $A$ contains a vertex or
not.
\end{lemma}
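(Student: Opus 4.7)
The plan is to split on whether the non-zero ideal $A$ meets $E^{0}$. The first step in both cases is the structural observation that, for every vertex $u\in E^{0}$, one has $u\geq v$. Indeed, downward directedness yields $w\in E^{0}$ with $u\geq w$ and $v\geq w$; since $c$ has no exits, the only vertices below $v$ lie on $c$, so $w$ is a vertex of $c$; traversing $c$ from $w$ gives $w\geq v$, whence $u\geq w\geq v$. This single observation drives both halves of the argument.

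If $A$ contains a vertex $u$, I would then choose a path $p$ from $u$ to $v$ and note that $v=p^{\ast}up\in A$, since $A$ is a two-sided ideal. Therefore $M=\langle v\rangle\subseteq A$, establishing the first alternative.

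If instead $A\cap E^{0}=\emptyset$, then since $B_{\emptyset}=\emptyset$, Theorem \ref{genators of ideal} gives $A=\sum_{t\in T}\langle f_{t}(c_{t})\rangle$, where each $c_{t}$ is a cycle without exits in $E$ and $f_{t}(x)\in K[x]$ has non-zero constant term. I would next show that $c$ is, up to choice of base vertex, the only cycle without exits in $E$: if $c'$ is another such cycle based at $v'$, downward directedness furnishes a common lower bound $w$ of $v$ and $v'$, and the no-exit property forces $w$ onto both $c$ and $c'$; since each vertex on a no-exit cycle has a unique outgoing edge, the two cycles share every successive edge and therefore coincide. Consequently every $c_{t}^{0}$ lies on $c$; as in the first case this gives $c_{t}^{0}\in M$, hence $\langle f_{t}(c_{t})\rangle\subseteq M$, and so $A\subseteq M$.

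The final and most delicate step is to recognise $A$ as principal of the stated form. I would invoke the isomorphism $M\cong M_{\Lambda}(K[x,x^{-1}])$ of Proposition \ref{M=<c^0>}(i). Because $M$ has local units, any two-sided ideal of $L$ contained in $M$ coincides with a ring-theoretic ideal of $M$; by Proposition \ref{M=<c^0>}(iii) these correspond bijectively to the ideals of the PID $K[x,x^{-1}]$. Hence $A$ corresponds to some principal ideal $\langle h(x)\rangle$, and absorbing a suitable power of the unit $x$ I may replace $h(x)$ by a polynomial $f(x)\in K[x]$ with non-zero constant term generating the same ideal, so that the correspondence returns $A=\langle f(c)\rangle$. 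The main obstacle I expect is precisely this transfer: one must combine the lattice isomorphism of Proposition \ref{M=<c^0>}, the local-unit structure of $M$, and the PID property of $K[x,x^{-1}]$ to pin down the canonical form $f(x)\in K[x]$ with non-zero constant term.
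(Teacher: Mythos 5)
Your proposal is correct, and its skeleton matches the paper's: the same preliminary observation that $u\geq v$ for every vertex $u$ (proved identically via downward directedness and the no-exit property), followed by the same case split on whether $A$ meets $E^{0}$. The first case is the paper's argument made explicit ($v=p^{\ast}up$ is just the concrete form of "$A\cap E^{0}$ is hereditary"). The real divergence is in the vertex-free case: the paper disposes of it in one line by citing Lemma 3.5 of \cite{R-1}, which already asserts that a non-zero ideal of $L_K(E)$ containing no vertices, with $E$ downward directed, equals $<f(c)>$ for a cycle $c$ without exits and $f(x)\in K[x]$ with non-zero constant term. You instead reprove this from scratch: Theorem \ref{genators of ideal} with $H=\emptyset$, $B_{\emptyset}=\emptyset$ gives $A=\sum_{t}<f_{t}(c_{t})>$, uniqueness of the no-exit cycle collapses all $c_{t}$ onto $c$, and the lattice isomorphism of Proposition \ref{M=<c^0>} turns the sum into a single principal ideal via the PID $K[x,x^{-1}]$. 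This buys self-containedness at the cost of two small details you should make explicit: (i) the $c_{t}$ are rotations of $c$ based at possibly different vertices, so you need $<f_{t}(c_{t})>=<f_{t}(c)>$, which follows from $pf_{t}(c_{t})p^{\ast}=f_{t}(c)$ where $p$ is the initial segment of $c$ ending at the base of $c_{t}$ (using $pp^{\ast}=v$ by CK-2, since every vertex on a no-exit cycle emits a single edge); and (ii) the transfer through $M\cong M_{\Lambda}(K[x,x^{-1}])$ should be anchored by noting that $<f(c)>$ corresponds to $<f(x)>$ under the isomorphism (via $vL_K(E)v\cong K[x,x^{-1}]$, $c\mapsto x$), so that the sum $\sum_t<f_t(x)>=<\gcd(f_t)>$ pulls back to $<f(c)>$ with $f=\gcd(f_t)$, normalized to lie in $K[x]$ with non-zero constant term. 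Neither point is a genuine gap; both are routine and the argument goes through.
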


\begin{proof}
First observe that $u\geq v$ for every vertex $u\in E$. This is because, by
downward directness of $E$, corresponding to $u,v$, there is a vertex $w$
such that $u\geq w$ and $v\geq w$. Since $v$ sits on the cycle $c$ without
exits, $w$ must be a vertex on $c$ and so $w\geq v$. Thus, $u\geq v$. So if
an ideal $A$ of $L_{K}(E)$ contains a vertex $u$, then since $A\cap E^{0}$
is hereditary, $A$ contains $v$ and hence $c^{0}$. Consequently, $%
M=<c^{0}>\subseteq A$. Suppose the non-zero ideal $A$ does not contain any
vertices. Since $E$ is downward directed, we appeal to Lemma 3.5 of \cite%
{R-1} to conclude that $A=<f(c)>$ where $f(x)\in K[x]$. In this case,
clearly $A\subseteq M$.
\end{proof}

The next theorem describes the strongly irreducible ideals of a Leavitt path
algebra $L_{K}(E)$ and shows that in the case of Leavitt path algebras, the
notions of irreducible ideals and strongly irreducible ideals coincide.

\begin{theorem}
\label{Almost prime in an LPA} \label{strongly irred} The following
properties are equivalent for an ideal $I$ of a Leavitt path algebra $%
L:=L_{K}(E)$;

\begin{enumerate}
\item $I$ is a strongly irreducible ideal of $L$;

\item $I$ is an irreducible ideal of $L$;

\item $I=P^{n}$, a power of a prime ideal $P$.
\end{enumerate}
\end{theorem}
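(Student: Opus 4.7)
The implication $(1) \Rightarrow (2)$ is immediate from definitions: if $A \cap B = I$ then in particular $A \cap B \subseteq I$, so strong irreducibility gives $A \subseteq I$ or $B \subseteq I$, which combined with the reverse inclusions $I = A \cap B \subseteq A, B$ yields $A = I$ or $B = I$. The implication $(2) \Rightarrow (3)$ is exactly Lemma~\ref{Irreducible Ideals of L}. So the substance of the theorem lies in $(3) \Rightarrow (1)$: the plan is to show that $P^n$ is strongly irreducible for every prime ideal $P$ of $L$ and every integer $n \geq 1$.

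Splitting along the trichotomy of Theorem~\ref{Prime ideals}, the graded case is short. If $P$ is of type (1) or (2) (hence graded), then Lemma~\ref{Property of graded ideal}(a) applied to $P$ yields $P^2 = P \cap P = P$, so $P^n = P$; and every prime ideal is strongly irreducible, since $A \cap B \subseteq P$ gives $AB \subseteq A \cap B \subseteq P$, so primeness of $P$ forces $A \subseteq P$ or $B \subseteq P$. The real work lies in the non-graded case: $P = J + \langle p(c) \rangle$ of type (3), where $J := gr(P) = I(H, B_H)$ is graded, $c$ is a cycle without exits in $E' := E \setminus (H, B_H)$, and $p$ is irreducible in $K[x, x^{-1}]$.

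Here I would first establish $P^n = J + \langle p(c)^n \rangle$: from Lemma~\ref{Property of graded ideal}(a) we have $J^2 = J$, whence $J = J^n \subseteq P^n$, and expanding $(J + \langle p(c)\rangle)^n$ by multilinearity absorbs every summand containing a $J$-factor into $J$, leaving $\langle p(c)\rangle^n = \langle p(c)^n \rangle$ (by Proposition~\ref{M=<c^0>}(ii)) as the only surviving non-graded contribution. Then I would pass to the quotient $\bar{L} := L/J \cong L_K(E')$, in which $E'$ is downward directed with unique cycle $\bar{c}$ without exits, so Lemma~\ref{Ideal in downward dir graph with NE cycle} gives a clean dichotomy: every non-zero ideal of $\bar{L}$ either contains $M := \langle \bar{c}^{0} \rangle$ (precisely when it contains a vertex) or has the form $\langle h(\bar{c}) \rangle \subsetneq M$. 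Since $\bar{P}^n = \langle p(\bar{c})^n \rangle \subsetneq M$, the case where both $\bar{A}$ and $\bar{B}$ contain $M$ would force $M \subseteq \bar{A} \cap \bar{B} \subseteq \bar{P}^n$, a contradiction; in the remaining configurations Proposition~\ref{M=<c^0>}(iii) converts the question into the divisibility assertion $p^n \mid \lcm(f, g)$ in the PID $K[x, x^{-1}]$. Irreducibility of $p$ then yields $p^n \mid f$ or $p^n \mid g$, i.e.\ $\bar{A} \subseteq \bar{P}^n$ or $\bar{B} \subseteq \bar{P}^n$, establishing strong irreducibility of $\bar{P}^n$ in $\bar{L}$.

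The main obstacle is lifting this conclusion back to $L$: the hypothesis $A \cap B \subseteq P^n$ only directly yields $\overline{A \cap B} \subseteq \bar{P}^n$, which is weaker than the $\bar{A} \cap \bar{B} \subseteq \bar{P}^n$ demanded by the quotient argument, and these can differ because the ideal lattice of $L$ need not be distributive. My plan is to bridge the gap using two structural ingredients. First, $gr(P) = J$ is itself a prime ideal by Theorem~\ref{Prime ideals}(1) (the conditions of type (3) force $E^0 \setminus H$ to be downward directed), so $gr(A) \cap gr(B) = gr(A \cap B) \subseteq J$ yields, without loss of generality, $gr(A) \subseteq J$. Second, the canonical form from Theorem~\ref{genators of ideal} forces the only non-graded cycle components of $A$ that survive modulo $J$ to concentrate at $c$ itself---any surviving cycle in $\bar{L}$ with exits would generate an ideal containing a vertex, violating the assumption of the subcase at hand. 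Combining these with direct manipulation inside the commutative corner $c^0 L c^0$, whose image in $\bar{L}$ is the Laurent polynomial ring $K[\bar{c}, \bar{c}^{-1}]$, an element such as $f(c) g(c) \in AB \subseteq A \cap B$ produced from appropriate representatives of the generators of $\bar{A}$ and $\bar{B}$ delivers the required PID divisibility and closes the argument.
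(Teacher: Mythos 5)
Your reduction follows the paper's route almost step for step: (1)$\Rightarrow$(2) and (2)$\Rightarrow$(3) are handled identically, the graded case is dispatched via Lemma~\ref{Property of graded ideal}, and in the non-graded case you pass to $\bar L=L/I(H,B_H)$, compute $P^{n}=I(H,B_H)+\langle p^{n}(c)\rangle$, invoke the dichotomy of Lemma~\ref{Ideal in downward dir graph with NE cycle} for $\bar A$ and $\bar B$ relative to $M=\langle \{c^{0}\}\rangle$, and reduce to divisibility in $K[x,x^{-1}]$ via Proposition~\ref{M=<c^0>}. You have also correctly isolated the one delicate point: the hypothesis $A\cap B\subseteq P^{n}$ must be converted into $\bar A\cap\bar B\subseteq\bar P^{n}$, not merely $\overline{A\cap B}\subseteq\bar P^{n}$. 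But your resolution of that point fails, for two reasons. First, your premise is wrong: the ideal lattice of a Leavitt path algebra \emph{is} distributive (Theorem 4.3 of \cite{R-2}), and this is exactly the fact the paper's proof uses to get $(A+I(H,B_H))\cap(B+I(H,B_H))=(A\cap B)+I(H,B_H)$, hence $\bar A\cap\bar B=\overline{A\cap B}\subseteq\bar I$. Second, the substitute you propose does not close the gap: an element $f(c)g(c)$ lying in $AB\subseteq A\cap B\subseteq P^{n}$ only yields $p^{n}\mid fg$ in $K[x,x^{-1}]$, which does not imply $p^{n}\mid f$ or $p^{n}\mid g$ (take $f=g=p^{m}$ with $2m\geq n>m$); what the argument needs is $p^{n}\mid\lcm(f,g)$, i.e.\ control of the \emph{intersection} $\langle f(\bar c)\rangle\cap\langle g(\bar c)\rangle$, and a product of elements cannot detect that. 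Your two ``structural ingredients'' (primeness of $gr(P)$ forcing $gr(A)\subseteq gr(P)$, and the concentration of surviving cycle components at $c$) only control which branch of the dichotomy $\bar A$ and $\bar B$ fall into --- which the dichotomy lemma already gives you directly, since $\bar A,\bar B$ are ideals of $\bar L$ --- but they do not produce an element of $A\cap B$ mapping onto a generator of $\langle\lcm(f,g)(\bar c)\rangle$.

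The repair is simply to cite distributivity of the ideal lattice (Theorem 4.3 of \cite{R-2}) at the point where you pass to the quotient; with that in hand the rest of your outline is precisely the paper's proof and goes through verbatim.
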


\begin{proof}
Clearly (1) $\implies$ (2) and the implication (2) $\implies$ (3) is proved
in (\cite{R-2}, Theorem 5.7).

Assume (3), so that $I=P^{n}$ for some prime ideal $P$ and integer $n\geq 1$%
. If $P$ is graded, then by Lemma \ref{Property of graded ideal}, $I=P^{n}=P$
is a prime ideal and so is strongly irreducible. Suppose now that $P$ is a
non-graded ideal. Then, by Theorem \ref{Prime ideals}, we have $%
P=I(H,B_{H})+<p(c)>$ where $H=P\cap E^{0}$, $(E\backslash (H,B_{H}))^{0}$ is
downward directed, $c$ is a cycle without exits in $E\backslash (H,B_{H})$
and $p(x)$ is an irreducible polynomial in $K[x,x^{-1}]$. Then, using
Proposition \ref{M=<c^0>}(ii) and Lemma \ref{Property of graded ideal}, one
can show that $I=P^{n}=I(H,B_{H})+<p^{n}(c)>$. Suppose $A\cap B\subseteq I$
for some ideals $A,B$ in $L$. Let $\bar{L}=L/I(H,B_{H})\cong
L_{K}(E\backslash (H,B_{H}))$, $\bar{A}=(A+I(H,B_{H}))/I(H,B_{H})$, $\bar{B}%
=(B+I(H,B_{H}))/I(H,B_{H})$ and $\bar{I}=I/I(H,B_{H})$. Since the ideals of $%
L$ satisfy the distributive law (\cite{R-2}, Theorem 4.3), 
\begin{equation*}
(A+I(H,B_{H}))\cap (B+I(H,B_{H}))
\end{equation*}%
simplifies to $(A\cap B)+I(H,B_{H})$ and so 
\begin{equation*}
\bar{A}\cap \bar{B}=[(A\cap B)+I(H,B_{H})]/I(H,B_{H})\subseteq \bar{I}%
=<p^{n}(c)>.
\end{equation*}%
Let $M=<c^{0}>$, the ideal generated by $c^{0}$ in $\bar{L}$. By Lemma \ref%
{Ideal in downward dir graph with NE cycle}, each of $\bar{A},\bar{B}$
either contains $M$ or is contained in $M$. Now both $\bar{A}$ and $\bar{B}$
cannot contain $M$, since otherwise, $\bar{A}\cap \bar{B}\supseteq
M\varsupsetneq <p^{n}(c)>=\bar{I}$, a contradiction. If only one of them is
contained in $M$, say $\bar{A}\subseteq M$ and $\bar{B}\supseteq M$, then $%
\bar{A}=\bar{A}\cap \bar{B}\subseteq \bar{I}$ and this implies that $%
A\subseteq I$. Suppose both $\bar{A}\subseteq M$ and $\bar{B}\subseteq M$.
By Lemma \ref{Ideal in downward dir graph with NE cycle}, $\bar{A}=<f(c)>$
and $\bar{B}=<g(c)>$ where $f(x),g(x)\in K[x]$ with non-zero constant terms.
Now, by Proposition \ref{M=<c^0>}(i), $M\cong M_{\Lambda }(K[x,x^{-1}])$ for
a suitable index set $\Lambda $ and by Proposition \ref{M=<c^0>}(ii), the
ideal lattices of $M_{\Lambda }(K[x,x^{-1}])$ and the principal ideal domain 
$K[x,x^{-1}]$ are isomorphic. So, in $K[x,x^{-1}]$, 
\begin{equation*}
<f(x)>\cap <g(x)>\subseteq <p^{n}(x)>.
\end{equation*}%
Now $<f(x)>\cap <g(x)>=<h(x)>$, where $h(x)=\lcm(f(x),g(x))$.

Thus $p^{n}(x)| \lcm(f(x),g(x))$ and since $p^{n}(x)$ is a prime power, by
the uniqueness of prime power factorization in $K[x,x^{-1}]$, $p^{n}(x)|f(x)$
or $p^{n}(x)|g(x)$. This means either $<f(c)>\subseteq<p^{n}(c)>$ or $%
<g(c)>\subseteq<p^{n}(c)>$. We then conclude that either $A\subseteq I$ or $%
B\subseteq I$. This proves (1).
\end{proof}

Next, we give conditions under which a proper ideal $I$ of a Leavitt path
algebra $L_{K}(E)$ is a product as well as an intersection of finitely many
strongly irreducible ideals of $L_{K}(E)$.

\noindent We begin by proving a series of preparatory lemmas the first of
which is well-known.

\begin{lemma}
\label{Ideals in K[xx^-1]} Let $R$ be a Principal ideal domain. Then every
non-zero proper ideal $I$ of $R$ is an intersection of finitely many powers
of distinct prime ideals.
\end{lemma}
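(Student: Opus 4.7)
The plan is to appeal directly to the fact that a Euclidean domain is a principal ideal domain, and hence a unique factorization domain. Once the UFD property is in hand, the factorization of the ideal follows from the factorization of a generator into prime powers.

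Concretely, I would start by writing $I = (a)$ for some nonzero non-unit $a \in R$ (possible because $R$ is a PID and $I$ is a nonzero proper ideal). Using the UFD property, I would fix a factorization
\[
a = u\, p_1^{n_1} p_2^{n_2} \cdots p_k^{n_k},
\]
where $u$ is a unit, $p_1,\dots,p_k$ are pairwise non-associate primes in $R$, and each $n_i \geq 1$. Since $(u)=R$ and $(p_i^{n_i}) = (p_i)^{n_i}$, it then suffices to prove
\[
(a) \;=\; \bigcap_{i=1}^{k} (p_i)^{n_i},
\]
because each $(p_i)$ is a nonzero prime ideal in the PID $R$, and the primes $p_i$ are distinct up to units, so the ideals $(p_i)$ are distinct.

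For the displayed equality, the inclusion $\subseteq$ is clear since $p_i^{n_i} \mid a$ for each $i$. For the reverse inclusion, I would take $x$ in the intersection, so $p_i^{n_i} \mid x$ for every $i$. Because the $p_i$ are pairwise non-associate primes, $p_i^{n_i}$ and $p_j^{n_j}$ are coprime for $i\neq j$, hence $\operatorname{lcm}(p_1^{n_1},\dots,p_k^{n_k}) = p_1^{n_1}\cdots p_k^{n_k}$, which is an associate of $a$. By the standard property of least common multiples in a UFD (or equivalently by a simple induction using that coprime elements in a PID satisfy $(\alpha)\cap(\beta)=(\alpha\beta)$), it follows that $a \mid x$, that is, $x \in (a)$.

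I do not anticipate a substantive obstacle here; the only thing to be careful about is to record that distinct primes in $R$ yield distinct prime ideals (so that the ``distinct'' requirement in the statement is satisfied) and that the intersection is finite because the prime factorization of $a$ has only finitely many factors. This lemma will then be used in the sequel as the commutative/principal-ideal model for the non-graded part of an ideal of $L_K(E)$ associated with a cycle without exits, via the lattice isomorphism in Proposition~\ref{M=<c^0>}(iii).
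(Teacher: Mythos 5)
Your proposal is correct and follows essentially the same route as the paper: write $I=(a)$, factor $a$ into powers of pairwise non-associate primes, and identify the intersection $\bigcap_i (p_i)^{n_i}$ with the ideal generated by $\operatorname{lcm}(p_1^{n_1},\dots,p_k^{n_k})=p_1^{n_1}\cdots p_k^{n_k}$, which is associate to $a$. The only difference is that you spell out the reverse inclusion and the distinctness of the prime ideals a bit more explicitly than the paper does; the underlying argument is identical.
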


\begin{proof}
Let $I=<a>$ with $a$ ($\neq0$) being a non-unit. Let $a=p_{1}^{n_{1}}\cdots
p_{k}^{n_{k}}$ be the factorization of $a$ as a product of powers of
distinct prime (equivalently, irreducible) elements $p_{1},\ldots, p_{k}$ of 
$R$. Since $\gcd1(p_{1}^{n_{1}},\ldots, p_{k}^{n_{k}})=1$, $\lcm%
(p_{1}^{n_{1}},\ldots, p_{k}^{n_{k}})=p_{1}^{n_{1}}\cdots p_{k}^{n_{k}}$.
Consequently,%
\begin{equation*}
<p_{1}^{n_{1}}>\cap\cdots\cap<p_{k}^{n_{k}}>=<\lcm(p_{1}^{n_{1}},\ldots,
p_{k}^{n_{k}})>=<p_{1}^{n_{1}}\cdots p_{k}^{n_{k}}>=<a>\text{.}
\end{equation*}
\end{proof}

\begin{lemma}
\label{gr(I) prime=>Int. of Irred.} Suppose $I$ is a non-graded ideal of a
Leavitt path algebra $L_{K}(E)$ such that $gr(I)$ is a prime ideal. Then $I$
is an intersection of finitely many $($strongly$)$ irreducible $($= prime
power$)$ ideals.
\end{lemma}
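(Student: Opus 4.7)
The plan is to reduce $I$, via Theorem~\ref{genators of ideal}, to the form $gr(I)+\langle f(c)\rangle$ for a single cycle $c$ without exits and a polynomial $f(x)\in K[x]$ with non-zero constant term, then factor $f(x)$ in $K[x,x^{-1}]$ and transport the factorization through the matrix-ring picture supplied by Proposition~\ref{M=<c^0>}.

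First I would show that $gr(I)$ must fall under case~(1) of Theorem~\ref{Prime ideals}. Writing $I=I(H,S)+\sum_{t\in T}\langle f_{t}(c_{t})\rangle$ as in Theorem~\ref{genators of ideal}, the non-gradedness of $I$ forces $T\neq\emptyset$, so the quotient graph $\bar E=E\setminus(H,S)$ must contain a cycle without exits. Because $gr(I)=I(H,S)$ is prime, $L_{K}(\bar E)\cong L/gr(I)$ is a prime Leavitt path algebra, and hence $\bar E$ is downward directed. In case~(2) of Theorem~\ref{Prime ideals}, $\bar E$ would contain a sink $u'$ that every vertex must reach by downward directedness; but a cycle without exits traps us on the cycle and forbids reaching the sink. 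So case~(2) is impossible, and $gr(I)=I(H,B_{H})$ with $\bar E$ downward directed. Downward directedness of $\bar E$ also forces at most one cycle without exits: two such cycles would share a common descendant $w$ which, by absence of exits, would lie on both; the unique outgoing edge at $w$ would then belong to both cycles, forcing them to coincide. Consequently $T=\{t_{0}\}$ and
\[
I=gr(I)+\langle f(c)\rangle
\]
for a single cycle $c$ in $\bar E$ and some $f(x)\in K[x]$ with non-zero constant term.

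Next I would factor $f(x)=p_{1}(x)^{n_{1}}\cdots p_{k}(x)^{n_{k}}$ as a product of powers of distinct irreducibles in $K[x,x^{-1}]$. By Lemma~\ref{Ideals in K[xx^-1]} applied to the Euclidean domain $K[x,x^{-1}]$,
\[
\langle f(x)\rangle=\bigcap_{i=1}^{k}\langle p_{i}(x)^{n_{i}}\rangle.
\]
Using Proposition~\ref{M=<c^0>}(i) to identify $M:=\langle c^{0}\rangle$ inside $\bar L=L/gr(I)$ with $M_{\Lambda}(K[x,x^{-1}])$, and using the lattice isomorphism of Proposition~\ref{M=<c^0>}(iii), this intersection transports inside $\bar L$ to
\[
\langle f(c)\rangle=\bigcap_{i=1}^{k}\langle p_{i}(c)^{n_{i}}\rangle.
\]
Lifting back to $L$, adding $gr(I)$, and invoking the distributive law for ideals of $L_{K}(E)$ (Theorem~4.3 of \cite{R-2}, as already used in Theorem~\ref{Almost prime in an LPA}) gives
\[
I=\bigcap_{i=1}^{k}\bigl(gr(I)+\langle p_{i}(c)^{n_{i}}\rangle\bigr)=\bigcap_{i=1}^{k}Q_{i}^{n_{i}},
\]
where $Q_{i}=gr(I)+\langle p_{i}(c)\rangle$ and the last equality follows, exactly as in the proof of Theorem~\ref{Almost prime in an LPA}, from Proposition~\ref{M=<c^0>}(ii) combined with Lemma~\ref{Property of graded ideal}. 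Since $gr(I)=I(H,B_{H})$ with $\bar E$ downward directed and each $p_{i}(x)$ is irreducible in $K[x,x^{-1}]$, case~(3) of Theorem~\ref{Prime ideals} identifies each $Q_{i}$ as a (non-graded) prime ideal. Hence $I$ is an intersection of finitely many prime powers, which by Theorem~\ref{Almost prime in an LPA} are precisely the (strongly) irreducible ideals of $L$.

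The main obstacle I anticipate is the first step: cleanly ruling out case~(2) of the prime classification for $gr(I)$ and establishing the uniqueness of the cycle without exits in $\bar E$. Once $I$ has been normalized to $gr(I)+\langle f(c)\rangle$, the rest is routine bookkeeping through the lattice isomorphism of Proposition~\ref{M=<c^0>} and a single application of the distributive law.
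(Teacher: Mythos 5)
Your proposal is correct and follows essentially the same route as the paper: reduce $I$ to the form $I(H,B_{H})+<f(c)>$ for a single no-exit cycle $c$, factor $f$ into prime powers in $K[x,x^{-1}]$, transport the resulting intersection through the lattice isomorphism of Proposition~\ref{M=<c^0>}, and recognize each $I(H,B_{H})+<p_{i}^{n_{i}}(c)>$ as a prime power, hence strongly irreducible by Theorem~\ref{strongly irred}. You merely spell out a few steps the paper leaves terse (ruling out case~(2) of Theorem~\ref{Prime ideals}, uniqueness of the no-exit cycle, and the explicit use of distributivity when lifting the intersection back to $L$), all of which are sound.
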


\begin{proof}
By Theorem \ref{genators of ideal}, $I=I(H,S)+\Sigma_{t\in T}<f_{t}(c_{t})>$
where $T$ is a non-empty index set, for each $t\in T$, $c_{t}$ is a cycle
without exits in $E\backslash(H,S)$ and $f_{t}(x)\in K[x]$ with a non-zero
constant term. Since $gr(I)=I(H,S)$ is a prime ideal, by Theorem \ref{Prime
ideals}, $(E\backslash(H,S))^0$ is downward directed and so there can be
only one cycle, say $c$ without exits in $E\backslash(H,S)$. Hence we can
write $I=I(H,S)+<f(c)>$ where $c$ is then a unique cycle without exits in $%
E\backslash(H,S)$. From the description of the prime ideals in Theorem \ref%
{Prime ideals} and the fact that $I(H, S)$ is a prime ideal such that $%
E\backslash (H,S)$ has a cycle without exits, we conclude that $S=B_{H} $,
so we can write $I=I(H,B_{H})+<f(c)>$. In $\overline{L_{K}(E)}%
=L_{K}(E)/I(H,B_{H})$, $<f(c)>\subseteq M=<\{c^{0}\}>$ . Then, by
Proposition \ref{M=<c^0>} and Lemma \ref{Ideals in K[xx^-1]}, we conclude
that $<f(c)>=<p_{1}^{n_{1}}(c)>\cap \cdots\cap<p_{k}^{n_{k}}(c)> $ where $%
p_{1}(x),\ldots ,p_{k}(x)$ are distinct irreducible polynomials in $%
K[x,x^{-1}]$ and $f(x)=p_{1}^{n_{1}}(x)\cdots p_{k}^{n_{k}}(x)$ is a prime
power factorization of $f(x)$ in $K[x,x^{-1}]$. Then $I=P_{1}^{n_{1}}\cap%
\cdots\cap P_{k}^{n_{k}}$ where $P_{j}=I(H,B_{H})+<p_{j}(c)>$ is a prime
ideal for all $j=1,\ldots, k$. By Theorem \ref{Almost prime in an LPA}, each 
$P_{j}^{n_{j}}$ is a (strongly) irreducible ideal.
\end{proof}

The next technical lemma is obtained by modifying parts of the proof of
Theorem 6.2 in \cite{R-2} and is used in Theorem \ref{I intersection of
irreducibles}. Recall that given a collection of sets $\{A_{i}:i\in\mathcal{I%
}\}$, the intersection $\cap_{i\in\mathcal{I}}A_{i}$ is called irredundant,
if $\cap_{i\in\mathcal{I}\setminus\{j\}}A_{i}\nsubseteq A_{j}$ for any $j\in%
\mathcal{I}$. In particular, $A_{i}\nsubseteq A_{j}$ for any two $i\neq j\in%
\mathcal{I}$. Similarly the union $\cup_{i\in\mathcal{I}}A_{i}$ is called
irredundant if $A_{j}\nsubseteq\cup_{i\in\mathcal{I}\setminus\{j\}}A_{i}$
for any $j\in\mathcal{I}$.

\begin{lemma}
\label{Influence of gr(I0 factorization} Suppose $I=I(H,S)+\Sigma_{t\in
T}<f_{t}(c_{t})>$ is a non-graded ideal of $L$, where $T$ is a non-empty
index set, for each $t\in T$, $c_{t}$ is a cycle without exits in $%
E\backslash(H,S)$ based at a vertex $v_{t}$ with $c_{s}^{0}\cap
c_{t}^{0}=\emptyset$ for all $s,t\in T$ with $s\neq t$\ and $f_{t}(x)\in
K[x] $ with a non-zero constant term. \textit{Suppose further that }$%
I(H,S)=\cap_{j=1}^{m}P_{j}$\textit{\ is an irredundant intersection of }$m$ 
\textit{graded prime ideals }$P_{j}=I(H_{j},S_{j})$\textit{. Then} we can
take $T=\{1, \ldots, k\}$, $k\leq m$ and $I=I(H,S)+%
\Sigma_{t=1}^{k}<f_{t}(c_{t})>$. After any needed re-arrangement of indices,
we have,\textit{\ for each }$t\in T$\textit{, }$v_{t}\notin P_{t}$\textit{\
but }$v_{t}\in P_{j}$\textit{\ for all }$j=1,\ldots, m$ with $j\neq t$%
\textit{. Thus }$c_{t}\in E\backslash(H_{t},S_{t})$\textit{\ for all }$t\in
T $\textit{. }
\end{lemma}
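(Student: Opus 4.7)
The plan is to work in the quotient $\bar L = L/I(H,S) \cong L_K(\bar E)$ with $\bar E = E \setminus (H,S)$, where $\bar P_j = P_j/I(H,S)$ form an irredundant intersection of graded primes with $\bigcap_j \bar P_j = 0$, and each $c_t$ is a cycle without exits in $\bar E$. I will construct an injection $t \mapsto j(t)$ from $T$ into $\{1,\dots,m\}$, where $j(t)$ is the unique index with $v_t \notin H_{j(t)}$, and then relabel so that $j(t) = t$. The key geometric fact I will use repeatedly is that, since $c_t$ has no exits in $\bar E$, the vertices reachable from $v_t$ in $\bar E$ are exactly $c_t^0$, and in $E$ the reachable set lies in $c_t^0 \cup H$.

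Existence of $j(t)$ is immediate, since $v_t \notin H = \bigcap_j H_j$. For uniqueness, I first rule out the possibility that such a $P_j$ is of type (2) in Theorem \ref{Prime ideals}. Suppose $P_j = I(H_j, B_{H_j} \setminus \{u\})$ is of type (2) with $v_t \notin H_j$. Then $v_t \geq u$ combined with $u \notin H$ forces $u \in c_t^0$. The cycle-without-exits condition at $u$ in $\bar E$ forces $u$ to have exactly one edge with range outside $H$ (the cycle edge), so $u \in B_H$; and avoiding a primed exit at the cycle vertex preceding $u$ forces $u \in S$. Hence $u^H \in I(H,S) \subseteq P_j$. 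But $c_t^0 \cap H_j = \emptyset$ implies that the same cycle edge also has range outside $H_j$, so $u^H = u^{H_j}$ as elements of $L$. Since $u \in B_{H_j} \setminus S_j$, the standard Tomforde-type fact that graded ideals correspond bijectively to admissible pairs gives $u^{H_j} \notin P_j$, a contradiction.

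Hence every $P_j$ with $v_t \notin H_j$ is of type (1), i.e.\ $P_j = I(H_j, B_{H_j})$ with $E^0 \setminus H_j$ a maximal tail. I will then show $E^0 \setminus H_j = \{v \in E^0 : v \geq v_t\}$: heredity gives $\supseteq$, and downward directedness of $E^0 \setminus H_j$ combined with the reachability observation gives $\subseteq$ (any $v \in E^0 \setminus H_j$ shares a common lower bound $w \in E^0 \setminus H_j$ with $v_t$; since $w$ is reachable from $v_t$ and lies outside $H$, we have $w \in c_t^0$, whence $w \geq v_t$ and so $v \geq v_t$). This determines $H_j$, and therefore $P_j$, uniquely in terms of $t$, proving uniqueness of $j(t)$. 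Injectivity of $t \mapsto j(t)$ is analogous: if $v_{t_1}, v_{t_2}$ both lie outside the same $H_j$ with $t_1 \neq t_2$, the description above forces $v_{t_1} \geq v_{t_2}$, so $v_{t_2} \in c_{t_1}^0$, contradicting $c_{t_1}^0 \cap c_{t_2}^0 = \emptyset$. After relabeling so that $j(t) = t$, we obtain $|T| = k \leq m$, the form $I = I(H,S) + \sum_{t=1}^k \langle f_t(c_t)\rangle$, and assertion (2); the final claim $c_t \in E \setminus (H_t, S_t)$ follows from $v_t \notin H_t$ via heredity, which yields $c_t^0 \cap H_t = \emptyset$. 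The main obstacle is the type-(2) exclusion in the second step, which requires carefully matching up the breaking-vertex data of $H$ and of $H_j$ and locating $u$ in $S$ via the primed-edge structure, so that the identity $u^H = u^{H_j}$ can deliver the contradiction.
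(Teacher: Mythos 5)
Your argument is correct, and while it follows the same overall skeleton as the paper's proof (for each $t$ produce an index $j$ with $v_{t}\notin P_{j}$, show that index is unique, then show the assignment $t\mapsto j(t)$ is injective via disjointness of the $\{c_{t}^{0}\}$), the central uniqueness step is done by a genuinely different mechanism. The paper supposes two indices $j_{t}\neq i$ with $v_{t}\notin P_{j_{t}}$ and $v_{t}\notin P_{i}$, forms $P'=P_{j_{t}}\cap P_{i}$, and proves $P'=I(H',B_{H'})$ is itself a graded prime ideal (checking downward directedness of $E^{0}\setminus H'$ and doing a breaking-vertex computation to see $u^{H'}\in P'$); primeness of $P'$ then forces $P_{j_{t}}$ and $P_{i}$ to be comparable, contradicting irredundancy. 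You instead pin the ideal down completely: from the fact that $c_{t}$ has no exits in $E\setminus(H,S)$, so that every vertex of $E$ reachable from $v_{t}$ lies in $\{c_{t}^{0}\}\cup H$, you deduce $E^{0}\setminus H_{j}=\{v\in E^{0}:v\geq v_{t}\}$ and $S_{j}=B_{H_{j}}$, so two such indices would name equal ideals, again contradicting irredundancy. Your route avoids having to verify that an intersection of two graded primes is prime, and it has the additional merit of supplying a full justification for the type-(2) exclusion, which the paper dismisses as ``clear from the description of the graded prime ideals'': your computation locating $u$ on $\{c_{t}^{0}\}$, showing $u\in B_{H}\cap S$ and $u^{H}=u^{H_{j}}$, and then invoking the admissible-pair correspondence to get $u^{H_{j}}\notin I(H_{j},B_{H_{j}}\setminus\{u\})$, is exactly the missing detail. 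From that point on the two proofs conclude identically.
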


\begin{proof}
Clearly for each $t\in T$, there is a $j_{t}$ such that $v_{t}\notin
P_{jt}=I(H_{j_{t}},S_{j_{t}})$, since, otherwise, $v_{t}\in \cap
_{j=1}^{m}P_{j}=I(H,S)$, a contradiction. We claim that, $v_{t}\in P_{j}$
for all $j\neq j_{t}$, $j=1,\ldots ,m$. Suppose, on the contrary, $%
v_{t}\notin P_{i}=I(H_{i},S_{i})$ for some $i\neq j_{t}$. Since both $%
E\backslash (H_{j_{t}},S_{j_{t}})$ and $E\backslash (H_{i},S_{i})$ contain
the cycle $c_{t}$ without exits, it is clear from the description of the
graded prime ideals in Theorem \ref{Prime ideals} that $%
P_{j_{t}}=I(H_{j_{t}},B_{H_{j_{t}}})$ and $P_{i}=I(H_{i},B_{H_{i}})$. We
wish to show that $P^{\prime }=P_{j_{t}}\cap P_{i}$ is a (graded) prime
ideal. Let $P^{\prime }=I(H^{\prime },S^{\prime })$ so that $H^{\prime
}=P^{\prime }\cap E^{0}=H_{j_{t}}\cap H_{i}$. Now $c_{t}$ is a cycle without
exits in $(E^{0}\backslash H_{j_{t}})\cup (E^{0}\backslash
H_{i})=E^{0}\backslash (H_{j_{t}}\cap H_{i})=E^{0}\backslash H^{\prime }$.
Since both $E^{0}\backslash H_{j_{t}}=(E\backslash
(H_{j_{t}},B_{H_{j_{t}}}))^{0}$ and $E^{0}\backslash H_{i}=(E\backslash
(H_{i},B_{H_{i}}))^{0}$ are downward directed, $u\geq v_{t}$ for every
vertex $u$ in $E^{0}\backslash H_{j_{t}}\cup E^{0}\backslash H_{i}$. Hence $%
E^{0}\backslash H^{\prime }=E^{0}\backslash (H_{j_{t}}\cap H_{i})=$ $%
(E^{0}\backslash H_{j_{t}})\cup (E^{0}\backslash H_{i})$ is downward
directed. We claim that \ $P^{\prime }=I(H^{\prime },B_{H^{\prime }})$. To
see this, let $u\in B_{H^{\prime }}$. We need to show that $u^{H^{\prime
}}=u-\Sigma _{e\in s^{-1}(u),r(e)\notin H^{\prime }}ee^{\ast }\in P^{\prime
} $. Noting that $ee^{\ast }\in P_{j_{t}}$ if $r(e)\in H_{j_{t}}$, we have 
\begin{equation*}
u^{H^{\prime }}=u-\Sigma _{e\in s^{-1}(u),r(e)\notin H_{j_{t}}}ee^{\ast
}-\Sigma _{e\in s^{-1}(u),r(e)\notin H^{\prime },r(e)\in H_{j_{t}}}ee^{\ast }
\end{equation*}%
\begin{equation*}
=u^{H_{j_{t}}}-\Sigma _{e\in s^{-1}(u),r(e)\notin H^{\prime },r(e)\in
H_{j_{t}}}ee^{\ast }\in P_{j_{t}}\text{.}
\end{equation*}%
By the similar argument, $u^{H^{\prime }}\in P_{i}$. Hence $u^{H^{\prime
}}\in P_{j_{t}}\cap P_{i}=P^{\prime }$. It is then clear that $P^{\prime
}=I(H^{\prime },B_{H^{\prime }})$. Since $(E\backslash (H^{\prime
},B_{H^{\prime }}))^{0}=E^{0}\backslash H^{\prime }$ is downward directed, $%
P^{\prime }$ is a prime ideal (Theorem \ref{Prime ideals}). But then $%
P_{j_{t}}\cdot P_{i}=P_{j_{t}}\cap P_{i}\subseteq P^{\prime }$ implies that $%
P_{j_{t}}\subseteq P^{\prime }$ or $P_{i}\subseteq P^{\prime }$. This
implies that $P_{j_{t}}\subseteq P_{i}$ or $P_{i}\subseteq P_{j_{t}}$
contradicting that $I(H,S)=\cap _{j=1}^{m}P_{j}$ is an irredundant
intersection. Hence, we conclude that for each $t\in T$ there is a $%
P_{j_{t}} $ such that $v_{t}\notin P_{j_{t}}$ but $v_{t}\in P_{j}$ for all $%
j\neq j_{t} $. It is also clear that if $s\in T$ with $s\neq t$ (so that $%
c_{s}^{0}\cap c_{t}^{0}=\emptyset $), then the corresponding prime ideal $%
P_{j_{s}}\neq P_{j_{t}}$. Thus the map $t\mapsto P_{j_{t}}$ is an injective
map from $T$ to $\{P_{1},\ldots ,P_{m}\}$. Consequently, $|T|\leq m$, say $%
|T|=k$. After rearranging the indices, we may assume that $T=\{1,...,k\}$
and, for each $t=1,\ldots ,k$, $v_{t}\notin P_{t}$, but $v_{t}\in P_{j}$ for
all $j=1,\ldots ,m$ with $j\neq t$. Clearly, $I=I(H,S)+\Sigma
_{t=1}^{k}<f_{t}(c_{t})>$.
\end{proof}

\begin{theorem}
\label{I intersection of irreducibles} The following properties are
equivalent for an ideal $I$ of a Leavitt path algebra $L=L_{K}(E)$:

\begin{enumerate}
\item $I$ is an intersection of finitely many (strongly) irreducible ideals;

\item $gr(I)=$\ $P_{1}\cap\cdots\cap P_{m}$ is an irredundant intersection
of graded prime ideals;

\item $gr(I)=I(H,S)=$\ $P_{1}\cap\cdots\cap P_{m}$ is an irredundant
intersection of graded prime ideals, $I=I(H,S)+%
\Sigma_{t=1}^{k}<f_{t}(c_{t})> $, where $k\leq m$, for each $t=1,\ldots, k$, 
$c_{t}$ is a cycle without exits in $E\backslash(H,S)$ based at a vertex $%
v_{t} \not\in P_t$ and $f_{t}(x)\in K[x]$ with a non-zero constant term;

\item $I$ is a product of (finitely many) strongly irreducible ideals.
\end{enumerate}
\end{theorem}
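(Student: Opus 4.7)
The plan is to establish the cycle $(1)\Rightarrow(2)\Rightarrow(3)\Rightarrow(1)$ together with $(3)\Rightarrow(4)\Rightarrow(2)$, which closes the equivalence. The first two implications fall out immediately. For $(1)\Rightarrow(2)$, write $I=\bigcap_i Q_i$ with each $Q_i=P_i^{n_i}$ by Theorem \ref{Almost prime in an LPA}; then $gr(P_i^{n_i})=gr(P_i)$ is a graded prime ideal by Lemma \ref{Irreducible Ideals of L}, and since the graded part is always the largest graded subideal, $gr$ commutes with finite intersections, giving $gr(I)=\bigcap_i gr(P_i)$; deleting redundant factors yields the irredundant intersection demanded by (2). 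For $(2)\Rightarrow(3)$, Theorem \ref{genators of ideal} supplies the standard form $I=I(H,S)+\sum_{t\in T}\langle f_t(c_t)\rangle$; if $T=\emptyset$ we take $k=0$, and otherwise Lemma \ref{Influence of gr(I0 factorization} applies verbatim to give $|T|=k\le m$ and the desired indexing with $v_t\notin P_t$ and $v_t\in P_j$ for $j\ne t$.

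The heart of the argument lies in $(3)\Rightarrow(1)$ and $(3)\Rightarrow(4)$. Factor each $f_t(x)=\prod_{i=1}^{k_t}p_{t,i}(x)^{n_{t,i}}$ into powers of distinct irreducibles in $K[x,x^{-1}]$. Since $H\subseteq H_t$, the cycle $c_t$ remains exit-free in $E\setminus(H_t,B_{H_t})$, so Theorem \ref{Prime ideals} makes $P_{t,i}:=P_t+\langle p_{t,i}(c_t)\rangle$ a non-graded prime ideal. Using $P_t^n=P_t$ and $P_tA=P_t\cap A\subseteq P_t$ from Lemma \ref{Property of graded ideal}, expansion yields the key identity $(P_t+\langle g\rangle)(P_t+\langle h\rangle)=P_t+\langle gh\rangle$; iterating and combining with Proposition \ref{M=<c^0>}(ii) gives $P_{t,i}^{n_{t,i}}=P_t+\langle p_{t,i}^{n_{t,i}}(c_t)\rangle$ and $\prod_i P_{t,i}^{n_{t,i}}=P_t+\langle f_t(c_t)\rangle=:J_t$, while Lemma \ref{gr(I) prime=>Int. of Irred.} identifies $J_t$ with $\bigcap_i P_{t,i}^{n_{t,i}}$ as well. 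For $(3)\Rightarrow(1)$, put $W:=\bigcap_{t\le k}J_t\cap\bigcap_{j>k}P_j$: the inclusion $I\subseteq W$ uses $v_s\in P_t$ for $s\ne t$ so that $\langle f_s(c_s)\rangle\subseteq\langle v_s\rangle\subseteq P_t$, together with $v_s\in P_j$ for $s\le k<j$; for $W\subseteq I$, given $x\in W$ decompose $x=q_t+r_t$ with $q_t\in P_t$ and $r_t\in\langle f_t(c_t)\rangle$, and observe that each $r_t$ lies in every $P_s$ with $s\ne t$ so that $y:=x-\sum_{t\le k}r_t$ sits in $\bigcap_j P_j=gr(I)$, forcing $x\in gr(I)+\sum_t\langle f_t(c_t)\rangle=I$. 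For $(3)\Rightarrow(4)$, expand $J_tJ_{t'}=P_t\cap P_{t'}+\langle f_t(c_t)\rangle+\langle f_{t'}(c_{t'})\rangle$ via the same absorption, iterate to obtain $\prod_{t\le k}J_t=\bigcap_{t\le k}P_t+\sum_{t\le k}\langle f_t(c_t)\rangle$, multiply by the graded ideal $\prod_{j>k}P_j=\bigcap_{j>k}P_j$ (Lemma \ref{Property of graded ideal}(b)) so the product becomes an intersection, and apply the distributive law for ideals of $L$ together with $\sum_t\langle f_t(c_t)\rangle\subseteq\bigcap_{j>k}P_j$ to collapse the result to $gr(I)+\sum_t\langle f_t(c_t)\rangle=I$.

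The remaining step $(4)\Rightarrow(2)$ is quick: from $gr(A)\,gr(B)=gr(A)\cap gr(B)\subseteq AB\subseteq A\cap B$, with $gr(A)\cap gr(B)$ already graded, one reads off $gr(AB)=gr(A)\cap gr(B)$; iterating on $I=Q_1\cdots Q_n$ with each $gr(Q_i)=gr(P_i)$ a graded prime (Lemma \ref{Irreducible Ideals of L}) yields the required irredundant intersection. The principal obstacle is the combinatorial book-keeping in $(3)\Rightarrow(1)$ and $(3)\Rightarrow(4)$: one must systematically exploit the orthogonality $v_s\in P_t$ for $s\ne t$ supplied by Lemma \ref{Influence of gr(I0 factorization} to absorb every cross term in the product and intersection computations, and rely throughout on the distributive law for ideals of $L$.
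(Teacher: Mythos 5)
Your proof is correct, and its skeleton coincides with the paper's: both routes pass through the normal form $I=I(H,S)+\sum_{t}\langle f_{t}(c_{t})\rangle$, invoke Lemma \ref{Influence of gr(I0 factorization} for $(2)\Rightarrow(3)$, and build the auxiliary ideals $A_{t}=P_{t}+\langle f_{t}(c_{t})\rangle$ (your $J_{t}$), each of which is handled by Lemma \ref{gr(I) prime=>Int. of Irred.}. You diverge in two places. For the key identity $I=\bigcap_{t\leq k}A_{t}\cap\bigcap_{j>k}P_{j}$ the paper argues by induction on $k$ using the distributive law for the ideal lattice, whereas you verify both inclusions directly by decomposing an element $x\in W$ as $x=q_{t}+r_{t}$ and showing $x-\sum_{t}r_{t}\in\bigcap_{j}P_{j}=gr(I)$; this is an equivalent but more hands-on computation, driven by the same orthogonality $v_{s}\in P_{t}$ for $s\neq t$ that powers the paper's induction. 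More substantively, for condition (4) the paper simply cites Theorem 6.2 of \cite{R-2} for the equivalence of (2) with the existence of a prime factorization, while you produce the factorization explicitly, $I=\prod_{t,i}P_{t,i}^{n_{t,i}}\cdot\prod_{j>k}P_{j}$, via the absorption identity $(P_{t}+\langle g\rangle)(P_{t}+\langle h\rangle)=P_{t}+\langle gh\rangle$, and you derive $(4)\Rightarrow(2)$ from $gr(AB)=gr(A)\cap gr(B)$. This makes your argument self-contained where the paper outsources, at the cost of having to justify the absorption identity (which is the same computation the paper performs inside the proof of Theorem \ref{Almost prime in an LPA}). One small point worth making explicit: when you apply Lemma \ref{gr(I) prime=>Int. of Irred.} to $J_{t}$ you are implicitly using that $gr(J_{t})=P_{t}$, i.e.\ that adjoining $\langle f_{t}(c_{t})\rangle$ to $P_{t}$ introduces no new vertices; this holds because $f_{t}$ has a non-zero constant term and positive degree, and the paper makes the same tacit assumption when it forms its $A_{t}$.
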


\begin{proof}
If $I$ is a graded ideal, then conditions (1) and (4) are equivalent by
Lemma \ref{Property of graded ideal}. Also, since $I=gr(I)$, conditions (1)
and (2) are easily seen to be equivalent by using Theorem \ref{strongly
irred} and Lemma \ref{Irreducible Ideals of L}. Finally, for a graded ideal,
condition (3) simplifies to condition (2).

So we may take $I$ to be a non-graded ideal.

Assume (1) so $I={\displaystyle\bigcap\limits_{j=1}^{n}}Q_{j}$ is an
intersection of (strongly) irreducible ideals of $L$. Then $gr(I)=\cap
_{j=1}^{n}gr(Q_{j})$. If needed remove appropriate ideals $gr(Q_{j})$ and,
after re-indexing, assume $gr(I)=\cap_{j=1}^{m}gr(Q_{j})$ is an irredundant
intersection. By Theorem \ref{strongly irred}, each $Q_{j}$ is a power of a
prime ideal and so, by Lemma \ref{Irreducible Ideals of L}, $gr(Q_{j})=P_{j}$
is a graded prime ideal. Thus we get a representation of $gr(I)$ as an
irredundant intersection of graded prime ideals, $gr(I)=\cap_{j=1}^{m}P_{j}$%
. This proves (2).

Assume (2) so $gr(I)=I(H,S)=\cap _{j=1}^{m}P_{j}$ is an irredundant
intersection of graded prime ideals $P_{j}$. By Lemma \ref{Influence of
gr(I0 factorization}, we then have 
\begin{equation*}
I=I(H,S)+\Sigma _{t=1}^{k}<f_{t}(c_{t})>=(P_{1}\cap \cdots \cap
P_{m})+\Sigma _{t=1}^{k}<f_{t}(c_{t})>
\end{equation*}%
where $k\leq m$, for each $t=1,\ldots ,k$, $c_{t}$ is a cycle without exits
in $E\backslash (H,S)$ based at a vertex $v_{t}$ and $f_{t}(x)\in K[x]$ with
a non-zero constant term. Moreover, for each $t=1,\ldots ,k$, $v_{t}\notin
P_{t}$, but $v_{t}\in P_{j}$ for all $j=1,\ldots ,m$ with $j\neq t$. This
proves (3).

Assume (3). For each $t=1,\ldots, k$, define $A_{t}=P_{t}+<f_{t}(c_{t})>$.
By Lemma \ref{gr(I) prime=>Int. of Irred.}, each ideal $A_{t}$ is an
intersection of finitely many (strongly) irreducible ideals of $L$. So we
are done if we show that 
\begin{equation*}
(P_{1}\cap\cdots\cap
P_{m})+\Sigma_{t=1}^{k}<f_{t}(c_{t})>=A_{1}\cap\cdots\cap A_{k}\cap
P_{k+1}\cap\cdots\cap P_{m}\text{.}
\end{equation*}
We prove this by induction on $k$. Assume $k=1$. Consider $A_{1}\cap
P_{2}\cap\cdots\cap P_{m}$. Since the ideal lattice of $L$ is distributive (%
\cite{R-2}, Theorem 4.3), we have 
\begin{align*}
A_{1}\cap P_{2}\cap\cdots\cap P_{m} &
=(P_{1}+<f_{1}(c_{1})>)\cap(P_{2}\cap\cdots\cap P_{m}) \\
& =(P_{1}\cap\cdots\cap P_{m})+<f_{1}(c_{1})>\cap(P_{2}\cap \cdots\cap P_{m})
\\
& =(P_{1}\cap\cdots\cap P_{m})+<f_{1}(c_{1})>
\end{align*}
as $c_{1}\in P_{j}$ for all $j>1$ (Lemma \ref{Influence of gr(I0
factorization}).

Suppose $k>1$ and assume that 
\begin{equation*}
A_{1}\cap\cdots\cap A_{k-1}\cap P_{k}\cap\cdots\cap
P_{m}=(P_{1}\cap\cdots\cap P_{m})+\Sigma_{t=1}^{k-1}<f_{t}(c_{t})>\text{.}
\end{equation*}
Then%
\begin{align*}
& A_{1}\cap\cdots\cap A_{k}\cap P_{k+1}\cap\cdots\cap P_{m} \\
& =A_{1}\cap\cdots\cap A_{k-1}\cap(P_{k}+<f_{k}(c_{k})>)\cap
P_{k+1}\cap\cdots\cap P_{m} \\
& =(A_{1}\cap\cdots\cap A_{k-1}\cap P_{k}\cap\cdots\cap
P_{m})+A_{1}\cap\cdots\cap A_{k-1}\cap<f_{k}(c_{k})>\cap
P_{k+1}\cap\cdots\cap P_{m} \\
& =(P_{1}\cap\cdots\cap
P_{m})+\Sigma_{t=1}^{k-1}<f_{t}(c_{t})>+<f_{k}(c_{k})>
\end{align*}
\newline
as $<f_{k}(c_{k})>\subseteq P_{j}$ for all $j \neq k$ (Lemma \ref{Influence
of gr(I0 factorization})

By induction, we conclude that $I$ is an intersection of finitely many
(strongly) irreducible ideals. This proves (1).

Finally, the equivalence of conditions (4) and (2) follows from the fact
that a product of strongly irreducible ideals is also a product of prime
ideals (as a strongly irreducible ideal is a power of a prime ideal by
Theorem \ref{strongly irred}) and that the equivalence of condition (2) with
the existence of the prime factorization of $I$ is established in Theorem
6.2 of \cite{R-2}.
\end{proof}

\begin{remark}
\textrm{In general, a product of finitely many distinct prime ideals in a
general ring $R$ need not be equal to their intersection. We use the example
from \cite{RM}. Let $R=K[x,y]$ be the commutative ring of polynomials in two
variables $x,y$ over a field $K$. Now $P=<x,y>$ and $Q=<x>$ are prime ideals
of $R$. Then $PQ=<x^{2},xy>$ and $P\cap Q=Q$. Clearly, $PQ\neq P\cap Q$. In
contrast, the preceding theorem states that an ideal $I$ of a Leavitt path
algebra is a product of (finitely many) strongly irreducible ideals if and
only if $I$ is an intersection of finitely many strongly irreducible ideals. 
}
\end{remark}

Our next goal is to prove the uniqueness of factorization of an ideal of $L$
as a product as well as an intersection of finitely many strongly
irreducible ideals. Since strongly irreducible ideals are powers of prime
ideals, we consider the uniqueness of representing an ideal of $L$ as a
product/intersection of powers of distinct prime ideals. This is done in the
next two theorems. Here, we say a product $I=A_{1}\cdots A_{k}$ of distinct
ideals $A_{j}$ is an \textit{irredundant product} if $I$ is not the product
of any proper subset of ideals of the set $\{A_{1},\ldots ,A_{k}\}$.
Likewise, an intersection of distinct ideals $I=A_{1}\cap \cdots \cap A_{m}$
is said to be \textit{irredundant} if $I$ is not the intersection of any
proper subset of $\{A_{1},\ldots ,A_{m}\}$. In the proof of the theorem, we
shall be using the following properties of prime ideals of $L$.

\begin{lemma}
\label{Properties of primes} (a) Let $P$ be a prime ideal of $L$ and let $A$
be any ideal with $P\subseteq A$. Then

(i) \ (Lemma 5.3, \cite{R-2}) Either $P\subseteq gr(A)$ or $P=A$;

(ii) (Corollary 4.5, \cite{R-2}) If $P\neq A$, then $P=AP$.

(b) (Lemma 5.2, \cite{R-2}) Suppose $P$ is a prime ideal of $L$ and $A$ is
an ideal such that $P^{n}\subseteq A\subseteq P$ for some $n>1$. Then $%
A=P^{r}$ for some $1\leq r\leq n$;
\end{lemma}

\begin{theorem}
\label{Uniqueness} Suppose 
\begin{equation*}
I=P_{1}^{r_{1}}\cdots P_{m}^{r_{m}}=Q_{1}^{s_{1}}\cdots Q_{n}^{s_{n}}
\end{equation*}
are two representations of an ideal $I$ of a Leavitt path algebra $L$ as an
irredundant product of powers of distinct prime ideals. Then $m=n$ and $%
\{P_{1}^{r_{1}},\ldots, P_{m}^{r_{m}}\}=\{Q_{1}^{s_{1}},\ldots,
Q_{n}^{s_{n}}\}$.
\end{theorem}

\begin{proof}
Now the prime ideal $P_{1}$ contains the product $Q_{1}^{s_{1}}\cdots
Q_{n}^{s_{n}}$ and so $P_{1}\supseteq Q_{j_{1}}$ for some index $j_{1}$. In
the same way, the prime ideal $Q_{j_{1}}$ contains $P_{1}^{r_{1}}\cdots
P_{m}^{r_{m}}$ and so $Q_{j_{1}}\supseteq P_{i_{1}}$ for some $i_{1}$. So $%
P_{1}\supseteq P_{i_{1}}$. We claim that $P_{1}=P_{i_{1}}$. Because, $%
P_{1}\supsetneq P_{i_{1}}$ implies, by Lemma \ref{Properties of primes}%
(a)(i), that $gr(P_{1})\supseteq P_{i_{1}}$and so $%
gr(P_{1})=(gr(P_{1}))^{r_{i_{1}}}\supseteq P_{i_{1}}^{r_{i_{1}}}$ which
implies that $gr(P_{1})P_{i_{1}}^{r_{i_{1}}}=gr(P_{1})\cap
P_{i_{1}}^{r_{i_{1}}}=P_{i_{1}}^{r_{i_{1}}}$. By Lemma \ref{Property of
graded ideal}, we have $%
P_{1}^{r_{1}}P_{i_{1}}^{r_{i_{1}}}=P_{1}^{r_{1}}(gr(P_{1})P_{i_{1}}^{r_{i_{1}}})=(P_{1}^{r_{1}}\cap gr(P_{1}))P_{i_{1}}^{r_{i_{1}}}=gr(P_{1})P_{i_{1}}^{r_{i_{1}}}=P_{i_{1}}^{r_{i_{1}}} 
$. Then, in the product $P_{1}^{r_{1}}\cdots P_{m}^{r_{m}}$, using the
commutativity of the ideal multiplication (\cite{R-2}, Theorem 3.4; \cite%
{AAS}, Corollary 2.8.17), $P_{1}^{r_{1}}P_{i_{1}}^{r_{i_{1}}}$ can be
replaced by $P_{i_{1}}^{r_{i_{1}}}$. This contradicts the irredundancy of
the product. Thus $P_{1}=P_{i_{1}}$ and consequently, $P_{1}=Q_{j_{1}}$.
Re-arranging the factors, we write, without loss of generality, 
\begin{equation*}
I=P_{1}^{r_{1}}P_{2}^{r_{2}}\cdots
P_{m}^{r_{m}}=P_{1}^{s_{j_{1}}}Q_{2}^{s_{2}}\cdots Q_{n}^{s_{n}}.
\end{equation*}
Repeating this process, using the irredundancy and successively replacing $%
Q_{j_{2}},\ldots, Q_{j_{m}}$ by $P_{2},\ldots, P_{m}$ respectively, we get $%
m\leq n$. Likewise, starting with the prime ideals $Q_{1},\ldots, Q_{n}$ and
replacing them by the ideals $P_{i_{1}},\ldots, P_{i_{n}}$ we conclude that $%
n\leq m$. Consequently $m=n$, the map $j\mapsto i_{j}$ is a permutation $%
\sigma$ on the set $\{1,\ldots,m\}$ such that $P_{j}=Q_{\sigma(j)}$. In
particular, $\{P_{1},\ldots, P_{m}\}=\{Q_{1}, \ldots, Q_{m}\}$. Thus%
\begin{equation*}
I=P_{1}^{r_{1}}P_{2}^{r_{2}}\cdots
P_{m}^{r_{m}}=P_{1}^{t_{i}}P_{2}^{t_{2}}\cdots
P_{m}^{t_{m}}\qquad\qquad\qquad(\ast)
\end{equation*}
is an irredundant product of powers of distinct prime ideals where $%
t_{j}=s_{\sigma(j)}$ for all $j=1,\cdot\cdot\cdot,m$ and where we assume
that $r_{j}=1=t_{j}$ if $P_{j}$ is a graded ideal (Lemma \ref{Property of
graded ideal}). Note that necessarily $P_{i}\nsubseteq P_{j}$ for all $i\neq
j$; because, $P_{i}\subseteq P_{j}$ and $P_{i}\neq P_{j}$ implies that $%
P_{i} $ $\subseteq gr(P_{j})$ and $P_{i}=P_{i}P_{j}$ (Lemma \ref{Properties
of primes} (a)(i),(ii)). As before, we can then derive that $%
P_{i}^{r_{i}}P_{j}^{r_{j}}=P_{i}^{r_{i}}$\ and this will lead to
contradicting the irredundancy of the product (*)

Next, we wish to show that the exponents $r_{j}=t_{j}$ for all $j=1,\ldots,
m $. If all the ideals $P_{j}$ are graded, then $%
P_{j}^{r_{j}}=P_{j}=P_{j}^{t_{j}}$ and we can conclude that $r_{j}=1=t_{j}$
for all $j=1,\ldots, m$. So assume that at least one of the ideals is
non-graded. Let $j$ be an arbitrary index for which $P_{j}$ is a non-graded
(prime) ideal. So we are done if we show that $r_{j}=t_{j}$. Using the
commutativity of the ideal multiplication in $L$ and re-indexing, we may
assume, for convenience in writing, that $j=1$ and so $P_{1}$ is a
non-graded prime ideal, say $P_{1}=I(H,B_{H})+<p(c)>$, where $H=P_{1}\cap
E^{0}$, $E\backslash(H,B_{H})$ is downward directed, $c$ is a cycle without
exits in $E\backslash(H,B_{H})$ based at a vertex $v$ and $p(x)$ is an
irreducible polynomial in $K[x,x^{-1}]$. Now $\bar{L}=L/I(H,B_{H})\cong
L_{K}(E\backslash(H,B_{H}))$ and under this isomorphism, we identify $\bar{L}
$ with $L_{K}(E\backslash(H,B_{H}))$. Let $\bar{P}%
_{j}=(P_{j}+I(H,B_{H}))/I(H,B_{H})$ for all $j=1,\ldots, m$ and let $M$ be
the ideal of $\bar{L}$ generated by $c^{0}$. Clearly $\bar{P}%
_{1}=<p(c)>\varsubsetneq M$. By Lemma \ref{Ideal in downward dir graph with
NE cycle}, we have, for each $j\geq2$, \ either $M\subseteq\bar{P}_{j}$ or $%
\bar{P}_{j}\subseteq M$ according as $\bar{P}_{j}$ contains a vertex or not.

If $M\subseteq\bar{P}_{j}$ $\ $for all $j=2,\ldots, m$ then, since $M$ is a
graded ideal, we have, by Lemma 2.4, $M\bar{P}_{j}=M\cap\bar{P}_{j}=M$ for
all $j=2,\ldots, m$ and also $\bar{P}_{1}M=\bar{P}_{1}\cap M=\bar{P}_{1}$.\
Using these equations repeatedly, we then have, 
\begin{equation*}
\bar{P}_{1}^{r_{1}}\bar{P}_{2}^{r_{2}}\cdots\bar{P}_{m}^{r_{m}}=\bar{P}%
_{1}^{r_{1}}M\bar{P}_{2}^{r_{2}}\cdots\bar{P}_{m}^{r_{m}}=\bar{P}%
_{1}^{r_{1}}M=\bar{P}_{1}^{r_{1}}=<p^{r_{1}}(c)>.
\end{equation*}
Likewise, 
\begin{equation*}
\bar{P}_{1}^{t_{1}}\bar{P}_{2}^{t_{2}}\cdots\bar{P}_{m}^{t_{m}}=\bar{P}%
_{1}^{t_{1}}M\bar{P}_{2}^{t_{2}}\cdots\bar{P}_{m}^{t_{m}}=\bar{P}%
_{1}^{t_{1}}M=\bar{P}_{1}^{t_{1}}=<p^{t_{1}}(c)>.
\end{equation*}
From the equation (*), we have $<p^{r_{1}}(c)>=<p^{t_{1}}(c)\subseteq
vL_{K}(E\backslash(H,B_{H}))v$, noting that $vp(c)v=p(c)$. Now $%
vL_{K}(E\backslash(H,B_{H}))v\overset{\theta}{\cong}K[x,x^{-1}]$ where the
isomorphism $\theta$ maps $v$ to $1$, $c$ to $x$, $c^{\ast}$ to $x^{-1}$ and
thus maps $p(c)$ to $p(x)$. Hence $<p^{r_{1}}(c)>=<p^{t_{1}}(c)>$ implies
that $<p^{r_{1}}(x)>=<p^{t_{1}}(x)>$ in $K[x,x^{-1}]$. Since $K[x,x^{-1}]$
is a unique factorization domain and $p(x)$ is irreducible, we then conclude
that $r_{1}=t_{1}$.

Suppose not all the $\bar{P}_{j}$ contain $M$. Without loss of generality,
assume $\bar{P}_{j}\subsetneq M$ for $j=2,\ldots, k$ and $\bar{P}%
_{j}\supseteqq M$ \ for $j=k+1,\ldots, m$. By Lemma \ref{Ideal in downward
dir graph with NE cycle}, $\bar{P}_{j}^{r_{j}}=<f_{j}(c)>$ for all $%
j=2,\ldots, k$ where $f_{j}(x)\in K[x]$ with a non-zero constant term and
moreover, $\bar{P}_{j}^{r_{j}}$ does not contain any vertex in $\bar{L}$.
This means that $gr(P_{j})=gr(P_{j}^{r_{j}})\subseteq I(H,B_{H})$. Notice
that this implies that the ideal $P_{j}$ is not graded (since otherwise, $%
P_{j}=gr(P_{j})\subseteq I(H,B_{H})\varsubsetneq P_{1}$ which implies, by
Lemma \ref{Properties of primes}, that $P_{j}=P_{j}P_{1}$ and this leads to
contradicting the irredundancy of the product (*)). Let $%
P_{j}=I(H_{j},B_{H_{j}})+<q_{j}(c_{j})>$ where $c_{j}$ is a cycle without
exits in $E\backslash(H_{j},B_{H_{j}})$ which is downward directed and $%
q_{j}(x)$ is an irreducible polynomial in $K[x,x^{-1}]$. We claim that $%
H_{j}=H$. Otherwise, there will be a vertex $u\in H\backslash H_{j}$ and, as 
$u\geq w$ for some vertex $w\in c_{j}^{0}$ (due to downward directness), $H$
will contain $w$ and hence $c_{j}^{0}$. This will imply that $P_{1}\supseteq
P_{j}$, a contradiction as $P_{i}\nsubseteq P_{j}$ for all $i\neq j$. Thus $%
E\backslash(H,B_{H})=E\backslash(H_{j},B_{H_{j}})$, $c_{j}=c$ and $%
P_{j}=I(H,B_{H})+<q_{j}(c)>$. This holds for all $j=2,\ldots, k$. Clearly, $%
p(x)\neq q_{j}(x)$ for any $j=2,\ldots, k$. Now, as noted in the preceding
paragraph, $M\bar{P}_{k+1}^{r_{k+1}}\cdots\bar{P}_{m}^{r_{m}}=M$ and that $%
\bar{P}_{1}^{r_{1}}\cdots\bar{P}_{k}^{r_{k}}M=\bar{P}_{1}^{r_{1}}\cdots\bar{P%
}_{k}^{r_{k}}$ and so we have 
\begin{align*}
\bar{P}_{1}^{r_{1}}\bar{P}_{2}^{r_{2}}\cdots\bar{P}_{m}^{r_{m}} & =\bar{P}%
_{1}^{r_{1}}\cdots\bar{P}_{k}^{r_{k}}M\bar{P}_{k+1}^{r_{k+1}}\cdots\bar{P}%
_{m}^{r_{m}}=\bar{P}_{1}^{r_{1}}\cdots\bar {P}_{k}^{r_{k}} \\
& =<p^{r_{1}}(c)><q_{2}^{r_{2}}(c)>\cdots<q_{k}^{r_{k}}(c)>.
\end{align*}
Similarly, 
\begin{align*}
\bar{P}_{1}^{t_{1}}\bar{P}_{2}^{t_{2}}\cdots\bar{P}_{m}^{t_{m}} & =\bar{P}%
_{1}^{t_{1}}\cdots\bar{P}_{k}^{t_{k}}M\bar{P}_{k+1}^{t_{k+1}}\cdots\bar{P}%
_{m}^{t_{m}}=\bar{P}_{1}^{t_{1}}\cdots\bar {P}_{k}^{t_{k}} \\
& =<p^{t_{1}}(c)><q_{2}^{t_{2}}(c)>\cdots<q_{k}^{t_{k}}(c)>.
\end{align*}
From the equation (*), we have 
\begin{equation*}
<p^{r_{1}}(c)><q_{2}^{r_{2}}(c)>%
\cdots<q_{k}^{r_{k}}(c)>=<p^{t_{1}}(c)><q_{2}^{t_{2}}(c)>%
\cdots<q_{k}^{t_{k}}(c)>.
\end{equation*}
Again, as noted in the previous paragraph, we use the isomorphism%
\begin{equation*}
vL_{K}(E\backslash(H,B_{H}))v\overset{\theta}{\cong}K[x,x^{-1}]
\end{equation*}
to conclude that, in $K[x,x^{-1}]$ 
\begin{equation*}
<p^{r_{1}}(x)><q_{2}^{r_{2}}(x)>%
\cdots<q_{k}^{r_{k}}(x)>=<p^{t_{1}}(x)><q_{2}^{t_{2}}(x)>%
\cdots<q_{k}^{t_{k}}(x)>
\end{equation*}
Now $p(x),q_{2}(x)$,$\ldots, q_{k}(x)$ \ are all distinct irreducible
polynomials in $K[x,x^{-1}]$ and so, by the uniqueness of prime power
factorization in $K[x,x^{-1}]$, we conclude that $r_{1}=t_{1}$. By repeating
this argument for every $j$ for which $P_{j}$ is a non-graded ideal, we
conclude that $r_{j}=t_{j}$ for all $j=1,\ldots, m$. This proves Theorem \ref%
{Uniqueness}.
\end{proof}

\noindent The next theorem considers the uniqueness of representing an ideal 
$I$ of $L$ as an irredundant intersection of strongly irreducible ideals.
Again, we state the theorem in terms of powers of distinct prime ideals. It
states, in particular, that if $I$ is an irredundant intersection of
finitely many powers of distinct prime ideals, then $I$ cannot be an
irredundant intersection of infinitely powers of distinct prime ideals.

\begin{theorem}
\label{uniqueness-intersect} Suppose 
\begin{equation*}
I=P_{1}^{r_{1}}\cap\cdots\cap P_{m}^{r_{m}}=\bigcap \limits_{j\in
J}Q_{j}^{s_{j}}
\end{equation*}
are two irredundant intersections of an ideal $I$ of $L$, where $J$ is an
arbitrary index set and the ideals $P_{i}^{r_{i}},Q_{j}^{s_{j}}$ are powers
of distinct prime ideals. Then $|J|=m$ and $\{P_{1}^{r_{1}},\ldots
,P_{m}^{r_{m}}\}=\{Q_{j}^{s_{j}}:j\in J\}$.
\end{theorem}

\begin{proof}
The idea of the proof is essentially the one used in the proof of Theorem %
\ref{Uniqueness}. After noting that \textrm{\ }$P_{1}P_{i_{1}}=P_{i_{1}}$
implies that $P_{1}\cap P_{i_{1}}=P_{i_{1}}$ and likewise, $%
P_{i}^{r_{i}}P_{j}^{r_{j}}=P_{i}^{r_{i}}$ implies that $P_{i}^{r_{i}}\cap
P_{j}^{r_{j}}=P_{i}^{r_{i}}$, proceed as in the proof of Theorem \ref%
{Uniqueness} to conclude that for each $i=1,\ldots, m$, $P_{i}=Q_{j_{i}}$.
We claim that $|J|=m$. Suppose not. Let $J^{\ast
}=J\backslash\{j_{1},\ldots, j_{m}\}$. Consider the irredundant
intersections 
\begin{equation*}
I=P_{1}^{r_{1}}\cap\cdots\cap P_{m}^{r_{m}}=(P_{1}^{s_{1}}\cap \cdots\cap
P_{m}^{s_{m}})\cap(\bigcap \limits_{j\in
J^{\ast}}Q_{j}^{s_{j}})\qquad\qquad\qquad(\#).
\end{equation*}
Now, for a $j\in J^{\ast}$, the corresponding prime ideal $Q_{j}$ contains $%
P_{1}^{r_{1}}\cap\cdots\cap P_{m}^{r_{m}}$ and so $Q_{j}\supseteq P_{i}$ for
some $i\in\{1,\ldots, m\}$. Note that $Q_{j}\neq P_{i}$, because otherwise $%
Q_{j}^{s_{j}}=P_{i}^{s_{j}}$ and $P_{i}^{s_{i}}\cap
Q_{j}^{s_{j}}=P_{i}^{s_{i}}\cap P_{i}^{s_{j}}=P_{i}^{s_{i}}$ or $%
Q_{j}^{s_{j}}$ leading to contradicting the irredundancy of the intersection 
$(P_{1}^{s_{1}}\cap\cdots\cap P_{m}^{s_{m}})\cap(\bigcap \limits_{j\in
J^{\ast}}Q_{j}^{s_{j}})$. Appealing to Lemma \ref{Properties of primes}%
(a)(ii), we then get $P_{i}\subseteq gr(Q_{j})$ which implies that $%
P_{i}^{s_{i}}\subseteq(gr(Q_{j}))^{s_{i}}=$ $gr(Q_{j})\subseteq
Q_{j}^{s_{j}} $. Then $P_{i}^{s_{i}}\cap Q_{j}^{s_{j}}=P_{i}^{r_{i}}$ which
again contradicts the irredundancy of the intersection \ $%
(P_{1}^{s_{1}}\cap\cdots\cap P_{m}^{s_{m}})\cap(\bigcap \limits_{j\in
J^{\ast}}Q_{j}^{s_{j}})$. Thus $|J|=m$ and we have the irredundant
intersections 
\begin{equation*}
I=P_{1}^{r_{1}}\cap\cdots\cap P_{m}^{r_{m}}=Q_{j_{1}}^{s_{1}}\cap\cdots\cap
Q_{j_{m}}^{s_{m}}=P_{1}^{s_{1}}\cap\cdots\cap P_{m}^{s_{m}}.
\end{equation*}
Then proceed as in the proof of Theorem \ref{Uniqueness} to reach the
conclusion that $r_{j}=s_{j}$ for all $j=1,\ldots,m$ after noting that, in
the unique factorization domain $K[x,x^{-1}]$, $<p^{k}(x)>\cap
<q^{n}(x)>=<p^{k}(x)><q^{n}(x)>$ for any two irreducible polynomials $p(x)$
and $q(x)$.
\end{proof}

In \cite{HO}, Heinzer and Olberding raised the following question which
(according to Bruce Olberding) is still open.

\begin{question}
$($Heinzer - Olberding, \cite{HO}$)$ Under what conditions every ideal in a
commutative ring $R$ can be uniquely represented as an irredundant
intersection of irreducible ideals?
\end{question}

We completely answer this question in the context of Leavitt path algebras.
Interestingly, it turns out (see Theorem \ref{Uniqueness}) that, in a
Leavitt path algebra, if an ideal $I$ is represented as an irredundant
intersection/product of finitely many irreducible ideals, then such a
representation is automatically unique.

\begin{theorem}
\label{Every I is int irreducibles}The following properties are equivalent
for a Leavitt path algebra $L=L_{K}(E)$:

\begin{enumerate}
\item Every ideal of $L$ is an irredundant intersection of finitely many
(strongly) irreducible ideals;

\item Every ideal of $L$ is a product of (finitely many) strongly
irreducible ideals;

\item $L$ is a generalized ZPI ring, that is, every ideal of $L$ is a
product of prime ideals;

\item Every non-zero homomorphic image of $L$ is either a prime ring or
contains only finite number of minimal prime ideals;

\item For every admissible pair $(H,S)$, $(E\backslash(H,S))^{0}$ is the
union of a finite number of maximal tails.
\end{enumerate}
\end{theorem}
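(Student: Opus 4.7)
The plan is to establish the cycle $(1)\Leftrightarrow(2)\Leftrightarrow(3)\Rightarrow(4)\Rightarrow(5)\Rightarrow(1)$. The equivalences $(1)\Leftrightarrow(2)\Leftrightarrow(3)$ are obtained by upgrading Theorem \ref{I intersection of irreducibles} and Theorem \ref{strongly irred} ideal-by-ideal: the former already gives that a single ideal is a product of strongly irreducible ideals iff it is an intersection of such, while the latter identifies the strongly irreducible ideals with the prime powers, so a product of primes and a product of strongly irreducibles coincide (each prime $P=P^{1}$ is itself strongly irreducible). For $(3)\Rightarrow(4)$, I would factor any proper $I$ as $Q_{1}\cdots Q_{n}$ with each $Q_{j}$ prime; then any prime containing $I$ contains the product and hence some $Q_{j}$, so the minimal primes of $L/I$ form a subset of the minimal members of $\{Q_{1},\dots,Q_{n}\}$, which is finite (and $L/I$ is a prime ring when $n=1$).

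For $(5)\Rightarrow(1)$, let $I$ be an arbitrary ideal with $gr(I)=I(H,S)$ and set $F:=E\setminus(H,S)$. By $(5)$, write $F^{0}=M_{1}\cup\cdots\cup M_{k}$ with each $M_{j}$ a maximal tail of $F$. By Theorem \ref{Prime ideals}(1), each $\bar{P}_{j}:=I(F^{0}\setminus M_{j},\,B_{F^{0}\setminus M_{j}})$ is a graded prime ideal of $L_{K}(F)$. The intersection $\bigcap_{j}\bar{P}_{j}$ is a graded ideal whose intersection with $F^{0}$ equals $\bigcap_{j}(F^{0}\setminus M_{j})=F^{0}\setminus\bigcup_{j}M_{j}=\emptyset$; and any graded ideal $I(H',S')$ with $H'=\emptyset$ must have $S'\subseteq B_{\emptyset}=\emptyset$ (every infinite emitter has infinitely many edges into $F^{0}=F^{0}\setminus\emptyset$), forcing $\bigcap_{j}\bar{P}_{j}=0$. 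Lifting to $L$, $gr(I)=P_{1}\cap\cdots\cap P_{k}$ is an intersection of finitely many graded prime ideals (after discarding redundant factors), and the $(2)\Rightarrow(1)$ direction of Theorem \ref{I intersection of irreducibles} yields $(1)$.

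The final implication $(4)\Rightarrow(5)$ is the delicate one and will be the main obstacle. Fix an admissible pair $(H,S)$, set $F=E\setminus(H,S)$, and apply $(4)$ to $L_{K}(F)=L/I(H,S)$. If $L_{K}(F)$ is prime, $F^{0}$ is downward directed and is itself a single maximal tail. Otherwise there are finitely many minimal primes $\bar{P}_{1},\dots,\bar{P}_{m}$, each graded (a non-graded prime properly contains its graded part, which is itself prime by Theorem \ref{Prime ideals}). Since the Baer radical of $L_{K}(F)$ vanishes, $\bigcap_{i}\bar{P}_{i}=0$, whence $F^{0}=\bigcup_{i}(F^{0}\setminus\bar{H}_{i})$ where $\bar{H}_{i}=\bar{P}_{i}\cap F^{0}$. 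The subtlety is that the downward directed complement $F^{0}\setminus\bar{H}_{i}$ can fail condition $(iii)$ of a maximal tail exactly when some infinite emitter $u\in F^{0}\setminus\bar{H}_{i}$ sends every edge into $\bar{H}_{i}$; such a $u$ becomes an isolated sink in the quotient $L_{K}(F)/\bar{P}_{i}$ and spawns a $K$-summand that necessarily produces additional incomparable minimal primes. Under the finiteness hypothesis of $(4)$ only finitely many such offending vertices can occur, and I would show that every such $u$ sits inside an honest maximal tail of $F$ (built by adjoining a suitable successor of $u$ together with the forced upward closure). Replacing each offending $F^{0}\setminus\bar{H}_{i}$ by such an enveloping maximal tail preserves the finite cover of $F^{0}$, completing $(4)\Rightarrow(5)$.
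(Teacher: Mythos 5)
Your reductions $(1)\Leftrightarrow(2)\Leftrightarrow(3)$ and the implication $(5)\Rightarrow(1)$ are correct and essentially coincide with the paper's argument (which also routes $(5)\Rightarrow(1)$ through condition (2) of Theorem \ref{I intersection of irreducibles}). Your direct argument for $(3)\Rightarrow(4)$ --- every prime over $I=Q_{1}\cdots Q_{n}$ contains some $Q_{j}$, and each $Q_{j}$ contains $I$, so the minimal primes over $I$ sit among the minimal members of $\{Q_{1},\dots,Q_{n}\}$ --- is clean and more self-contained than the paper, which simply cites the equivalence $(3)\Leftrightarrow(4)$ from Theorem 6.5 of \cite{R-2}. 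But note that the paper closes the circle by proving $(1)\Rightarrow(5)$ directly from the strongly irreducible decomposition of a graded ideal, so it never needs $(4)\Rightarrow(5)$; your cycle does need it, and that is where the genuine gap lies.

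In $(4)\Rightarrow(5)$ you correctly isolate the obstruction: for a minimal (necessarily graded) prime $\bar{P}_{i}=I(\bar{H}_{i},\bar{S}_{i})$ of $L_{K}(F)$, the downward directed, upward closed set $M_{i}=F^{0}\setminus\bar{H}_{i}$ can fail clause (3) of the maximal tail definition only at an infinite emitter $w\in M_{i}$ all of whose edges land in $\bar{H}_{i}$. But your proposed repair is only a sketch and, as sketched, does not work: the tail obtained by ``adjoining a suitable successor $z$ of $w$ and taking the upward closure'' is the set $\{v:v\geq z\}$, which satisfies clause (3) at $z$ itself only when $z$ is a sink or lies on a closed path; for a general successor $z\in\bar{H}_{i}$ it is not a maximal tail, and iterating the choice of successor need not terminate. (The claim that such a $w$ ``becomes an isolated sink and spawns a $K$-summand'' is also inaccurate: every vertex of $M_{i}$ connects to $w$, so $w$ is a sink but not isolated in the quotient.) The repair that actually works is absorption rather than enlargement. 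First, downward directedness of $M_{i}$ plus hereditariness of $\bar{H}_{i}$ force $v\geq w$ for every $v\in M_{i}$. Next, since $w$ emits infinitely many edges, all into $\bar{H}_{i}$, and $\bigcap_{l}\bar{H}_{l}=\emptyset$ with only finitely many $l$, some index $l\neq i$ has infinitely many of the ranges $r(e)$ outside $\bar{H}_{l}$; hereditariness then forces $w\notin\bar{H}_{l}$, so $w\in M_{l}$ with an edge into $M_{l}$, and upward closedness of $M_{l}$ gives $M_{i}\subsetneq M_{l}$. An offending $M_{i}$ is therefore redundant in the union and can simply be discarded; after finitely many strict inclusions one is left with a finite cover of $F^{0}$ by offender-free sets $M_{l}$, each of which is an honest maximal tail. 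Until this (or an equivalent) argument is supplied, the implication $(4)\Rightarrow(5)$, and with it your whole cycle, is incomplete.
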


\begin{proof}
Now (1) $\Longleftrightarrow$ (2) follows from the equivalence of conditions
(1) and (4) in Theorem \ref{I intersection of irreducibles}.

Assume (2). Let $I$ be an arbitrary ideal of $L$. We are given that $%
I=Q_{1}\cdots Q_{n}$ is a product of strongly irreducible ideals. By Theorem %
\ref{strongly irred}, each $Q_{j}=P_{j}^{k_{j}}$, where the $P_{j}$ are
prime ideals with $k_{j}\geq1$,. Expanding each $P_{j}^{k_{j}}$, $I$ then
becomes a product of prime ideals. Thus $L$ is a generalized ZPI ring. This
proves (3).

Since every prime ideal is strongly irreducible, condition (3) implies
condition (2).

The equivalence of conditions (3) and (4) has been established in (Theorem
6.5, \cite{R-2}).

Assume (1). Given an admissible pair $(H,S)$, consider the graded ideal $%
A=I(H,S)$. By hypothesis, $A={\displaystyle\bigcap\limits_{j=1}^{m}} A_{j}$
where each $A_{j}$ is strongly irreducible and $m$ is a positive integer.
Since $A$ is graded, $A={\displaystyle\bigcap\limits_{j=1}^{m}} gr(A_{j})={%
\displaystyle\bigcap\limits_{j=1}^{m}} P_{j}$ where $P_{j}=gr(A_{j})$ is a
(graded) prime ideal as $A_{j}$ is a power of a prime ideal by Theorem \ref%
{strongly irred}. Then, in $L_{K}(E\backslash(H,S))\cong L/I(H,S)$, $\{0\}={%
\displaystyle\bigcap\limits_{j=1}^{m}} Q_{j}$ where each $Q_{j}=P_{j}/I(H,S)$
is a graded prime ideal, say $Q_{j}=I(H_{j},S_{j})$ where $%
H_{j}=Q_{j}\cap(E\backslash(H,S))^{0}$ and $(E\backslash(H,S))^{0}\backslash
H_{j}$ is downward directed. Now ${\displaystyle\bigcap\limits_{j=1}^{m}}
H_{j}=\emptyset$ and so $(E\backslash(H,S))^{0}={\displaystyle\bigcup
\limits_{j=1}^{m}} M_{j}$ where $M_{j}=(E\backslash(H,S))^{0}\backslash
H_{j} $ is a maximal tail. This proves (5).

Assume (5). We shall prove (1). In view of Theorem \ref{I intersection of
irreducibles}(2), it is enough if we show that every graded ideal of $L$ is
an intersection of finitely many graded prime ideal. This is done just by
reversing the arguments in the proof of (1) $\implies$ (5). Let $A=I(H,S)$
be a graded ideal. By hypothesis, $(E\backslash (H,S))^{0}={\displaystyle%
\bigcup\limits_{j=1}^{n}} M_{j}$ is a union of $n$ maximal tails for
positive integer $n$. For each $j=1,\ldots,n$, define $H_{j}=(E%
\backslash(H,S))^{0}\backslash M_{j}$ and $Q_{j}=I(H_{j},B_{H_{j}})$. Then
each $Q_{j}$ is a graded prime ideal and ${\displaystyle\bigcap%
\limits_{j=1}^{n}}Q_{j}=\{0\}$, as ${\displaystyle\bigcap\limits_{j=1}^{n}}
H_{j}=\emptyset$. It is then clear that $A=I(H,S)={\displaystyle%
\bigcap\limits_{j=1}^{n}} P_{j}$ where $P_{j}\supseteq I(H,S)$ is a graded
prime ideal such that $P_{j}/I(H,S)=Q_{j}$. This proves (1).
\end{proof}

\noindent The example below illustrates the above theorem for a finite graph.

\begin{example}
\textrm{Let $E$ be a finite graph, or more generally, let $E^{0}$ be finite.
Then every ideal of the Leavitt path algebra $L_{K}(E)$ is an intersection
of finitely many strongly irreducible ideals. Justification: In view of
Theorem \ref{Every I is int irreducibles}, we need only to show that every
graded ideal $I$ of $L$ is an intersection of finitely many prime ideals of $%
L$. Let $I\cap E^{0}=H$. Since $L/I\cong L_{K}(E\backslash H)$ is a Leavitt
path algebra, its prime radical is $0$, and so $I=\cap\{P:P$ prime ideal $%
\supseteq I\}$ = $\cap\{gr(P):P$ graded prime ideal $\supseteq I\}$. Since $%
(E\backslash H)^{0}$ is finite, there are only finitely many hereditary
saturated subsets of $(E\backslash H)^{0}$ and so there are only finitely
many graded (in particular, graded prime) ideals in $L/I$. This means that $%
I $ is an intersection of finitely many graded prime ideals. }
\end{example}

\noindent To illustrate Theorem \ref{Every I is int irreducibles} for an
infinite graph, let us consider the following example.

\begin{example}
\label{row-finite/countable graph} \rm Let $E$ be the following graph

\begin{tikzpicture}[x=1cm, y=1cm,every edge/.style={draw, postaction={decorate,decoration={markings,mark=at position .3 with {\arrow{>}}}}}]
\node[vertex, label = {left: $u$}] (u) at (-3, 1) {}; 
\node[vertex, label = {above: $v_1$}] (v1) at (-2,1) {};
\node[vertex, label = {above: $v_2$}] (v2) at (-1, 1) {}; 
\node[vertex, label = {above: $v_3$}] (v3) at (0, 1) {};  
\node at (.7,1) {\dots};  
\node[vertex, label = {right: $w_1$}] (w1) at (-3,2) {};
\node[vertex, label = {right: $w_2$}] (w2) at (-3, 3) {}; 
\node[vertex, label = {right: $w_3$}] (w3) at (-3, 4) {}; 
\node at (-3,4.7) {\vdots};   
\path[->] (u)  edge (v1);
\path[->] (v1)  edge (v2);
\path[->] (v2)  edge  (v3);
\path[->] (v3)  edge (0.5,1);  
\path[->] (u)  edge (w1);
\path[->] (w1)  edge (w2);
\path[->] (w2)  edge  (w3);
\path[->] (w3)  edge (-3,4.4);

\path [->] (w1) edge [in=10 ,out=80, looseness=13,->](w1);
\path [->] (w1) edge [in=260 ,out=180,looseness=13,->](w1);
\path [->] (v1) edge [in=10 ,out=80, looseness=13,->](v1);
\path [->] (v1) edge [in=260 ,out=180,looseness=13,->](v1);

\end{tikzpicture}

\noindent The graph is row-finite and satisfies the Condition (K).
The proper non-empty hereditary saturated subsets of this graph are
precisely the sets $A_{1}=\{v_{n}:n\geq 1\}$; $A_{2}=\{v_{n}:n\geq 2\}$; $%
B_{1}=\{w_{n}:n\geq 1\}$; $B_{2}=\{w_{n}:n\geq 2\}$. For each $i=1,2$, $%
E^{0}\setminus A_{i}$ and also $E^{0}\setminus B_{i}$ is the union of at
most two maximal tails. Thus every non-zero ideal of $L_{K}(E)$ is the
intersection as well as a product of at most two graded strongly irreducible
ideals. If $\ P_{1}=<A_{1}>$ and $P_{2}=<B_{1}>$, then $P_{1}$ and $P_{2}$
are prime and hence strongly irreducible ideals and $P_{1}\cap P_{2}=\{0\}$.
Thus every ideal of $L_{K}(E)$ is a product/intersection of at most two
strongly irreducible ideals. 
\end{example}

\begin{remark}
\textrm{Recall, an ideal $I$ of a ring $R$ is said to be a\textit{\ primary
ideal} if, for all ideals $A,B$ of $R$, $AB\subseteq I$ and $A\nsubseteq I$
implies that $B\subseteq rad(I)$. A ring $R$ is said to be \textit{Laskerian}
(or simply, \textit{Lasker}) if every ideal of $R$ is an intersection of
finitely many primary ideals. It was shown in Theorem 5.7 of \cite{R-2},
that an ideal $I$ is a primary ideal of $L$ if and only if $I$ is a power of
a prime ideal which, by Theorem 3.5, is equivalent to being strongly
irreducible. Thus Theorem \ref{Every I is int irreducibles} gives a complete
description of Leavitt path algebras which are Laskerian. }
\end{remark}

Next we consider when every ideal of a Leavitt path algebra $L_{K}(E)$ is
strongly irreducible.

\begin{theorem}
\label{Every Id is str. Irred}The following are equivalent for any Leavitt
path algebra $L=L_{K}(E)$:

\begin{enumerate}
\item Every ideal of $L$ is strongly irreducible;

\item All the ideals of $L$ are graded and form a chain under set inclusion;

\item The graph $E$ satisfies Condition (K) and the admissible pairs $(H,S)$
form a chain under the partial ordering of the admissible pairs;

\item Every ideal of $L$ is a prime ideal.
\end{enumerate}
\end{theorem}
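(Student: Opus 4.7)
The plan is to prove the four equivalences by the cycle $(4) \Rightarrow (1) \Rightarrow (2) \Rightarrow (3) \Rightarrow (4)$. The implication $(4) \Rightarrow (1)$ is immediate since every prime ideal is strongly irreducible.

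For $(1) \Rightarrow (2)$, I would first establish the chain property by a self-application of strong irreducibility: given any two ideals $A, B$, the intersection $A \cap B$ is itself strongly irreducible by hypothesis, so from the trivial containment $A \cap B \subseteq A \cap B$ one deduces $A \subseteq A \cap B$ or $B \subseteq A \cap B$, i.e., $A$ and $B$ are comparable. To show every ideal is graded, I would argue by contradiction: suppose $L$ contains a non-graded ideal. Then by Theorem \ref{strongly irred} together with Lemma \ref{Irreducible Ideals of L}, $L$ must contain a non-graded prime ideal $P = I(H, B_H) + \langle p(c) \rangle$, where $c$ is a cycle without exits in $E\setminus(H,B_H)$ and $p(x)$ is an irreducible polynomial in $K[x,x^{-1}]$. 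Since $K[x,x^{-1}]$ contains infinitely many pairwise non-associate irreducibles over any field (via the usual Euclid-style argument applied to $K[x]$, restricted to polynomials of non-zero constant term), I can choose a second irreducible $q(x)$ not associate to $p(x)$. By Theorem \ref{Prime ideals}, the ideal $P' = I(H, B_H) + \langle q(c) \rangle$ is then also a prime ideal, because the graphical hypotheses $(E\setminus(H,B_H)$ downward directed, $c$ without exits, etc.$)$ transfer verbatim from $P$. But modulo $I(H, B_H)$, the ideals $P$ and $P'$ correspond under Proposition \ref{M=<c^0>} to the distinct prime ideals $\langle p(x)\rangle$ and $\langle q(x)\rangle$ of the PID $K[x,x^{-1}]$, which are pairwise incomparable, contradicting the chain property already established.

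For $(2) \Rightarrow (3)$, I would invoke the well-known fact (see \cite{AAS}) that every ideal of $L_K(E)$ is graded if and only if $E$ satisfies Condition (K), together with the order-preserving bijection between graded ideals and admissible pairs; the chain condition on ideals then translates immediately to a chain condition on the admissible pairs. For $(3) \Rightarrow (4)$, Condition (K) again forces every ideal to be graded, and the chain on admissible pairs makes the full ideal lattice totally ordered. Given any ideal $I$ and ideals $A, B$ with $AB \subseteq I$, the chain property gives (say) $A \subseteq B$, whence Lemma \ref{Property of graded ideal} yields $AB = A \cap B = A \subseteq I$, proving that $I$ is prime.

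The main obstacle, in my view, is the ``every ideal is graded'' half of $(1) \Rightarrow (2)$: to rule out any non-graded ideal, one must exhibit two incomparable non-graded primes sharing the same graded part, and this requires knowing that $K[x,x^{-1}]$ always has at least two non-associate irreducibles with non-zero constant term — a fact that is true over every field but worth recording explicitly, since it is what makes the chain-plus-non-graded-prime scenario collapse.
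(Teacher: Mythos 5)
Your proof is correct and follows essentially the same route as the paper: the chain property comes from applying (strong) irreducibility to $A\cap B$ itself, and gradedness is forced by exhibiting two incomparable non-graded primes $I(H,B_{H})+\langle p(c)\rangle$ and $I(H,B_{H})+\langle q(c)\rangle$ built from two non-associate irreducibles with non-zero constant term, exactly as in the paper's argument. The only cosmetic differences are that you obtain the non-graded prime from the prime-power characterization (Theorem \ref{strongly irred}) rather than constructing it from the cycle data of an arbitrary non-graded ideal, and that you close the cycle via $(3)\Rightarrow(4)$ instead of $(2)\Rightarrow(4)$; both reduce to the same computation $AB=A\cap B$ for graded ideals in a chain.
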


\begin{proof}
Assume (1). Let $A,B$ be any two ideals of $L$. Now $I=A\cap B$ is strongly
irreducible and hence irreducible. So $I=A$ or $I=B$. This means $A=A\cap
B\subseteq B$ or $B=A\cap B\subseteq A$. Thus ideals of $L$ form a chain
under set inclusion. Suppose, by way of contradiction, that $L$ contains a
non-graded ideal $J$. By Theorem \ref{genators of ideal}, $J=I(H,S)+\Sigma
_{i\in X}<f_{i}(c_{i})>$, where $X$ is a non-empty index set, for each $i\in
X$, $c_{i}$ is a cycle without exits in $E\backslash(H,S)$ based at a vertex 
$c_{i}$ and $f_{i}(x)\in K[x]$ with a non-zero constant term which, without
loss of generality, we may assume to be $1$. Then $v_{i}$ will be the
non-zero constant term of $f_{i}(c_{i})$. It is clear that, for a fixed $%
i\in X$, $H_{i}=\{u\in E^{0}:u\ngeq v_{i}\}$ is a hereditary saturated set
and $(E\backslash(H_{i},B_{H_{i}}))^{0}=E^{0}\backslash H_{i}$ is downward
directed. Then, for two distinct non-conjugate irreducible polynomials $%
p(x),q(x)\in K[x,x^{-1}]$, we have $P=I(H_{i},B_{H_{i}})+<p(c_{i})>$ and $%
Q=I(H_{i},B_{H_{i}})+<q(c_{i})>$ are prime ideals of $L$ such that neither
contains the other. To see this, note that in $\bar{L}=L/I(H_{i},B_{H_{i}})$%
, $M=<c_{i}^{0}>$ contains both 
\begin{equation*}
\bar{P}=P/(H_{i},B_{H_{i}})=<p(c_{i})>\text{ and }\bar{Q}%
=Q/(H_{i},B_{H_{i}})=<q(c_{i})>.
\end{equation*}
By Proposition \ref{M=<c^0>}, $M\cong M_{\Lambda}(K[x,x^{-1}])$ and that the
ideal lattices of $M_{\Lambda}(K[x,x^{-1}])$ and $K[x,x^{-1}]$ are
isomorphic. Consequently, $\bar{P}$ and $\bar{Q}$ are maximal ideals of $M$
and hence neither contains the other. This contradiction shows that all the
ideals of $L$ must be graded. This proves (2).

Now (2) implies (4), because, if $A,B,I$ are ideals of $L$ such that $%
AB\subseteq I$. $\ $Since ideals of $L$ are all graded, $AB=A\cap B$ (Lemma %
\ref{Property of graded ideal}). Thus $A\cap B\subseteq I$. Since $A\cap B=A$
or $B$, we have $A\subseteq I$ or $B\subseteq I$.

Also (4) implies (1), since a prime ideal is always strongly irreducible.

Finally, (2) $\Leftrightarrow $ (3), since, by (Proposition 2.9.9, \cite{AAS}%
), the graph $E$ satisfies Condition (K) if and only if every ideal of $L$
is graded and, by (Theorem 2.5.8, \cite{AAS}), the map $I(H,S)\mapsto (H,S)$
is an order preserving bijection between graded ideals of $L$ and admissible
pairs $(H,S)$.
\end{proof}

\begin{example}
\label{Ideals form a chain} Suppose $E$ is the following graph

\bigskip

\begin{equation*}
\xymatrix{ \cdots \ar[r] & \bullet^{v_3} \ar@(ul,ur) \ar@(dr,dl) \ar[r]  & \bullet^{v_2} \ar@(ul,ur) \ar@(dr,dl) \ar[r] &
\bullet^{v_1} \ar@(ul,ur) \ar@(dr,dl)}
\end{equation*}

\bigskip

\noindent Then clearly $E$ satisfies Condition (K) and is
row-finite. In particular, $B_{H}$ is empty for every hereditary saturated
subset of vertices $H\subseteq E^{0}$. Moreover, the non-empty proper
hereditary saturated subsets of vertices in $E$ are precisely the sets of
the form $\{v_{i}:i\geq n\}$, where $n$ is a positive integer and these form
an ascending chain under set inclusion. It follows that the admissible pairs 
$(H,S)$ form a chain under the partial order of the admissible pairs as
defined in Section 2. Thus the graph $E$ satisfies Condition (3) of Theorem %
\ref{Every Id is str. Irred}.
\end{example}

\section{Strongly Prime Ideals of Leavitt path algebras}

\noindent As noted earlier, a prime ideal $P$ containing the intersection ${%
\displaystyle\bigcap\limits_{i=1}^{n}}A_{i}$ of finitely many ideals\ $A_{i}$
will contain one of the ideals $A_{i}$. But, for the intersection of
infinitely many ideals, the corresponding statement does not hold. For
example, in the ring $\mathbb{Z}$ of integers, the zero ideal $\{0\}={%
\displaystyle\bigcap\limits_{n=1}^{\infty}}2^{n}\mathbb{Z}$, but $2^{n}%
\mathbb{Z}\neq\{0\}$. In \cite{JOT}, Jayaram, Oral and Tekir study the
ideals of a commutative ring having the desired property for infinite
intersections and call them strongly prime ideals.

\begin{definition}
(\cite{JOT}) An ideal $P$ of a ring $R$ is called a strongly prime ideal if $%
P\supseteq{\displaystyle\bigcap\limits_{i\in X}}A_{i}$, where $X$ is an
arbitrary index set and the $A_{i}$ are ideals of $R$ implies that $%
P\supseteq A_{i}$ for some $i\in X$. A ring $R$ is called strongly
zero-dimensional, if every prime ideal of $R$ is strongly prime.
\end{definition}

In this section we characterize the strongly prime ideals of a Leavitt path
algebra $L$. We give necessary and sufficient conditions under which a given
ideal $I$ of $L$ can be factored as a product of strongly prime ideals. We
describe both algebraically and graphically when every ideal of $L$ admits a
factorization as a product of strongly prime ideals. Finally, Leavitt path
algebras which are strongly zero-dimensional are fully characterized.

\begin{remark}
\textrm{In their definition of a strongly prime ideal $P$ in \cite{JOT}, the
authors assume to start with that the ideal $P$ is a prime ideal satisfying
the stated property. As is clear from our definition above, we do not assume
a priori that $P$ is a prime ideal. We will show in Theorem \ref{Strongly
primes of LPAs} that, for Leavitt path algebras, such an ideal $P$ is always
a prime ideal. }
\end{remark}

Clearly a prime (and hence a strongly prime) ideal of a ring is strongly
irreducible. But a strongly irreducible ideal need not be strongly prime.
For instance, in the ring $\mathbb{Z}$ of integers, $2\mathbb{Z}$ is
strongly irreducible (and also a prime ideal), but $2\mathbb{Z}$ is not
strongly prime since ${\displaystyle\bigcap\limits_{n=1}^{\infty}}3^{n}%
\mathbb{Z}=0\in$ $2\mathbb{Z}$, but $3^{n}\mathbb{Z}\nsubseteq2\mathbb{Z}$
for any $n\geq1$.

We begin with some preliminary results.

\begin{lemma}
\label{No PID} No ideal in a principal ideal domain $R$ is strongly prime.
\end{lemma}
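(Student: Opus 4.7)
The plan is to show, for every proper ideal $I$ of the PID $R$, that the strongly prime condition must fail, by exhibiting a countable descending chain of ideals whose intersection lies in $I$ but no member of which is contained in $I$. This directly contradicts the definition of strong primeness.

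Write $I=(a)$ with $a\in R$. The first step is to select a prime element $q\in R$ that does not divide $a$: if $a=0$ any non-zero prime works, while if $a$ is a non-zero non-unit the existence of such a $q$ rests on the PID having infinitely many pairwise non-associate primes (automatic in the cases of interest, in particular $R=K[x,x^{-1}]$) combined with the fact that $a$ has only finitely many prime divisors up to associates. The second step is to verify $\bigcap_{n\geq 1}(q^n)=(0)$, which follows at once from unique factorization in the PID: any non-zero element of $R$ has a finite prime factorization, so it cannot be divisible by $q^n$ for arbitrarily large $n$. Consequently $\bigcap_{n\geq 1}(q^n)=(0)\subseteq I$. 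The third step is the observation that $q\nmid a$ forces $q^n\notin (a)=I$ for every $n\geq 1$, so $(q^n)\not\subseteq I$. The countable family $\{(q^n)\}_{n\geq 1}$ therefore witnesses the failure of $I$ to be strongly prime.

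The main (and really only) delicate point is the choice of the prime $q$; once that is made, the rest is a routine consequence of unique factorization. Beyond the PID setting this is exactly what breaks down: in a DVR only one prime exists up to associates and the argument collapses, which is why the statement should be read in the natural setting of PIDs with infinitely many primes, covering the ring $K[x,x^{-1}]$ relevant to the Leavitt path algebra applications that follow.
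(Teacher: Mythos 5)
Your proof is correct and follows essentially the same route as the paper's: choose a prime $q$ with $q\nmid a$, note that $\bigcap_{n\geq 1}(q^{n})=(0)\subseteq (a)$ by unique factorization, and that no $(q^{n})$ is contained in $(a)$. You are also right to flag the one point the paper leaves tacit: such a $q$ exists only when $R$ has infinitely many pairwise non-associate primes (indeed the lemma as literally stated fails for a DVR, where the maximal ideal \emph{is} strongly prime), but this hypothesis does hold for $K[x,x^{-1}]$, the only ring to which the lemma is applied in the sequel.
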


\begin{proof}
Let $I=<a>$ be a non-zero ideal of $R$. Choose a prime (equivalently;
irreducible) element $p$ in $R$ such that $p\nmid a$. Now ${\displaystyle%
\bigcap\limits_{n=1}^{\infty}} <p^{n}>=\{0\}\subseteq<a>$, but $%
<p^{n}>\nsubseteq<a>$ for any $n$, since otherwise, $p^{n}=ab$ for some $%
b\in R$ and by the uniqueness of prime factorization, $a=p^{k}$ for some $k$%
, a contradiction. Also the zero ideal $<0>$ of $R$ cannot be strongly
prime, since for any prime element $p$ in $R$, ${\displaystyle%
\bigcap\limits_{n=1}^{\infty}} <p^{n}>=<0>$.
\end{proof}

\begin{corollary}
\label{No K[x,x^-1]} No ideal in the ring $M_{\Lambda}(K[x,x^{-1}])$ is
strongly prime.
\end{corollary}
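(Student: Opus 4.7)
The plan is to reduce the statement to Lemma~4.4 by transferring the problem to $K[x,x^{-1}]$, which is a principal ideal domain (it is a localization of the Euclidean domain $K[x]$, hence itself a PID). The vehicle for this reduction is Proposition~3.3(iii), which gives the lattice isomorphism $A\longmapsto M_{\Lambda}(A)$ between ideals of $K[x,x^{-1}]$ and ideals of $M_{\Lambda}(K[x,x^{-1}])$.

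First I would record the observation that the map $A\longmapsto M_{\Lambda}(A)$ not only preserves finite lattice operations (as asserted by the cited proposition) but also arbitrary intersections: a matrix has all of its entries in $\bigcap_{i\in X}A_{i}$ precisely when all of its entries lie in every $A_{i}$, so
\[
M_{\Lambda}\Bigl(\bigcap_{i\in X}A_{i}\Bigr)=\bigcap_{i\in X}M_{\Lambda}(A_{i}).
\]
This compatibility with arbitrary intersections is exactly what is needed to transport the ``strongly prime'' condition through the isomorphism.

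Next I would argue by contradiction. Let $I$ be an ideal of $M_{\Lambda}(K[x,x^{-1}])$ and write $I=M_{\Lambda}(A)$ for the corresponding ideal $A$ of $K[x,x^{-1}]$. Suppose $I$ is strongly prime. Given any family $\{A_{i}\}_{i\in X}$ of ideals of $K[x,x^{-1}]$ with $A\supseteq\bigcap_{i\in X}A_{i}$, applying $M_{\Lambda}(-)$ yields $I=M_{\Lambda}(A)\supseteq\bigcap_{i\in X}M_{\Lambda}(A_{i})$. Strong primeness of $I$ then gives $M_{\Lambda}(A)\supseteq M_{\Lambda}(A_{j})$ for some $j\in X$, and the order-preserving bijection in Proposition~3.3(iii) forces $A\supseteq A_{j}$. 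Hence $A$ would be a strongly prime ideal of $K[x,x^{-1}]$, contradicting Lemma~4.4.

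There is essentially no obstacle here; the only point that deserves a brief remark is the extension of the lattice isomorphism to infinite intersections, since a general lattice isomorphism need not preserve arbitrary meets, but the explicit form of $M_{\Lambda}(-)$ makes this immediate.
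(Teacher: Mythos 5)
Your proposal is correct and follows essentially the same route as the paper: transfer the question to $K[x,x^{-1}]$ via the lattice isomorphism of Proposition~3.3(iii) and invoke Lemma~4.4. Your extra remark about arbitrary intersections is a reasonable precaution, though it is automatic here: the map is an order isomorphism between complete ideal lattices, and such maps preserve arbitrary meets, which is all the paper's one-line proof implicitly relies on.
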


\begin{proof}
By Proposition 3.3, the ideal lattices of $M_{\Lambda}(K[x,x^{-1}])$ and the
principal ideal domain $K[x,x^{-1}]$ are isomorphic. Then the result follows
from Lemma \ref{No PID}.
\end{proof}

The next Proposition states that a strongly prime ideal of $L=L_{K}(E)$ must
be a graded ideal.

\begin{proposition}
\label{Non-graded prime power not} If $P$ is a non-graded prime ideal of a
Leavitt path algebra $L_{K}(E)$, then, for any $n\geq1$, $P^{n}$ is not a
strongly prime ideal.
\end{proposition}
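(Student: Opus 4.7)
The plan is to exhibit, for the specific ideal $P^n$, an infinite family of ideals $\{A_k\}_{k\geq 1}$ of $L$ whose intersection sits inside $P^n$ while no single $A_k$ does. The witnesses will be manufactured in the principal ideal domain $K[x,x^{-1}]$ and then transported into $L$ through the isomorphism $M\cong M_{\Lambda}(K[x,x^{-1}])$ of Proposition \ref{M=<c^0>} and the quotient map modulo the graded part of $P^n$.

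By Theorem \ref{Prime ideals}, write $P=I(H,B_H)+\langle p(c)\rangle$, where $c$ is a cycle without exits in the downward-directed quotient graph $E\setminus(H,B_H)$ and $p(x)\in K[x,x^{-1}]$ is irreducible. As in the proof of Theorem \ref{strongly irred}, combining Proposition \ref{M=<c^0>}(ii) with Lemma \ref{Property of graded ideal} yields $P^n=I(H,B_H)+\langle p^n(c)\rangle$. Passing to $\bar L=L/I(H,B_H)\cong L_K(E\setminus(H,B_H))$, we have $\bar P^n=\langle p^n(c)\rangle\subseteq M:=\langle\{c^0\}\rangle$. Now choose any irreducible polynomial $q(x)\in K[x,x^{-1}]$ not associate to $p(x)$; such a $q$ exists because $K[x]$ has infinitely many non-associate irreducibles of positive degree, and those with nonzero constant term remain non-associate irreducibles in $K[x,x^{-1}]$.

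Define $\bar A_k=\langle q^k(c)\rangle\subseteq\bar L$ for $k\geq 1$; each $\bar A_k$ lies in $M$ by Lemma \ref{Ideal in downward dir graph with NE cycle}. Under the lattice isomorphism of Proposition \ref{M=<c^0>}(iii), $\bar A_k$ corresponds to $\langle q^k(x)\rangle$ in $K[x,x^{-1}]$, and the correspondence $A\mapsto M_{\Lambda}(A)$ preserves arbitrary intersections (a matrix lies in $M_{\Lambda}(I)$ iff all its entries do). Since $\bigcap_{k\geq 1}\langle q^k(x)\rangle=\{0\}$ in the PID $K[x,x^{-1}]$, this gives $\bigcap_{k}\bar A_k=\{0\}\subseteq\bar P^n$. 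Yet $\bar A_k\nsubseteq\bar P^n$ for every $k$, since $\langle q^k(x)\rangle\subseteq\langle p^n(x)\rangle$ would force $p(x)\mid q^k(x)$, contradicting that $p$ and $q$ are non-associate irreducibles in the UFD $K[x,x^{-1}]$. Lift back using $\pi\colon L\to\bar L$: set $A_k=\pi^{-1}(\bar A_k)$, an ideal of $L$ containing $I(H,B_H)$. Then
\[
\bigcap_k A_k=\pi^{-1}\!\Bigl(\bigcap_k\bar A_k\Bigr)=\pi^{-1}(\{0\})=I(H,B_H)\subseteq P^n,
\]
while $A_k\nsubseteq P^n$ for each $k$ (otherwise $\bar A_k\subseteq\bar P^n$), so $P^n$ is not strongly prime.

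The one technical check to pin down carefully is the identity $P^n=I(H,B_H)+\langle p^n(c)\rangle$, which rests on the graded-ideal absorption in Lemma \ref{Property of graded ideal} combined with Proposition \ref{M=<c^0>}(ii); once that is in hand the argument is a near-direct transfer of the phenomenon of Lemma \ref{No PID}/Corollary \ref{No K[x,x^-1]} via the quotient-plus-matrix-ring dictionary, so I do not anticipate a serious obstacle.
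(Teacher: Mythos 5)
Your proof is correct and follows essentially the same route as the paper: reduce modulo $gr(P)=I(H,B_H)$, observe $\bar P^{\,n}=\langle p^n(c)\rangle$ sits inside $M\cong M_\Lambda(K[x,x^{-1}])$, and import the failure of strong primeness from the PID $K[x,x^{-1}]$. The only difference is that you unpack Lemma \ref{No PID} and Corollary \ref{No K[x,x^-1]} explicitly (choosing the non-associate irreducible $q$ and lifting the ideals $\langle q^k(c)\rangle$ back to $L$), whereas the paper simply cites the corollary; your version makes the lifting step more transparent but is not a different argument.
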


\begin{proof}
By Theorem 2.2, $P=I(H,B_{H})+<p(c)>$ where $H=P\cap E^{0}$, $E\backslash
(H,B_{H})$ is downward directed, $c$ is a cycle without exits in $%
E\backslash(H,B_{H})$ and $p(x)$ is an irreducible polynomial in $%
K[x,x^{-1}] $. Then $P^{n}=I(H,B_{H})+<p^{n}(c)>$. In $\overline{L_{K}(E)}%
=L_{K}(E)/I(H,B_{H})\cong L_{K}(E\backslash(H,B_{H}))$, 
\begin{equation*}
\overline{P}^{n}=P^{n}/I(H,B_{H})=<p^{n}(c)>\subseteq<c^{0}>=M.
\end{equation*}
Now, by Proposition \ref{M=<c^0>}, $M\cong M_{\Lambda}(K[x,x^{-1}])$ for a
suitable index set $\Lambda$. Then, by Corollary \ref{No K[x,x^-1]}, ($%
\overline{P}^{n}$ and hence) $P^{n}$ is not a strongly prime ideal in $%
L_{K}(E)$.
\end{proof}

We now proceed to give a complete description of the strongly prime ideals
in a Leavitt path algebra $L_{K}(E)$. In particular, it shows that a
strongly prime ideal must be a prime ideal. In its proof, we shall be using
the following definition.

\begin{definition}
\label{CSP} (\cite{ABR}) Given a graph $E$, we say that $E^{0}$ satisfies
the \textit{countable separation property} (for short, CSP), if there is a
non-empty countable subset $S$ of $E^{0}$ such that for every $u\in E^{0}$
there is a $v\in S$ such that $u\geq v$.
\end{definition}

The CSP condition turns out to be essential in the description of primitive
ideals of Leavitt path algebras as noted in the next theorem. Recall that a
ring $R$ is called a right primitive ring if there exists a faithful simple
right $R$-module. An ideal $I$ of $R$ is called a right primitive ideal if $%
R/I$ is a right primitive ring.

\begin{theorem}
\label{Primitive ideal in an LPA} (Theorem 4.3, \cite{R-1}) Let $E$ be an
arbitrary graph and let $P$ be an ideal of $L_{K}(E)$ with $P\cap E^{0}=H$.
Then $P$ is a primitive ideal if and only if $P$ satisfies one of the
following:

\begin{enumerate}
\item $P$ is a non-graded prime ideal;

\item $P$ is a graded prime ideal of the form $I(H,B_{H}\backslash\{u\});$

\item $P$ is a graded ideal of the form $I(H,B_{H})$ (where $B_{H}$ may be
empty) and $(E\backslash(H,B_{H}))^{0}=E^{0}\backslash H$ is downward
directed and satisfies Condition (L) and the CSP.
\end{enumerate}
\end{theorem}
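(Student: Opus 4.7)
The plan is to use the fact that every primitive ideal is prime, so any such $P$ must have one of the three shapes given by Theorem \ref{Prime ideals}. I would then check which of these prime ideals additionally satisfy the defining property of being primitive, namely that $L/P$ admits a faithful simple left module. Working in the quotient, it is convenient to pass to $\bar L \cong L_K(E\setminus(H,S))$ for the appropriate admissible pair $(H,S)$, and to ask when either $\bar L$ itself is primitive or when the further quotient by a non-graded piece $\langle p(c)\rangle$ is primitive.

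For the non-graded case $P = I(H,B_H) + \langle p(c)\rangle$ of Theorem \ref{Prime ideals}(3), the downward directedness of $E\setminus(H,B_H)$ together with the fact that $c$ is a cycle without exits forces $v \geq c^0$ for every vertex $v$ in the quotient graph (as in Lemma \ref{Ideal in downward dir graph with NE cycle}). Using Proposition \ref{M=<c^0>}, the image of $M = \langle c^0 \rangle$ in $\bar L/\langle p(c)\rangle$ becomes $M_\Lambda(K[x,x^{-1}]/\langle p(x)\rangle)$, a simple ring, from which one deduces that $L/P$ is in fact simple and hence primitive; this yields item (1). For the prime ideal $P = I(H, B_H\setminus\{u\})$ of Theorem \ref{Prime ideals}(2), the sink $u'$ in the quotient graph gives a simple projective left ideal $L_K(E\setminus(H,B_H\setminus\{u\}))\cdot u'$, and the universal domination $v\geq u$ for all $v\in E^0\setminus H$ ensures faithfulness, yielding item (2).

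The delicate case is $P = I(H,B_H)$ graded prime with downward directed quotient $F = E\setminus(H,B_H)$; one must show that $L_K(F)$ is primitive exactly when $F^0$ satisfies Condition~(L) and the CSP. Necessity of Condition~(L) is quick: a cycle without exits in $F$ would produce a nonzero ideal isomorphic to $M_\Lambda(K[x,x^{-1}])$, whose ideal lattice (by Proposition \ref{M=<c^0>}(iii)) is that of a PID and contains no primitive ideals, so the ambient ring could not be primitive either. Necessity of CSP is proved contrapositively: a faithful simple left module $N$ and a nonzero $n\in N$ produce countably many vertices acting nontrivially on $n$, which, combined with downward directedness, supply a countable separating subset of $F^0$.

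The main obstacle, and the technical heart of the argument, is the sufficiency direction: constructing a faithful simple left $L_K(F)$-module from downward directedness, Condition~(L) and CSP. The natural plan is to take a CSP-witnessing countable set $\{v_1, v_2, \ldots\}$, refine it via downward directedness into a descending chain $w_1 \geq w_2 \geq \cdots$, and realize the desired module as a direct limit built from the left ideals $Lw_n$ (with a branching modification when cycles are present). Condition~(L) is then invoked to force simplicity of the colimit (no cycle can act as a unit on a proper submodule), while the chain together with CSP forces faithfulness, since every vertex of $F^0$ dominates some $w_n$ and therefore acts nontrivially. Checking that this construction produces a module that is simultaneously simple and faithful is the step I expect to be hardest, and it is what genuinely separates item~(3) from the more direct items (1) and~(2).
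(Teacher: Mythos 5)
First, a point of context: the paper does not prove this statement at all --- it is imported verbatim as Theorem 4.3 of \cite{R-1}, so there is no in-paper argument to compare yours against, and what you are sketching is a reproof of a theorem whose full argument occupies a substantial part of \cite{R-1} together with \cite{ABR}. Your skeleton (primitive implies prime, then test each of the three shapes from Theorem \ref{Prime ideals} for primitivity) is the right one, but it contains one step that fails and two steps that are only gestured at. The failing step is in the non-graded case: from the simplicity of $M_{\Lambda}(K[x,x^{-1}]/\langle p(x)\rangle)$ you conclude that $L/P$ is simple. This is false in general, because $M=\langle\{c^{0}\}\rangle$ need not be all of $\bar{L}=L_{K}(E\backslash(H,B_{H}))$: the ideal generated by $\{c^{0}\}$ only reaches the saturated closure of $\{c^{0}\}$, and (as the paper itself observes in the proof of Proposition \ref{graded-simple}) an infinite emitter feeding into $c$ lies outside that closure. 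Concretely, take one vertex $u$ emitting infinitely many edges to $v$ and a single loop $c$ at $v$; then $P=\langle p(c)\rangle$ is a non-graded prime ideal, yet $M/P$ is a nonzero \emph{proper} ideal of $L/P$, so $L/P$ is not simple. The conclusion that $P$ is primitive survives, but it needs a different mechanism: $L/P$ is a prime ring containing the nonzero ideal $M/P\cong M_{\Lambda}(K[x,x^{-1}]/\langle p(x)\rangle)$, which \emph{is} a primitive ring, and a nonzero ideal of a prime ring is primitive if and only if the ambient ring is.

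The two unfilled steps are both in item (3), and they are the heart of the theorem. For the necessity of the CSP, ``countably many vertices acting nontrivially on $n$'' does not deliver what you need: since $L$ has local units, $n=\varepsilon n$ for a finite sum of vertices $\varepsilon$, so the set of vertices $v$ with $vn\neq 0$ is \emph{finite}, and it does not separate $E^{0}\backslash H$; the actual extraction of a countable separating set from a faithful simple module is a delicate argument in \cite{ABR} and cannot be compressed to that sentence. For sufficiency, the plan of building a chain $w_{1}\geq w_{2}\geq\cdots$ from the CSP set and taking a direct limit of the $Lw_{n}$ is indeed the shape of the known construction, but as written there is no argument for why the colimit is simple (the role you assign to Condition (L) --- ``no cycle can act as a unit on a proper submodule'' --- is not a proof) nor for why it is faithful; one has to quotient by a carefully chosen maximal left ideal and verify it contains no vertex. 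In short: the roadmap is accurate, but one signpost points the wrong way and the two hardest legs of the journey are left untravelled.
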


\begin{definition}
\label{strong CSP} (i) We say $E^{0}$ satisfies the \textit{strong CSP}, if $%
E^{0}$ satisfies the CSP such that the corresponding non-empty countable set 
$S$ of vertices is contained in every non-empty hereditary saturated subset
of $E^{0}$.

(ii) A primitive ideal $P$ of $L_{K}(E)$ with $gr(P)=I(H,S)$ is called 
\textit{strongly primitive} if $(E\backslash(H,S))^{0}$ satisfies the strong
CSP.

(iii) We say $L_{K}(E)$ is strongly primitive, if $L_{K}(E)$ is a primitive
ring such that $E^{0}$ satisfies the strong CSP.
\end{definition}

\begin{example}
\textrm{Any non-graded prime ideal $P$ of a Leavitt path algebra $L$ is
strongly primitive. Because, by Theorem \ref{Prime ideals}, $%
P=I(H,B_{H})+<p(c)>$ where $c$ is a cycle without exits in the downward
directed set $(E\backslash(H,B_H))^{0}$ based at a vertex $v$. Then $%
(E\backslash (H,B_{H}))^{0}$ satisfies the strong CSP with respect to $\{v\}$%
, thus making $I$ strongly primitive. Also any graded prime ideal of the
form $I(H,B_{H}\backslash\{u\})$ is strongly primitive since it is primitive
(\cite{R-1}Theorem 4.3) and $(E\backslash(H,B_{H}\backslash\{u\}))^{0}$
satisfies the strong CSP with respect to $\{u^{\prime}\}$. }
\end{example}

\begin{lemma}
Every strongly prime ideal of a Leavitt path algebra is graded prime.
\end{lemma}

\begin{proof}
Let $I$ be a strongly prime ideal of a Leavitt path algebra $L$. In
particular, $I$ is irreducible and so, by Lemma \ref{Irreducible Ideals of L}%
, $I=P^{n}$, a power of a prime ideal $P$. By Proposition \ref{Non-graded
prime power not}, $P$ must be a graded ideal and so $I=P^{n}=P$ is a graded
prime ideal.
\end{proof}

\begin{theorem}
\label{Strongly primes of LPAs}The following properties are equivalent for
an ideal $I$ of $L=L_{K}(E)$ with $I\cap E^{0}=H$:

\begin{enumerate}
\item $I$ is a strongly prime ideal;

\item $I$ is a graded and strongly primitive ideal;

\item $I=I(H,S)$ is graded and $E\backslash(H,S)$ is downward directed
satisfying Condition (L) and the strong CSP.
\end{enumerate}
\end{theorem}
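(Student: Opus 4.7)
The plan is to take the equivalence $(2) \Leftrightarrow (3)$ essentially for free and then prove the loop $(3) \Rightarrow (1) \Rightarrow (3)$. For the easy part, Definition \ref{strong CSP} and Theorem \ref{Primitive ideal in an LPA} together show that a graded strongly primitive ideal has either the shape $I(H, B_H\backslash\{u\})$ with $u'$ a sink in the quotient and every quotient vertex $\geq u'$, or the shape $I(H, B_H)$ with $E\backslash(H, B_H)$ downward directed, satisfying Condition (L) and the strong CSP; in the first case Condition (L) is automatic because any cycle vertex $v$ satisfies $v \geq u'$ with $u'$ off the cycle, providing an exit, and $\{u'\}$ is visibly a strong CSP witness. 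This matches condition (3) exactly.

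For $(3) \Rightarrow (1)$, I would pass to $\bar L := L/I \cong L_K(E\backslash(H,S))$ and suppose $\bigcap_i \bar A_i = 0$ with every $\bar A_i \neq 0$. Condition (L) in the quotient graph (via the Cuntz--Krieger uniqueness theorem) forces each $\bar A_i$ to contain a vertex, so $\bar A_i \cap (E\backslash(H,S))^0$ is a non-empty hereditary saturated subset. The strong CSP now supplies a non-empty countable set $S$ sitting inside every non-empty hereditary saturated subset, whence $S \subseteq \bigcap_i \bar A_i = 0$, contradicting $S \neq \emptyset$. So some $\bar A_i = 0$, proving that $I$ is strongly prime.

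For $(1) \Rightarrow (3)$, strongly prime immediately yields prime, and Proposition \ref{Non-graded prime power not} applied with $n=1$ says no non-graded prime ideal is strongly prime, so $I$ must be a graded prime. To obtain primitivity I would use that the Jacobson radical of any Leavitt path algebra vanishes: $\bigcap\{P/I : P \supseteq I \text{ primitive in } L\} = 0$ in $L/I$, so $\bigcap_P P = I$, and strong primeness collapses this family, forcing one such $P$ to coincide with $I$. Theorem \ref{Primitive ideal in an LPA} then places $I$ in the form $I(H, B_H \backslash \{u\})$ (with $u'$ a sink dominated by every quotient vertex) or $I(H, B_H)$ (with the CSP); the first form immediately satisfies (3) with strong CSP witness $\{u'\}$.

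The main obstacle is the case $I = I(H, B_H)$: promoting the ordinary CSP to strong CSP. My plan is to invoke strong primeness on the family of all non-zero ideals of $\bar L$ to conclude that their intersection $J_0$ is non-zero, and then use Condition (L) to force $K_0 := J_0 \cap E^0$ to be non-empty. Since $J_0$ sits inside $\langle H' \rangle$ for every non-empty hereditary saturated $H'$, intersecting with $E^0$ gives $K_0 \subseteq H'$ for all such $H'$. The key geometric step is that $K_0$ is already a CSP witness: for any vertex $u$ and any $v \in K_0$, downward directedness produces $w$ with $u \geq w$ and $v \geq w$, and hereditariness of $K_0$ puts $w$ in $K_0$, so $u \geq w \in K_0$. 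To produce a \emph{countable} witness inside $K_0$, I would take the countable CSP witness $T = \{v_i\}$ given by the primitive hypothesis, choose $w_i \in K_0$ with $v_i \geq w_i$ via the preceding observation applied to $v_i$, and verify that $\{w_i\}$ is a countable subset of $K_0$ and a CSP witness (given $u$, pick $v_i \in T$ with $u \geq v_i$, so $u \geq v_i \geq w_i$). This delivers the strong CSP and completes the argument.
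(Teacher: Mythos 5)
Your overall architecture matches the paper's proof almost exactly: the Jacobson-radical argument forcing a strongly prime graded ideal to be primitive, the observation that the intersection of all non-empty hereditary saturated subsets of the quotient graph must be non-empty (else strong primeness of $\{0\}$ in $L/I$ fails), the use of downward directedness plus hereditariness to pull the countable CSP witness down into that intersection, and, in the converse direction, Condition (L) forcing every non-zero ideal of the quotient to contain a vertex so that the strong CSP witness lies in all of them. Your variant of the promotion-to-strong-CSP step (anchoring at $K_0=J_0\cap E^0$ rather than at the paper's $X\cap C$) is a cosmetic difference only, and your explicit verification of Condition (L) and downward directedness in the $I(H,B_H\backslash\{u\})$ case is a point the paper leaves implicit.

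There is, however, one genuine gap, right at the start of $(1)\Rightarrow(3)$: the assertion that ``strongly prime immediately yields prime.'' With the definition used here (which, as the paper stresses in a remark, does \emph{not} assume primeness a priori, unlike \cite{JOT}), the intersection condition only yields strong irreducibility: taking the two-element family $\{A,B\}$ gives ``$A\cap B\subseteq I$ implies $A\subseteq I$ or $B\subseteq I$,'' and this says nothing directly about $AB\subseteq I$, since $AB$ can be strictly smaller than $A\cap B$ for non-graded ideals. Consequently your next step, ``apply Proposition \ref{Non-graded prime power not} with $n=1$,'' has nothing to act on: that proposition only excludes $I$ being a power of a non-graded prime, and until you know $I$ is such a power you cannot invoke it. The repair is exactly the paper's route: strong primeness gives irreducibility, Lemma \ref{Irreducible Ideals of L} gives $I=P^{n}$ for a prime ideal $P$, Proposition \ref{Non-graded prime power not} then forces $P$ to be graded, and Lemma \ref{Property of graded ideal} collapses $P^{n}$ to $P$, so that $I$ is a graded prime ideal. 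With that inserted, the remainder of your argument goes through as written.
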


\begin{proof}
Assume (1). By above lemma $I$ is a graded prime ideal, say, $I=I(H,S)$
where $H=I\cap E^{0}$and $(E\backslash(H,S))^0$ is downward directed. We
claim that $I$ must be a primitive ideal of $L$. Because, otherwise, $\{0\}$
is not a primitive ideal of $L/I$, so the primitive ideals of $L/I$ are
non-zero and their intersection is $\{0\}$ since the Jacobson radical of $%
L/I $ is $\{0\}$ due the fact that $L/I\cong L_{K}(E\backslash(H,S))$. This
means that $I$ is the intersection of all the primitive ideals properly
containing $I$, contradicting the fact that $I$ is strongly prime. Thus $I$
must be a (graded) primitive ideal and, by Theorem \ref{Primitive ideal in
an LPA}, $(E\backslash(H,S))^0$ is downward directed, satisfies Condition
(L) and has the CSP with respect to a non-empty countable subset $C$ of
vertices. Clearly, $\{0\}$ is strongly prime in $L/I\cong
L_{K}(E\backslash(H,S))$. Let $X=\cap_{j\in J}H_{j}$ be the intersection of
all non-empty hereditary saturated subsets $H_{j}$ of vertices in $%
E\backslash(H,S)$. Now $X$ is not empty since, otherwise, $\{0\}=\cap _{j\in
J}<H_{j}>$ in $L/I$ and this would contradict the fact that $\{0\}$ is
strongly prime. We claim that, for each vertex $v\in E\backslash(H,S)$,
there is a vertex $w\in X\cap C$ such that $v\geq w$. To see this, let $u$
be a fixed vertex in $X$. By downward directness, there is a vertex $%
w^{\prime}\in E\backslash(H,S)$ such that $u\geq w^{\prime}$ and $v\geq
w^{\prime}$. By CSP, there is a $w\in C$ such that $w^{\prime}\geq w$. Since 
$X$ is hereditary, $u\geq w$ implies that $w\in X$. Thus $v\geq w\in X\cap C$
and we conclude that $E\backslash(H,S)$ satisfies the strong CSP with
respect to $X\cap C$. Consequently, $I$ is a graded and strongly primitive
ideal. This proves (2).

Assume (2). Let $I$ be a graded and strongly primitive ideal of the form $%
I(H,S)$ with $E\backslash(H,S)$ satisfying Condition (L) and the strong CSP
with respect to a non-empty countable set $C$ of vertices. To show that $I$
is strongly prime, suppose $\{A_{j}:j\in J\}$ is an arbitrary family of
ideals of $L$ such that $A_{j}\nsubseteq I$ for all $j\in J$. Consider $%
\bar {L}=L/I(H,S)\cong L_{K}(E\backslash(H,S))$. Let, for each $j$, $\bar{A}%
_{j}=(A_{j}+I(H,S))/I(H,S)$ . Identifying $L/I(H,S)$ with $%
L_{K}(E\backslash(H,S))$ under this isomorphism, observe that the non-zero
ideal $\bar{A}_{j}$ must contain a vertex. Otherwise, as $%
(E\backslash(H,S))^{0}$ is downward directed, Lemma 3.5 in \cite{R-1} will
imply that $\bar{A}_{j}=<f(c)>$ where $f(x)\in K[x]$ is a polynomial with a
non-zero constant term and $c$ is a cycle without exits in $E\backslash(H,S)$%
. This is a contradiction, since $E\backslash(H,S)$ satisfies Condition (L).
Thus $H_{j}=\bar{A}_{j}\cap(E\backslash(H,S))^{0}\neq\emptyset$ for all $%
j\in J$. By the strong CSP of $(E\backslash(H,S))^{0}$, the countable set $%
C\subseteq H_{j}$ for all $j\in J$. Consequently, $\cap_{j\in J}\bar{A}%
_{j}\supseteq<C>\neq0$. This means that $\cap_{j\in J}A_{j}\nsubseteq I(H,S)$%
. Hence $I$ is strongly prime, thus proving (1).

The equivalence of (2) and (3) follows from the definition of strongly
primitive ideal.
\end{proof}

\begin{remark}
\textrm{We shall call a ring $R$ a strongly prime ring if $\{0\}$ is a
strongly prime ideal. It is then clear from Theorem \ref{Strongly primes of
LPAs} that a Leavitt path algebra $L_{K}(E)$ is strongly prime if and only
if $E^{0}$ is downward directed satisfying the strong CSP and Condition (L)
if and only if $L_{K}(E)$ is a strongly primitive ring. }
\end{remark}

The next result describes conditions under which an ideal $I$ of a Leavitt
path algebra $L_{K}(E)$ can be factored as a product of strongly prime
ideals.

\begin{theorem}
\label{Product of str. comp. irred. ideals} The following properties are
equivalent for an ideal $I$ of a Leavitt path algebra $L=L_{K}(E)$:

\begin{enumerate}
\item $I=P_{1}\cdots P_{n}$ is a product of strongly prime ideals $P_{j}$;

\item $I=P_{1}\cap\cdots\cap P_{n}$ is an intersection of finitely many
strongly prime ideals $P_{j}$;

\item $I$ is a graded ideal, say $I=I(H,S)$ \ where $H=I\cap E^{0}$ and $%
S\subseteq B_{H}$ such that $E\backslash(H,S)^{0}$ is an irredundant union
of finitely many maximal tails $M_{1},...,M_{t}$ with each subset $M_{j}$
satisfying Condition (L) and the strong CSP with respect to a countable
subset $C_{j}\subseteq M_{j}$.
\end{enumerate}
\end{theorem}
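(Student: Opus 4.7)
The plan is to dispatch $(1)\Leftrightarrow(2)$ quickly and then focus on $(2)\Leftrightarrow(3)$ by moving to the quotient graph. By Theorem \ref{Strongly primes of LPAs}, every strongly prime ideal of $L$ is graded; Lemma \ref{Property of graded ideal}(b) then says that any product of graded ideals equals their intersection, which immediately yields $(1)\Leftrightarrow(2)$.

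For $(2)\Rightarrow(3)$, assume $I=P_1\cap\cdots\cap P_n$ with each $P_j$ strongly prime. Because each $P_j$ is graded, so is $I$; write $I=I(H,S)$. Pass to the quotient $\overline{L}=L/I\cong L_K(\overline{E})$ with $\overline{E}=E\setminus(H,S)$, and let $\overline{P}_j=P_j/I$. Each $\overline{P}_j$ is strongly prime in $\overline{L}$, so Theorem \ref{Strongly primes of LPAs} gives $\overline{P}_j=I(\overline{H}_j,\overline{S}_j)$ such that $M_j:=\overline{E}^{\,0}\setminus\overline{H}_j$ is downward directed and satisfies Condition~(L) together with the strong CSP. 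Downward directness plus hereditariness of the complement makes $M_j$ a maximal tail in $\overline{E}^{\,0}$. The relation $\bigcap_j \overline{P}_j=0$ forces $\bigcap_j \overline{H}_j=\emptyset$, hence $\overline{E}^{\,0}=\bigcup_j M_j$; discarding any redundant $M_j$ gives the desired irredundant union.

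For $(3)\Rightarrow(2)$, start with $I=I(H,S)$ and an irredundant decomposition $\overline{E}^{\,0}=M_1\cup\cdots\cup M_t$ into maximal tails satisfying Condition~(L) and the strong CSP. Because each $M_j$ is a maximal tail, $\overline{H}_j:=\overline{E}^{\,0}\setminus M_j$ is hereditary and saturated. Let $P_j$ be the graded ideal of $L$ containing $I$ for which $P_j/I=I(\overline{H}_j,B_{\overline{H}_j})$. The quotient graph $\overline{E}\setminus(\overline{H}_j,B_{\overline{H}_j})$ has $M_j$ (together with any primed breaking vertices) as its vertex set; this is downward directed and inherits Condition~(L) and the strong CSP from $M_j$, so Theorem \ref{Strongly primes of LPAs} guarantees that each $P_j$ is strongly prime. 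Finally, $\bigcap_j \overline{H}_j=\overline{E}^{\,0}\setminus\bigcup_j M_j=\emptyset$ gives $\bigcap_j(P_j/I)=0$ in $\overline{L}$, hence $\bigcap_j P_j=I$.

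The main obstacle I anticipate is the bookkeeping of admissible pairs and breaking vertices under the quotient isomorphism $L/I(H,S)\cong L_K(E\setminus(H,S))$. One must verify that the strong CSP of $M_j$, viewed inside $\overline{E}^{\,0}$, genuinely matches the strong CSP required by Theorem \ref{Strongly primes of LPAs} for the ideal $P_j$ inside the original $L_K(E)$, and that the countable subsets $C_j$ can be chosen simultaneously for all $j$ so that the intersection $\bigcap_j P_j$ is exactly $I$ and not some strictly larger graded ideal; the key point is that strong CSP demands $C_j$ sit inside every non-empty hereditary saturated subset of $M_j$, which is exactly the feature that survives quotienting by $I$.
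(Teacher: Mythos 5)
Your proposal is correct and follows essentially the same route as the paper's proof: gradedness of strongly prime ideals (Theorem \ref{Strongly primes of LPAs}) plus Lemma \ref{Property of graded ideal} for $(1)\Leftrightarrow(2)$, and passage to the quotient graph $E\backslash(H,S)$, identifying each $\bar{P}_{j}$ with $I(H_{j},B_{H_{j}})$ and each $M_{j}$ with the complementary maximal tail, for $(2)\Leftrightarrow(3)$. The bookkeeping concern you flag at the end is resolved exactly as you suggest, and the paper likewise passes to an irredundant intersection before reading off the irredundant union of maximal tails.
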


\begin{proof}
To prove the equivalence of (1) and (2), note that each $P_{j}$ is a graded
ideal, by Theorem \ref{Strongly primes of LPAs}. Then, by Lemma \ref%
{Property of graded ideal}, $P_{1}\cdots P_{n}=P_{1}\cap \cdots\cap P_{n}$.

Assume (2). If necessary remove some of the ideal $P_{j}$ and assume that $%
I=P_{1}\cap\cdots\cap P_{t}$ is an irredundant intersection of strongly
prime ideals. By Theorem \ref{Strongly primes of LPAs}, each ideal $P_{j}$
is graded, say, $P_{j}=I(H_{j},S_{j})$ such that $E\backslash (H_{j},S_{j})$
is downward directed and satisfies both Condition (L) and the strong CSP
with respect to a countable subset of vertices $C_{j}$. So $%
I=P_{1}\cap\cdots\cap P_{t}$ is graded, say $I=I(H,S)$ where $H=I\cap E^{0}$
and $S\subseteq B_{H}$. In $L/I\cong L_{K}(E\backslash(H,S)$, $\{0\}=\bar{P}%
_{1}\cap\cdots\cap\bar{P}_{t}$ is an irredundant intersection, where $\bar{P}%
_{j}=P_{j}/I(H,S)$ is strongly prime and hence a prime ideal for all $%
j=1,\ldots, t$. Let $H_{j}=\bar{P}_{j}\cap E\backslash(H,S)^{0}$. Then $%
M_{j}=[E\backslash(H,S)^{0}]\backslash H_{j}$ is a maximal tail. As $\bigcap
\limits_{j=1}^{t}H_{j}=\emptyset$, we have $E\backslash(H,S)^{0}=\bigcup
\limits_{j=1}^{t}M_{j}$ is an irredundant union of the maximal tails $M_{j}$%
. Since each $P_{j}/I(H,S)$ is strongly prime, each subset $M_{j}$ satisfies
Condition (L) and the strong CSP with respect to a countable subset $%
C_{j}\subseteq M_{j}$. This proves (3)

Assume (3). In $L_{K}(E\backslash(H,S))$, $H_{j}=(E\backslash(H,S))^{0}%
\backslash M_{j}$ is a hereditary saturated set for each $j=1,\ldots, t$ and
let $\bar{P}_{j}=I(H_{j},B_{H_{j}})$ Since ${\displaystyle\bigcap
\limits_{j=1}^{t}}H_{j}=\emptyset$, ${\displaystyle\bigcap\limits_{j=1}^{t}}%
\bar{P}_{j}=\{0\}$. Identifying $L/I$ with $L_{K}(E\backslash(H,S))$, each $%
\bar{P}_{j}=P_{j}/I$ for some ideal $P_{j}$ of $L$ containing $I$. Using
Theorem \ref{Strongly primes of LPAs} and the hypothesis on $M_{j}$, we
conclude that each $P_{j}$ is strongly prime and that $I=P_{1}\cap\cdots\cap
P_{t}$. This proves (2).
\end{proof}

\bigskip

\noindent To illustrate the above, we consider the following simple example.

\begin{example}
\textrm{Let $E$ be the following graph }

\textrm{\bigskip}

\textrm{%
\begin{equation*}
\xymatrix{ && u\ar@{->}[rr]\ar@(ur,ul) && v\ar@{<-}[rr] && w\ar@(ur,ul)\\ }
\end{equation*}
}

\textrm{\bigskip}

\textrm{\noindent Clearly, $P=<u>=<\{u,v\}>$ and $Q=<w>=<\{v,w\}>$ are
strongly prime ideals and $I=<v>=PQ=P\cap Q$.}
\end{example}

\bigskip

\begin{remark}
\label{Uniques prod. str. primes}Just as we did in Section 3, Theorem \ref%
{Uniqueness}, a natural question is to inquire about the uniqueness of
factorization of an ideal $I$ of $L$ as an irredundant product or an
irredundant intersection of finitely strongly prime ideals. Since a strongly
prime ideal is, in particular, a prime ideal, the uniqueness of such a
factorization follows from the uniqueness of factorizing an ideal $I$ as an
irredundant product/intersection of prime ideals (see e.g. Propositions 2.6
and 3.4 in \cite{EMR}).
\end{remark}

\bigskip

\noindent In the next theorem, we characterize when every ideal of a Leavitt
path algebra is a product of strongly prime ideals.

\begin{theorem}
\label{Everyone product of str.comp.irreducibles} The following properties
are equivalent for a Leavitt path algebra $L_{K}(E)$:

\begin{enumerate}
\item Every ideal of $L_{K}(E)$ is a product of strongly prime ideals;

\item Every ideal of $L_{K}(E)$ is graded, $L$ is a generalized ZPI ring,
every homomorphic image of $L$ is a Leavitt path algebra and is either
strongly prime or contains only a finite number of minimal prime ideals each
of which is strongly primitive;

\item The graph $E$ satisfies Condition (K) and for every quotient graph $%
E\backslash(H,S)$, $E\backslash(H,S)^{0}$is either downward directed
satisfying the strong CSP or is the union of finitely many maximal tails $%
S_{j}$, each of which satisfies the strong CSP with respect to a countable
subset $C_{j}\subseteq S_{j}$.
\end{enumerate}
\end{theorem}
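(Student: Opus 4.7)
The plan is to establish the cyclic implications $(1) \Rightarrow (3) \Rightarrow (2) \Rightarrow (1)$ by applying Theorem \ref{Product of str. comp. irred. ideals} uniformly to every ideal of $L$, together with the fundamental equivalence ``every ideal is graded if and only if $E$ satisfies Condition (K)'' (Proposition 2.9.9 of \cite{AAS}), and the characterization of generalized ZPI rings in Theorem \ref{Every I is int irreducibles}.

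For $(1) \Rightarrow (3)$: Since every strongly prime ideal is graded by Theorem \ref{Strongly primes of LPAs}, the hypothesis forces every ideal of $L$ to be graded (a product of graded ideals is graded by Lemma \ref{Property of graded ideal}), hence $E$ satisfies Condition (K). Applied to the graded ideal $I(H,S)$ for any admissible pair $(H,S)$, statement (3) of Theorem \ref{Product of str. comp. irred. ideals} then yields an irredundant maximal-tail decomposition of $(E\setminus(H,S))^0$ with the required strong CSP and Condition (L) on each tail. The case in which $L/I(H,S)$ is itself a strongly prime ring corresponds to the first alternative (a single downward directed tail with strong CSP), by Theorem \ref{Strongly primes of LPAs}.

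For $(3) \Rightarrow (2)$: Condition (K) gives that every ideal of $L$ is graded, so each ideal has the form $I(H,S)$ and the homomorphic image $L/I(H,S) \cong L_K(E\setminus(H,S))$ is again a Leavitt path algebra. The hypothesis on $(E\setminus(H,S))^0$, combined with Theorem \ref{Strongly primes of LPAs}, implies $L/I(H,S)$ is either strongly prime or has precisely finitely many minimal prime ideals, namely the $I(H_j, B_{H_j})/I(H,S)$ where $H_j = (E\setminus(H,S))^0 \setminus M_j$, each strongly primitive by the strong CSP hypothesis on the tail $M_j$. That $L$ is generalized ZPI follows from Theorem \ref{Every I is int irreducibles} (condition (5)), since the finite maximal-tail decomposition hypothesis supplies exactly that condition (in the downward directed alternative the decomposition is a single tail).

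For $(2) \Rightarrow (1)$: By generalized ZPI, every ideal $I$ factors as a product of prime ideals, and since every ideal is graded these prime factors are themselves graded. For each prime factor $P$, pass to $L/P$, which is a prime Leavitt path algebra by hypothesis. By the assumption on homomorphic images, either $L/P$ is strongly prime, in which case $P$ is a strongly prime ideal of $L$ directly, or $L/P$ has finitely many minimal primes each strongly primitive; but since $L/P$ is a prime ring, its unique minimal prime is $\{0\}$, and strong primitivity of $\{0\}$ in $L/P$ is exactly the condition of Theorem \ref{Strongly primes of LPAs}(3) for $P$ to be strongly prime. Either way $P$ is strongly prime, and hence $I$ is a product of strongly prime ideals.

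The main obstacle will be matching the algebraic dichotomy in (2) (``either strongly prime or finitely many strongly primitive minimal primes'') with the graphical dichotomy in (3) (``either downward directed with strong CSP or irredundant union of finitely many maximal tails, each with strong CSP''). Concretely, one must verify that the minimal primes of $L/I(H,S)$ correspond bijectively to the maximal tails of $(E\setminus(H,S))^0$ via $M_j \leftrightarrow I(H_j,B_{H_j})/I(H,S)$, and that strong primitivity of each such minimal prime is equivalent to the strong CSP hypothesis on $M_j$; this bijection and the matching of CSP conditions is the technical crux, while the remaining pieces follow mechanically from the cited theorems.
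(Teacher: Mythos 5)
Your argument is correct, and it traverses the implication cycle in the opposite direction from the paper, which proves $(1)\Rightarrow(2)\Rightarrow(3)\Rightarrow(1)$. The substantive difference is where the finite decomposition comes from: you obtain $(1)\Rightarrow(3)$ in one stroke by applying Theorem \ref{Product of str. comp. irred. ideals} to each graded ideal $I(H,S)$ (after noting that a product of graded ideals is graded, so Condition (K) holds), whereas the paper first converts the irredundant intersection of strongly prime ideals into the minimal prime ideals of $L/I$ by invoking Theorem 6.5 of \cite{R-2} and its proof, establishes the algebraic condition (2), and only then reads off the maximal tails. Your route gives a quicker path to the graphical condition and makes (2) a corollary of (3); the paper's route does the reverse. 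The one step you defer as the ``technical crux'' --- that the minimal primes of $L/I(H,S)$ are exactly the ideals $I(H_j,B_{H_j})$ attached to an irredundant maximal-tail decomposition, each strongly primitive --- is precisely the point where the paper cites Theorem 6.5 of \cite{R-2}, and it closes with a short standard argument: the $I(H_j,B_{H_j})$ are graded prime ideals with $\bigcap_j I(H_j,B_{H_j})=\{0\}$ (their vertex sets intersect in the empty set), this intersection equals their product by Lemma \ref{Property of graded ideal}, so every minimal prime contains, hence by minimality equals, some $I(H_j,B_{H_j})$, while irredundancy of the intersection forces each $I(H_j,B_{H_j})$ to be minimal; each is strongly prime, hence strongly primitive, by Theorem \ref{Strongly primes of LPAs}, since $M_j$ is downward directed, satisfies Condition (L) via Condition (K), and satisfies the strong CSP by hypothesis. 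With that inserted, and with the observation that a downward directed vertex set is itself a single maximal tail (so both alternatives in (3) feed condition (5) of Theorem \ref{Every I is int irreducibles} and yield the generalized ZPI property), your proof is complete.
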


\begin{proof}
Assume (1). First note that every ideal of $L$ is, in particular, a product
of prime ideals and so $L$ is a generalized ZPI ring. If $I$ is an arbitrary
ideal, then $I=P_{1}\cdots P_{n}$ is a product of strongly prime ideals $%
P_{j}$ implies $I=P_{1}\cap\cdots\cap P_{n}$ as the $P_{j}$ are all graded.
Thus $I$ is a graded ideal. Removing appropriate factors $P_{j}$ and
re-indexing, we may assume that $I=P_{1}\cap\cdots\cap P_{m}$ is an
irredundant intersection of graded strongly primitive ideals. If $I$ is a
prime ideal, then $I=P_{j}$ for some $j$ and so $I$ is a strongly prime
ideal of $L$. Suppose $I$ is not a prime ideal. In $\bar{L}=L/I$, $\{0\}=%
\bar{P}_{1}\cap\cdots\cap\bar{P}_{m}$ is an irredundant intersection, where $%
\bar{P}_{j}=P_{j}/I$. Since $L/I\cong L_{K}(E\backslash(H,S))$, Theorem 6.5
of \cite{R-2} and its proof implies that $L/I$ contains finitely many
minimal prime ideals $Q_{1},\ldots ,Q_{k}$ \ which are all graded and that
we have an irredundant intersection $Q_{1}\cap\cdots\cap Q_{k}=\{0\}$. By
the minimality of the prime ideals $Q_{j}$ and by irredundancy of the two
intersections, we obtain that $k=m$ and $\{Q_{1},\ldots, Q_{m}\}=\{\bar{P}%
_{1},\ldots,\bar{P}_{m}\}$ Thus $L/I$ contains finitely many minimal prime
ideals each of which is strongly primitive. This proves (2).

Assume (2). Since every ideal is graded, the graph $E$ satisfies Condition
(K) (Proposition 2.9.9, \cite{AAS}). For a given graded ideal $I(H,S)$, we
are given that $L_{K}(E\backslash(H,S))\cong L/I(H,S)$ is strongly prime or\
contains only a finite number of minimal prime ideals. In the former case,
since Condition (K) implies Condition (L), we obtain from Theorem \ref%
{Strongly primes of LPAs} that $E\backslash(H,S)$ is downward directed and
satisfies the strong CSP. On the other hand, suppose $L_{K}(E\backslash
(H,S))\cong L/I(H,S)$ contains only a finite number of minimal prime ideals $%
P_{1},\ldots, P_{k}$ all of which are graded and strongly prime. For each $%
j=1,\ldots, k$, write as $P_{j}=I(H_{j},S_{j})$. Since every non-zero prime
ideal of $L_{K}(E\backslash(H,S))$ contains one of these $P_{j}$ and since
the intersection of all the prime ideals of the Leavitt path algebra $%
L_{K}(E\backslash(H,S))$ is zero,\ we conclude that $%
\cap_{j=1}^{m}I(H_{j},S_{j})=\{0\}$. As $I(H_{j},S_{j})$ are graded strongly
primitive, $M_{j}=[E\backslash(H,S)\backslash(H_{j},S_{j})]^{0}$ is a
maximal tail satisfying the strong CSP with respect to a countable subset $%
C_{j}\subseteq S_{j}$ and $E\backslash(H,S)^{0}=\cup_{j=1}^{m}M_{j}$. This
proves (3).

Assume (3). Let $I$ be an arbitrary ideal of $L$. Condition (K) implies that 
$I$ is graded, say, $I=I(H,S)$ and that $E\backslash(H,S)$ satisfies
Condition (L). By hypothesis, $E\backslash(H,S)^{0}=\cup_{j=1}^{m}S_{j}$
where each $S_{j}$ is a maximal tail satisfying the strong CSP (and
Condition (L)). Then, for each $j=1,\cdot\cdot\cdot,m$, $H_{j}=E%
\backslash(H,S)^{0}\backslash S_{j}$ will be a hereditary saturated subset
of $E\backslash(H,S)^{0}$ and, by Theorem \ref{Strongly primes of LPAs}, $%
Q_{j}=I(H_{j},B_{H_{j}})$ will be a graded strongly prime ideal of $%
L_{K}(E\backslash(H,S))$. Clearly $\cap _{j=1}^{m}I(H_{j},B_{H_{j}})=\{0\}$.
Now $L/I(H,S)\cong L_{K}(E\backslash (H,S))$ and identify these two rings
under this isomorphism. For each $j=1,\ldots, m$, let $P_{j}$ be the ideal
of $L$ such that \ $P_{j}/I(H,S)=Q_{j}$. Then $P_{j}$ is a strongly prime
ideal and $I=\cap_{j=1}^{m}P_{j}$. As $I$ is graded, we appeal to Lemma \ref%
{Property of graded ideal} to conclude that $I=P_{1}\cdots P_{m}$, a product
of strongly prime ideals. This proves (1).
\end{proof}

For an illustration for the above theorem, let us recall the Example \ref%
{row-finite/countable graph} from previous section.

\begin{example} \rm
Let $E$ be the following graph 

\begin{tikzpicture}[x=1cm, y=1cm,every edge/.style={draw, postaction={decorate,decoration={markings,mark=at position .3 with {\arrow{>}}}}}]
\node[vertex, label = {left: $u$}] (u) at (-3, 1) {}; 
\node[vertex, label = {above: $v_1$}] (v1) at (-2,1) {};
\node[vertex, label = {above: $v_2$}] (v2) at (-1, 1) {}; 
\node[vertex, label = {above: $v_3$}] (v3) at (0, 1) {};  
\node at (.7,1) {\dots};  
\node[vertex, label = {right: $w_1$}] (w1) at (-3,2) {};
\node[vertex, label = {right: $w_2$}] (w2) at (-3, 3) {}; 
\node[vertex, label = {right: $w_3$}] (w3) at (-3, 4) {}; 
\node at (-3,4.7) {\vdots};   
\path[->] (u)  edge (v1);
\path[->] (v1)  edge (v2);
\path[->] (v2)  edge  (v3);
\path[->] (v3)  edge (0.5,1);  
\path[->] (u)  edge (w1);
\path[->] (w1)  edge (w2);
\path[->] (w2)  edge  (w3);
\path[->] (w3)  edge (-3,4.4);

\path [->] (w1) edge [in=10 ,out=80, looseness=13,->](w1);
\path [->] (w1) edge [in=260 ,out=180,looseness=13,->](w1);
\path [->] (v1) edge [in=10 ,out=80, looseness=13,->](v1);
\path [->] (v1) edge [in=260 ,out=180,looseness=13,->](v1);

\end{tikzpicture} 

\noindent The graph is row-finite and satisfies the Condition (K).
The proper  hereditary saturated subsets of this graph are precisely the
empty set $\emptyset $ and the sets $A_{1}=\{v_{n}:n\geq 1\}$; $%
A_{2}=\{v_{n}:n\geq 2\}$; $B_{1}=\{w_{n}:n\geq 1\}$; $B_{2}=\{w_{n}:n\geq 2\}
$. For each $i=1,2$, $E^{0}\setminus A_{i}$ and also $E^{0}\setminus B_{i}$
is the union of \ at most two maximal tails\ each of which satisfy strong
CSP with respect to itself. Since $A_{1}\cap A_{2}=\emptyset ,$ $%
E^{0}\setminus \emptyset =E^{0}\setminus (A_{1}\cap A_{2})=(E^{0}\setminus
A_{1})\cup (\mathrm{E^{0}\setminus A_{2}})$ is also the union of two maximal
tails satisfying the strong CSP with themselves. Thus the graph satisfies
Condition (3) of Theorem \ref{Everyone product of str.comp.irreducibles}. 
\end{example}

\bigskip

\noindent We proceed to characterize Leavitt path algebras in which each
ideal is strongly prime.

\begin{theorem}
\label{sp} The following are equivalent for any Leavitt path algebra $%
L=L_{K}(E)$:

\begin{enumerate}
\item Every ideal of $L$ is strongly prime;

\item All the ideals of $L$ are graded strongly primitive and form a chain
under set inclusion;

\item The graph $E$ satisfies Condition (K), the admissible pairs $(H,S)$
form a chain under the partial ordering of the admissible pairs and for each
admissible pair $(H,S)$, $E\backslash(H,S)$ is downward directed and
satisfies the strong CSP.
\end{enumerate}
\end{theorem}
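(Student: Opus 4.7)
The plan is to prove the cyclic chain of implications (1) $\Rightarrow$ (2) $\Rightarrow$ (3) $\Rightarrow$ (1), making heavy use of Theorem \ref{Strongly primes of LPAs} together with Theorem \ref{Every Id is str. Irred}; the latter already provides exactly the ``chain + graded'' backbone, so most of the work reduces to attaching the strong CSP to the picture.

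For (1) $\Rightarrow$ (2), I would observe that every strongly prime ideal is in particular prime and hence strongly irreducible. So the hypothesis implies that every ideal of $L$ is strongly irreducible, and Theorem \ref{Every Id is str. Irred} (conditions (1) $\Leftrightarrow$ (2)) then gives that all the ideals of $L$ are graded and form a chain under inclusion. The additional clause in (2), namely that each ideal is strongly primitive, is immediate from Theorem \ref{Strongly primes of LPAs} (condition (1) $\Rightarrow$ (2)), since strongly prime ideals are automatically graded strongly primitive.

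For (2) $\Rightarrow$ (3), the assumption that all ideals are graded and totally ordered by inclusion lets us invoke Theorem \ref{Every Id is str. Irred} (conditions (2) $\Leftrightarrow$ (3)) to conclude that $E$ satisfies Condition (K) and that the admissible pairs form a chain. It remains to extract the strong CSP and downward directedness of each quotient $E\setminus(H,S)$. For this, given an admissible pair $(H,S)$, the graded ideal $I(H,S)$ is strongly primitive by hypothesis, so by Definition \ref{strong CSP}(ii) (together with Theorem \ref{Primitive ideal in an LPA}), $(E\setminus(H,S))^{0}$ is downward directed and satisfies the strong CSP. (Condition (L) on the quotient comes for free because Condition (K) on $E$ is inherited by all quotient graphs.)

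For (3) $\Rightarrow$ (1), I would start from an arbitrary ideal $I$ of $L$. Since $E$ satisfies Condition (K), Proposition 2.9.9 of \cite{AAS} gives that $I$ is graded, so $I = I(H,S)$ for the admissible pair with $H = I\cap E^{0}$ and $S = \{v\in B_{H} : v^{H} \in I\}$. The quotient $E\setminus(H,S)$ is by hypothesis downward directed with the strong CSP, and satisfies Condition (L) because Condition (K) on $E$ passes to the quotient. Hence condition (3) of Theorem \ref{Strongly primes of LPAs} holds for $I$, so $I$ is strongly prime, establishing (1).

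The argument is essentially a bookkeeping exercise on top of the two characterization theorems already proved (Theorems \ref{Strongly primes of LPAs} and \ref{Every Id is str. Irred}). The only genuine point requiring care is verifying in (2) $\Rightarrow$ (3) that the strong CSP (not merely the CSP) is the correct condition recorded on each quotient graph; this is where the strongly \emph{primitive} hypothesis (as opposed to merely primitive) pays off, and it is the step I expect to warrant the most deliberate checking against Definition \ref{strong CSP}.
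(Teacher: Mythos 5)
Your proposal is correct and follows essentially the same route as the paper: both arguments reduce the theorem to Theorem \ref{Strongly primes of LPAs}, with the only cosmetic difference being that you obtain the ``graded + chain'' and ``Condition (K) + chain of admissible pairs'' bookkeeping by citing Theorem \ref{Every Id is str. Irred}, whereas the paper proves the chain condition directly (an intersection of two prime ideals is prime, so $AB\subseteq A\cap B$ forces comparability) and invokes Proposition 2.9.9 of \cite{AAS} and the order-preserving bijection on admissible pairs explicitly. Your handling of the strong CSP in $(2)\Rightarrow(3)$ via Definition \ref{strong CSP}(ii) is sound, though the paper gets it slightly more directly from the equivalence $(2)\Leftrightarrow(3)$ of Theorem \ref{Strongly primes of LPAs}.
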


\begin{proof}
Assume (1). Let $I$ be any ideal of $L$. Since $I$ is strongly prime, it is
graded and strongly primitive, by Theorem \ref{Strongly primes of LPAs}. To
show that the ideals form a chain, observe that every ideal of $L$ is prime,
as a strongly prime ideal is always prime. Then, for any two ideals $A,B$, $%
A\cap B$ is a prime ideal and so $AB\subseteq A\cap B$ implies that $%
A\subseteq A\cap B$ or $B\subseteq A\cap B$ and this implies that either $%
A\subseteq B$ or $B\subseteq A$. Thus the ideals of $L$ form a chain under
inclusion. This proves (2).

Assume (2). Since every ideal of $L$ is graded, the graph $E$ satisfies
Condition (K), by Proposition 2.9.9 of \cite{AAS}. Now every ideal of $L$ is
graded and so is of the form $I(H,S)$ for some admissible pair $(H,S)$.
Since the ideals of $L$ form a chain, it is clear from the order preserving
bijection between the graded ideals of $L$ and the admissible pairs that the
admissible pairs form a chain under their partial ordering. Now, for a given
admissible pair $(H,S)$, the corresponding ideal $I(H,S)$ is strongly prime
and so, by Theorem \ref{Strongly primes of LPAs}, $E\backslash(H,S)$ is
downward directed and satisfies the strong CSP. This proves (3).

Assume (3). Let $I$ be an arbitrary ideal of $L$. Since $E$ satisfies
Condition (K), $I$ is a graded ideal (Proposition 2.9.9, \cite{AAS}), say $%
I=I(H,S)$ where $H=I\cap E^{0}$. Now $E\backslash(H,S)$ satisfies Condition
(K) and hence Condition (L). Since, by supposition, it is downward directed
and satisfies the strong CSP, we appeal to Theorem \ref{Strongly primes of
LPAs} to conclude that $I$ is a strongly prime ideal, thus proving (1).
\end{proof}

\begin{example}
\textrm{Consider the graph $E$ below. 
\begin{equation*}
\begin{tikzpicture}[x=3cm, y=3cm,every edge/.style={draw,
postaction={decorate,decoration={markings,mark=at position 0.5 with
{\arrow{>}}}}}] \node[vertex] (v1) at (0,1) [label=above:$v_{1}$] {};
\node[vertex] (v2) at (1,1) [label=above:$v_{2}$] {}; \node[vertex] (v3) at
(1,0) [label=below:$v_{3}$] {}; \node[vertex] (v4) at (0,0)
[label=below:$v_{4}$] {}; \path (v2) edge (v1) (v3) edge (v2) (v4) edge (v3)
(v4) edge (v1) (v1) edge [in=180,out=90 ,looseness=20,->] (v1) (v2) edge
[in=90,out=0,looseness=20,->] (v2) (v3) edge [in=0,out=270,looseness=20,->]
(v3) (v4) edge [in=90,out=0,looseness=20,->] (v4) (v1) edge
[in=270,out=360,looseness=20,->] (v1) (v2) edge
[in=180,out=270,looseness=20,->] (v2) (v3) edge [in=90
,out=180,looseness=20,->](v3) (v4) edge [in=180,out=270,looseness=20,->](v4)
; \end{tikzpicture}
\end{equation*}
The proper hereditary saturated subset of vertices are $H_{0}=\emptyset
,H_{1}=\{v_{1}\},H_{2}=\{v_{1},v_{2}\}$ and $H_{3}=\{v_{1},v_{2},v_{3}\}$.
It is easy to see that, $E^{0}\backslash H_{0}=E^{0},E^{0}\backslash
H_{1},E^{0}\backslash H_{2},E^{0}\backslash H_{3}$ are all downward directed
and satisfy the strong CSP with respect to $\{v_{1}\},\{v_{2}\},\{v_{3}\},%
\{v_{4}\}$, respectively. Hence every proper ideal of $L_{K}(E)$ is strongly
prime. The graph $E$ is finite and satisfies Condition (K), thus
illustrating Theorem \ref{sp}. }
\end{example}

The next theorem describes when a Leavitt path algebra is strongly
zero-dimensional, that is, when every prime ideal of $L$ is strongly prime.
In its proof, we shall be using the following concept of an extreme cycle.

\begin{definition}
\cite{AAS} A cycle $c$ in a graph $E$ is said to an extreme cycle, if it has
exits and for every path $\alpha$ with $s(\alpha)\in c^{0}$, there is a path 
$\beta$ with $s(\beta)=r(\alpha)$ such that $r(\beta)\in c^{0}$.
\end{definition}

\begin{theorem}
\label{psp}

\begin{enumerate}
\item Suppose $E$ is a finite graph (or more generally suppose $E^{0}$ is
finite). Then every prime ideal of $L_{K}(E)$ is strongly prime if and only
if the graph $E$ satisfies Condition (K);

\item Let $E$ be an arbitrary graph. Then every prime ideal of $L_{K}(E)$ is
strongly prime if and only if the graph $E$ satisfies Condition (K) and
every quotient graph $E\backslash(H,S)$ which is downward directed satisfies
the strong CSP.
\end{enumerate}
\end{theorem}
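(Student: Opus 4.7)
The plan is to deduce both parts from Theorem \ref{Strongly primes of LPAs} together with the standard fact (Proposition 2.9.9 of \cite{AAS}) that $E$ satisfies Condition (K) exactly when every ideal of $L_K(E)$ is graded. By Theorem \ref{Strongly primes of LPAs} a strongly prime ideal is automatically graded; combined with the vanishing of the prime/Baer radical in any Leavitt path algebra (so every ideal is an intersection of prime ideals), the hypothesis that every prime ideal is strongly prime forces every ideal to be an intersection of graded ideals, hence graded. This yields Condition (K) in both parts.

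For the forward direction of (2), given any admissible pair $(H,S)$ with $(E\setminus(H,S))^0$ downward directed, I would first argue that $|B_H\setminus S|\le 1$, since two primed sinks $u_1',u_2'$ would admit no common lower bound. The two remaining cases, $S=B_H$ and $S=B_H\setminus\{u\}$, coincide precisely with cases (1) and (2) of Theorem \ref{Prime ideals} (in the second case downward directedness is equivalent to $v\ge u$ for every $v\in E^0\setminus H$), so $I(H,S)$ is a graded prime ideal; being strongly prime by hypothesis, Theorem \ref{Strongly primes of LPAs} delivers the strong CSP for $E\setminus(H,S)$. For the backward direction of (2), Condition (K) forces every prime $P$ of $L$ to be graded, say $P=I(H,S)$. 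Theorem \ref{Prime ideals} (with case (3) excluded by gradedness) gives that $E\setminus(H,S)$ is downward directed, the hypothesis supplies the strong CSP, and Condition (K) upgrades to Condition (L); Theorem \ref{Strongly primes of LPAs} now declares $P$ strongly prime.

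Part (1) should reduce to (2) once I verify that finiteness of $E^0$ automatically produces the strong CSP for any downward directed quotient graph. The key observation is that a finite downward directed preorder $V$ admits an element $w$ lying below every vertex: pairs have common lower bounds and iterating finitely many times yields such a $w$. Any non-empty hereditary saturated subset of $V$ then contains $w$ by heredity, so $\{w\}$ witnesses the strong CSP. For the shape arising in case (2) of Theorem \ref{Prime ideals}, one may simply take $w=u'$, the adjoined primed sink. The main obstacle will be bookkeeping the two forms of graded prime ideals and confirming that $u'$ in $E\setminus(H,B_H\setminus\{u\})$ behaves correctly both when verifying downward directedness (each $v\in E^0\setminus H$ reaches $u'$ by replacing a last edge into $u$ with its primed copy) and when verifying the strong CSP hypothesis supplies a countable separating set contained in every non-empty hereditary saturated subset of the quotient vertex set.
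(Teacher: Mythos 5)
Your derivation of Condition (K) contains a genuine gap. You argue that since the prime/Baer radical of $L_K(E)$ vanishes, ``every ideal is an intersection of prime ideals,'' and hence every ideal is an intersection of graded (strongly prime) ideals and is therefore graded. But vanishing of the prime radical only says that $\{0\}$ is an intersection of prime ideals (i.e.\ $L$ is semiprime); it does not make every ideal semiprime. For instance, in $K[x,x^{-1}]$ (the Leavitt path algebra of a single loop) the ideal $\langle (x-1)^2\rangle$ is contained in exactly one prime, namely $\langle x-1\rangle$, so it is not an intersection of primes; the same phenomenon occurs in any $L_K(E)$ admitting a non-graded prime $P$, since $P^2\neq P$ yet every prime containing $P^2$ contains $P$. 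What you actually need is the implication ``every prime ideal of $L_K(E)$ is graded $\Rightarrow$ $E$ satisfies Condition (K),'' and this is not something you can get from semiprimeness of $L$; the paper obtains it by citing Corollary 3.13 of \cite{R-1}, and you should do the same (or reprove that corollary), after which Proposition 2.9.9 of \cite{AAS} gives that every ideal is graded.

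The remainder of your argument is sound and follows the same overall strategy as the paper: everything funnels through Theorem \ref{Strongly primes of LPAs}, with the forward direction of (2) requiring the observation (which you verify carefully, via $|B_H\setminus S|\le 1$) that a downward directed quotient graph corresponds to a graded prime ideal. Your treatment of part (1) is genuinely different from, and simpler than, the paper's: the paper invokes Lemma 3.7.10 of \cite{AAS} to classify where paths in a finite graph terminate (sink, cycle without exits, or extreme cycle) and then runs a case analysis with an explicit path construction around an extreme cycle, whereas you simply note that a finite downward directed preorder has an element $w$ with $u\ge w$ for all $u$, obtained by iterating pairwise lower bounds, and that $\{w\}$ then witnesses the strong CSP since every non-empty hereditary subset contains $w$. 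Your argument is correct (note that when $E^0$ is finite the quotient vertex set $(E^0\setminus H)\cup\{v':v\in B_H\setminus S\}$ is still finite) and is a worthwhile simplification; it also makes transparent why Condition (L) is the only content of Condition (K) actually used in part (1) beyond gradedness.
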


\begin{proof}
Suppose every prime ideal of $L_{K}(E)$ is strongly prime, then every prime
ideal of $L_{K}(E)$ is graded, since a strongly prime ideal always graded
(Theorem \ref{Strongly primes of LPAs}). This implies, by Corollary 3.13 of 
\cite{R-1}, that the graph $E$ satisfies Condition (K) which, by Proposition
2.9.9 of \cite{AAS}, is equivalent to every ideal of $L_{K}(E)$ being a
graded ideal.

(1). Assume now that $E^{0}$ is finite and that $E$ satisfies Condition (K).
Let $P$ be any prime ideal of $L_{K}(E)$ which, being graded, will be of the
form $P=I(H,S)$ where $H=P\cap E^{0}$. Since $(E\backslash(H,S))^{0}$ is
finite, Lemma 3.7.10 of \cite{AAS} implies that every path in $E\backslash
(H,S)$ ends at a sink, a cycle without exits or an extreme cycle. If there
is a sink in $(E\backslash(H,S))^{0}$, then by downward directness, there
can be only one sink, say $w$ in $(E\backslash(H,S))^{0}$and, moreover,
every non-empty hereditary subset of the downward directed set $(E\backslash
(H,S))^{0}$ will contain $w$ and hence $(E\backslash(H,S))^{0}$ will satisfy
the strong CSP with respect to $\{w\}$. If there are no sinks in $%
(E\backslash(H,S))^{0}$, then, since $E$ satisfies Condition (K), every
cycle will have exits and so every path in $E\backslash(H,S)$ will end at an
extreme cycle. Since $(E\backslash(H,S))^{0}$ is finite, there are only
finitely many extreme cycles, say $c_{1},\ldots,c_{n}$. Fix a vertex $v\in
c_{1}^{0}$. We claim that every vertex $u\in(E\backslash(H,S))^{0}$
satisfies $u\geq v$. To see this, note that, since every path ends at one of
the cycles $c_{1},\ldots, c_{n}$, $u\geq w\in c_{j}^{0}$ for some $j$. Since 
$(E\backslash(H,S))^{0}$ is downward directed, there is a vertex $v_{0}$
such that $v\geq v_{0}$ and $w\geq v_{0}$. Suppose $\alpha$ denotes the path
connecting $v$ to $v_{0}$ and $\beta$ denotes the path connecting $w$ to $%
v_{0}$. Since $c_{1}$ is an extreme cycle, there is a path $\gamma$ with $%
s(\gamma)=v_{0}$ and $r(\gamma)\in c_{1}^{0}$. Then the path $\beta\gamma$
can be elongated to a path connecting $w$ to $v$. Hence $w\geq v$ which
implies $u\geq v$. It is then clear that $(E\backslash(H,S))^{0}$ satisfies
the strong CSP with respect to $\{v\}$. Since Condition (K) always implies
Condition (L), in both cases $E\backslash(H,S)$ is downward directed
satisfying the strong CSP and Condition (L). Hence, by Theorem \ref{Strongly
primes of LPAs}, $P$ is strongly prime.

(2) Suppose $E$ is an arbitrary graph. If every prime ideal of $L_{K}(E)$ is
strongly prime, then as noted earlier, $E$ satisfies Condition (K) and so
every ideal is graded. So any prime ideal $P$ will be of the form $P=I(H,S)$
where $E\backslash(H,S)$ is downward directed and satisfies Condition (K)
and therefore Condition (L). Thus, in the context of Condition (K), Theorem %
\ref{Strongly primes of LPAs} implies that the condition that the downward
directed graph $E\backslash(H,S)$ satisfies strong CSP is equivalent to the
prime ideal $I(H,S)$ being strongly prime.
\end{proof}

\begin{example}
\textrm{Let $E$ be the following graph. }

\textrm{\bigskip }

\textrm{%
\begin{equation*}
\xymatrix{ \bullet^{u} \ar@(ul,ur) \ar@(dr,dl) \ar[r] & \bullet^{v} &
\bullet^{w} \ar[l] \ar@(ul,ur) \ar@(dr,dl)}
\end{equation*}
}

\textrm{\bigskip }

\textrm{\noindent The graph is finite and satisfies Condition (K). Hence
every ideal of $L_{K}(E)$ is graded. Now the proper hereditary saturated
subsets of $E^{0}$, are $H_{0}=\emptyset,H_{1}=\{v\},H_{2}=\{u,v\}$ and $%
H_{3}=\{v,w\}$. Hence $I_{i}=<H_{i}>$, $i=0,1,2,3$ are all the proper ideals
of $L_{K}(E)$. It is easy to check that the prime ideals of $L_{K}(E)$ are $%
\{0\}=I_{0}, I_{2}$ and $I_{3}$ as $E\backslash H_{i}$ is downward directed
for $i=0,2,3$. It is also easy to see that each of these $E\backslash H_{i}$
satisfies strong CSP. Hence the prime ideals $I_{i}$, $i=0,2,3$ are actually
strongly prime. So every prime ideal of $L_{K}(E)$ is strongly prime, thus
illustrating Theorem \ref{psp}. }
\end{example}

\section{Insulated Prime Leavitt path algebras and Insulated Prime Ideals}

\noindent For non-commutative rings, the concept of a left/right strongly
prime ring was introduced in \cite{HL} while dealing with Kaplansky's
conjecture on prime von Neumann regular rings. Following this, the
definition of a left/right strongly prime ideal was given in \cite{KW} which
is different from the one introduced in \cite{JOT}. To avoid confusion with
the concept of strongly prime rings and ideals that we investigated in
Section 4, we rename this concept in Definition \ref{HL Definition} below.

\begin{definition}
$($\cite{HL}$)$ Let $R$ be a ring. A right insulator of an element $a\in R$
is defined to be a finite subset $S(a)$ of $R$, such that the right
annihilator $ann_{R}\{ac:c\in S(a)\}=0$.
\end{definition}

Similarly, left insulator of an element can be defined.

\begin{definition}
$($\cite{HL}$)$\label{HL Definition} A ring $R$ is called a right insulated
prime ring if every non-zero element of $R$ has a right insulator.
Equivalently, every non-zero two-sided ideal of such a ring $R$ contains a
finite non-empty subset $S$ whose right annihilator is zero. This finite set 
$S$ is called an insulator of $I$.
\end{definition}

A\textit{\ left insulated prime ring} is defined similarly.

\bigskip

\noindent Following \cite{HL}, Kau\u{c}ikas and Wisbauer \cite{KW} define
the concept of a right/left strongly prime ideals. Again to avoid confusion
with the concept strongly prime ideals of Section 4, we rename these ideals
as indicated below.

\begin{definition}
An ideal $I$ of a ring $R$ is called a right/left insulated prime ideal, if $%
R/I$ is a right/left insulated prime ring.
\end{definition}

In this section, we first describe when a Leavitt path algebra $L_{K}(E)$ is
a left/right insulated prime ring. Interestingly, the distinction between
the left and right insulated primeness vanishes for Leavitt path algebras.
So we may just state $L_{K}(E)$ as being an insulated prime ring. We show
(Theorem \ref{Type I LPA}) that a Leavitt path algebra $L_{K}(E)$ is an
insulated prime ring exactly when $L_{K}(E)$ is a simple ring or $L_{K}(E)$
is isomorphic to the matrix ring $M_{n}(K[x,x^{-1}])$ some integer $n\geq1$.
Equivalent graphical conditions on $E$ are also given. Next, we characterize
the insulated prime ideals $P$ of $L_{K}(E)$. Non-graded insulated prime
ideals of $L_{K}(E)$ are precisely the (non-graded) maximal ideals of $%
L_{K}(E)$. A graded ideal $P$ with $P\cap E^{0}=H$ will be an insulated
prime ideal of $L_{K}(E)$ if and only if $P=I(H,B_{H})$ and $P$ is either a
maximal graded ideal of $L_{K}(E)$ or is a maximal ideal of $L_{K}(E)$
(which is graded). It is then clear that an insulated prime ideal of $%
L_{K}(E)$ is always a prime ideal. Graphically, if $gr(P)=I(H,B_{H})$, then $%
E\backslash(H,B_{H})$ contains only finitely many vertices, is downward
directed and has no non-empty proper hereditary saturated subset of
vertices. Examples are constructed, showing that the concepts of strongly
prime ideals and insulated prime ideals are independent in the case of
Leavitt path algebras.

We first prove the following useful proposition which states that the matrix
rings $M_{\Lambda}(K[x,x^{-1}])$ are precisely the Leavitt path algebras
which are graded-simple but not simple.

\begin{proposition}
\label{graded-simple} The following properties are equivalent for a Leavitt
path algebra $L=L_{K}(E)$:

\begin{enumerate}
\item $L_{K}(E)$ is graded-simple but not simple, that is, $L_{K}(E)$
contains non-zero proper ideals but does not contain any non-zero proper
graded ideals;

\item $E$ is row-finite, downward directed and contains a cycle $c$ without
exits based at a vertex $v$;

\item $L_{K}(E)\cong_{gr}M_{\Lambda}(K[x,x^{-1}])$ under the matrix grading
of $M_{\Lambda}(K[x,x^{-1}])$, where $\Lambda$ is some index set.
\end{enumerate}
\end{proposition}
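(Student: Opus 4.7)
The plan is to close the triangle $(1) \Rightarrow (2) \Rightarrow (3) \Rightarrow (1)$, invoking the structural results from Section 2 at each stage.

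For $(1) \Rightarrow (2)$, since $L$ is graded-simple but not simple, there must exist a proper non-zero non-graded ideal $I$. Applying Theorem \ref{genators of ideal}, $I = I(H,S) + \sum_{t\in T} \langle f_t(c_t)\rangle$ with $T$ non-empty and each $c_t$ a cycle without exits in $E \setminus (H,S)$. Graded-simplicity forces $gr(I) = I(H,S) = \{0\}$, so $(H,S) = (\emptyset,\emptyset)$ and $E$ itself contains a cycle $c$ without exits based at some vertex $v$. To establish downward-directedness, I would verify that $H^{\ast} := \{u \in E^0 : u \not\geq v\}$ is hereditary and saturated: hereditariness is immediate, and saturation uses the fact that a regular $w$ with $r(s^{-1}(w)) \subseteq H^{\ast}$ cannot be the start of a path to $v$ (the first edge would land in $H^{\ast}$ yet reach $v$, a contradiction). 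Graded-simplicity then forces $H^{\ast} = \emptyset$ because $v \notin H^{\ast}$. For row-finiteness, the hereditary saturated closure of the non-empty set $\{c^0\}$ is itself hereditary saturated and hence equals $E^0$. Since the saturation operation only adjoins regular vertices, while the vertices of $c$ are regular (each emitting exactly the single cycle edge), any infinite emitter of $E$ could never enter this closure, contradicting that the closure is all of $E^0$.

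For $(2) \Rightarrow (3)$, Proposition \ref{M=<c^0>}(i) directly yields the graded isomorphism $M := \langle c^0 \rangle \cong_{gr} M_\Lambda(K[x,x^{-1}])$ for a suitable index set $\Lambda$. The remaining task is to prove $M = L$, which reduces to showing that the hereditary saturated closure of $\{c^0\}$ in $E^0$ equals $E^0$. Using row-finiteness (so that no breaking-vertex complications arise and $\langle H \rangle \cap E^0$ coincides with the saturated closure of $H$) together with downward-directedness (providing a path from any $u \in E^0$ to $v$), I would argue inductively along the in-paths into $\{c^0\}$ that each vertex satisfies the saturation condition at some stage, so the closure grows to all of $E^0$.

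For $(3) \Rightarrow (1)$, Proposition \ref{M=<c^0>}(iii) supplies a lattice isomorphism between the ideals of $M_\Lambda(K[x,x^{-1}])$ and those of $K[x,x^{-1}]$, and this isomorphism respects the grading inherited from the matrix grading. In $K[x,x^{-1}]$, every non-zero homogeneous element has the form $ax^n$ and is therefore a unit, so the only graded ideals are $\{0\}$ and the whole ring. Transporting this across the lattice isomorphism, $M_\Lambda(K[x,x^{-1}])$ has only the trivial graded ideals and is therefore graded-simple. It fails to be simple because $K[x,x^{-1}]$ admits non-zero proper (non-graded) ideals, e.g.\ $\langle x-1 \rangle$, which correspond to non-zero proper non-graded ideals of the matrix ring.

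The main obstacle I anticipate is the step $(2) \Rightarrow (3)$, specifically verifying that $\langle c^0 \rangle = L$. The saturation procedure starting from $\{c^0\}$ must absorb every vertex of $E$, and while row-finiteness rules out the infinite-emitter pathology and downward-directedness provides paths into $v$, the inductive propagation upward through the graph must be carried out carefully to guarantee that no vertex is left behind by the saturation. The rest of the argument is a direct application of the preparatory results in Section 2 and Section 3.
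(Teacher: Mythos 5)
Your cycle of implications matches the paper's, and your $(1)\Rightarrow(2)$ and $(3)\Rightarrow(1)$ are sound; indeed your $(1)\Rightarrow(2)$ is more self-contained than the paper's, which routes through an ideal maximal with respect to excluding a vertex and the classification of non-graded prime ideals in Theorem \ref{Prime ideals}, whereas you work directly from Theorem \ref{genators of ideal} and the hereditary saturated set $H^{\ast}=\{u:u\not\geq v\}$. The genuine gap is exactly where you anticipated it: in $(2)\Rightarrow(3)$, the assertion that the hereditary saturated closure of $\{c^{0}\}$ equals $E^{0}$ does \emph{not} follow from the three hypotheses of (2), and no inductive propagation along in-paths can establish it, because it is false. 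Take $E^{0}=\{u,v\}$ with a loop $g$ at $v$, an edge $f:u\rightarrow v$, and a loop $e$ at $u$. This graph is row-finite, downward directed (every vertex connects to $v$), and $g$ is a cycle without exits; yet $\{v\}$ is a non-empty proper hereditary saturated subset, since saturation never absorbs $u$ (because $u$ emits the edge $e$ back to itself, $r(s^{-1}(u))=\{u,v\}\nsubseteq\{v\}$). Hence $L_{K}(E)$ has the non-trivial proper graded ideal $I(\{v\})$ and is neither graded-simple nor isomorphic to $M_{\Lambda}(K[x,x^{-1}])$. Your induction stalls at $u$ for precisely this reason: a vertex lying on a cycle (or on an infinite path) that never enters $c$ is never added by the saturation procedure.

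You should not feel singled out: the paper's own proof of $(2)\Rightarrow(3)$ hides the same gap behind the sentence ``by downward directness, every path in $E$ ends at the vertex $v$,'' which is the comet condition needed for Theorem 4.2.12 of \cite{AAS} and is strictly stronger than downward directedness (the infinite path $eee\cdots$ above never reaches $v$). Note that your own proof of $(1)\Rightarrow(2)$, like the paper's, actually establishes the stronger fact $<\{c^{0}\}>=L$, i.e.\ that the saturated closure of $\{c^{0}\}$ is all of $E^{0}$; but this is not recorded in statement (2), so it is not available as a hypothesis when proving $(2)\Rightarrow(3)$. With that clause added to (2) (equivalently, with (2) replaced by ``$E$ is a comet with no-exit cycle $c$''), both your argument and the paper's go through, since then $L=M\cong_{gr}M_{\Lambda}(K[x,x^{-1}])$ by Proposition \ref{M=<c^0>}(i). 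As stated, however, $(2)\Rightarrow(3)$ admits a counterexample, and the step you flagged as the main obstacle is not a technicality to be carried out carefully but the precise point at which the statement itself needs repair.
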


\begin{proof}
Assume (1). Let $I$ be a proper non-zero ideal of $L=L_{K}(E)$. Then there
is a vertex $u\notin I$. If $P$ is an ideal maximal with respect to $u\notin
P$, then $P$ is a prime ideal (because, if $a\notin P,b\notin P$, then $u\in
LaL+P$ and $u\in LbL+P$ and then $u=u^{2}\in(LaL+P)(LbL+P)=LaLbL+P$. Since $%
u\notin P$, this implies that $aLb\nsubseteqq P$). Since $L_{K}(E)$ is
graded-simple, $P$ must be non-graded and is of the form $%
P=I(H,B_{H})+<p(c)> $ where $c$ is a cycle without exits in $%
E\backslash(H,B_{H})$ based at a vertex $v$, $E^{0}\backslash
H=E\backslash(H,B_{H})^{0}$ is downward directed and $p(x)$ is an
irreducible polynomial in $K[x,x^{-1}]$ (Theorem 2.2). By hypothesis, the
graded ideal $I(H,B_{H})=\{0\}$ and\ so $H=\emptyset $. Consequently, $E^{0}$
is downward directed and $c$ is the only cycle without exits in $E$. We
claim that $E$ is row-finite. Suppose, on the contrary, there is a vertex $w$
which is an infinite emitter. Since $w$ is not in the hereditary set $c^{0}$%
, it follows from the definition of its saturated closure, that $w$ is not
in the saturated closure of $c^{0}$. Hence $w\notin<c^{0}>$. This is a
contradiction, since the non-zero graded ideal $<c^{0}>=L_{K}(E)$, by
hypothesis. We thus conclude that $E$ must be row-finite. This proves (2).

Assume (2). Now, by downward directness, every path in $E$ ends at the
vertex $v$. Then, by Theorem 4.2.12 in \cite{AAS}, $L_{K}(E)=<c^{0}>\cong
M_{\Lambda}(K[x,x^{-1}])$ where $\Lambda$ \ denotes the set of all paths in $%
E$ that end at $v$, but do not go through the entire cycle $c$. It is shown
in (the paragraph \textquotedblleft grading of matrix rings" in Section 2, 
\cite{HR}) that this isomorphism is a graded isomorphism under the
matrix-grading of $M_{\Lambda}(K[x,x^{-1}])$. Hence (3) follows.

Now (3) $\implies$ (1) follows from the fact that $M_{\Lambda}(K[x,x^{-1}])$
is a graded direct sum of copies of $K[x,x^{-1}]$ and that $K[x,x^{-1}]$ has
no non-zero proper graded ideals under its natural $\mathbb{Z}$-grading.
\end{proof}

The next corollary points out a property of non-graded maximal ideals in a
Leavitt path algebra that will be used later.

\begin{corollary}
\label{I maximal => gr(I) maximal graded} If $I$ is a non-graded maximal
ideal of $L=L_{K}(E)$, then $gr(I)$ is a maximal graded ideal of $L$ and $%
L/gr(I)\cong_{gr}M_{\Lambda}(K[x,x^{-1}])$ for some index set $\Lambda$.
\end{corollary}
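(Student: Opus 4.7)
The plan is to leverage the characterization of non-graded prime ideals from Theorem \ref{Prime ideals} together with the graded-simple characterization established in Proposition \ref{graded-simple}. Since a maximal ideal is prime, $I$ being non-graded maximal implies by Theorem \ref{Prime ideals} that $I = I(H,B_H) + \langle p(c) \rangle$, where $H = I \cap E^0$, $E\setminus(H,B_H)$ is downward directed, $c$ is a cycle without exits in $E\setminus(H,B_H)$ based at some vertex $v$, and $p(x)$ is irreducible in $K[x,x^{-1}]$. Consequently $gr(I) = I(H,B_H)$. Passing to $\bar L := L/gr(I) \cong L_K(E\setminus(H,B_H))$, the image $\bar I = I/gr(I) = \langle p(c)\rangle$ is a non-graded maximal ideal of $\bar L$.

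The heart of the argument is to show that $\bar L$ is graded-simple; once this is done, both conclusions follow immediately, the first because maximal graded ideals of $L$ correspond to graded-simple quotients, and the second by the equivalence (1) $\Leftrightarrow$ (3) of Proposition \ref{graded-simple}. To establish graded-simplicity, I would take an arbitrary nonzero graded ideal $J$ of $\bar L$ and argue $J = \bar L$ in two steps. First, write $J = I(H',S')$ for an admissible pair in the quotient graph. If $H' = \emptyset$, then $S' \subseteq B_\emptyset$; but $B_\emptyset$ is always empty (an infinite emitter $w$ would need $|s^{-1}(w)| < \infty$, a contradiction), so $J = 0$, ruling this case out. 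Hence $H'$ contains some vertex $u$; using the downward-directedness of $E\setminus(H,B_H)$ together with the no-exits property of $c$ (which forces $u \geq v$ for every vertex $u$, exactly as in the proof of Lemma \ref{Ideal in downward dir graph with NE cycle}), one concludes $v \in H'$ and hence $M := \langle\{c^0\}\rangle \subseteq J$.

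The second step is to combine $M \subseteq J$ with the maximality of $\bar I$. Observe that $\bar I = \langle p(c)\rangle \subseteq M$, because $p(c) \in vL_K(E\setminus(H,B_H))v \subseteq M$. Also, $\bar I$ contains no vertex of the quotient graph: otherwise the previous paragraph's argument would force $M \subseteq \bar I$, so $\bar I = M$ would be graded, contradicting its non-gradedness. Hence $\bar I \cap \bar L \not\supseteq J$ (since $J$ contains vertices but $\bar I$ does not), so $J \not\subseteq \bar I$, and maximality of $\bar I$ yields $J + \bar I = \bar L$. But $\bar I \subseteq M \subseteq J$, giving $J + \bar I = J$, whence $J = \bar L$.

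This establishes that $\bar L$ has no nonzero proper graded ideals, so $gr(I)$ is a maximal graded ideal of $L$. Since $\bar I$ witnesses that $\bar L$ is not simple, Proposition \ref{graded-simple} applies and furnishes a graded isomorphism $L/gr(I) = \bar L \cong_{gr} M_\Lambda(K[x,x^{-1}])$ for some index set $\Lambda$. The main technical obstacle is really the case analysis in verifying that a nonzero graded ideal of $\bar L$ must contain $M$; the rest is bookkeeping with the structure theorems already recorded in the paper.
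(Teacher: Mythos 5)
Your proposal is correct and follows essentially the same route as the paper: both arguments show that any graded ideal strictly containing $gr(I)=I(H,B_H)$ must pick up a vertex $u$ with $u\geq v$, hence contain $c$ and therefore $I$, so maximality of $I$ forces it to be all of $L$, after which Proposition \ref{graded-simple} gives the matrix-ring description. Your extra care with the $H'=\emptyset$ case (via $B_{\emptyset}=\emptyset$) and with why $J\nsubseteq\bar I$ just makes explicit what the paper leaves to the order-correspondence between graded ideals and admissible pairs.
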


\begin{proof}
Now $I$ is, in particular, a (non-graded) prime ideal and so $%
I=I(H,B_{H})+<p(c)>$, where $(E\backslash(H,B_{H}))^{0}=E^{0}\backslash H$
is downward directed, $c$ is a cycle without exits in $E\backslash(H,B_{H})$
based at a vertex $v$ and $p(x)$ is an irreducible polynomial in $%
K[x,x^{-1}] $. If $A=I(H^{\prime}, S^{\prime})$ is a graded ideal such that $%
A\supsetneqq I(H,B_{H})$ then $H^{\prime}\supsetneqq H$. If $u\in
H^{\prime}\backslash H$, then by downward directness, $u\geq v$ and this
implies that ($v$ and hence) $c\in A$ and so $I\subsetneqq A$. By the
maximality of $I$, $A=L$. Thus $gr(I)=I(H,B_{H})$ is a maximal graded ideal
of $L$. Then, by Proposition \ref{graded-simple}, $L/gr(I)\cong_{gr}M_{%
\Lambda}(K[x,x^{-1}])$ for some index set $\Lambda$.
\end{proof}

\begin{theorem}
\label{Type I LPA} The following properties are equivalent for a Leavitt
path algebra $L=L_{K}(E)$ of an arbitrary graph $E$:

\begin{enumerate}
\item $L$ is a left/right insulated prime ring;

\item Either (a) $L$ is a simple ring or (b) $L\cong_{gr}M_{n}(K[x,x^{-1}])$
where $n$ is some positive integer;

\item Either (a) $E$ satisfies Condition (L) and has no non-empty proper
hereditary saturated subsets of vertices or (b) $E$ is a finite ``comet",
that is, a downward directed finite graph containing a cycle without exits.
\end{enumerate}
\end{theorem}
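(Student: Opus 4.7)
The plan is to prove the cycle $(2) \Leftrightarrow (3)$, $(2) \Rightarrow (1)$, and $(1) \Rightarrow (2)$, with the bulk of the work in the last implication.

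The equivalence of $(2)$ and $(3)$ is essentially a dictionary translation via known results. Part (a) is the classical characterization of simple Leavitt path algebras, namely that $L_{K}(E)$ is simple iff $E$ satisfies Condition (L) and admits no non-empty proper hereditary saturated subset. Part (b) follows from Proposition \ref{graded-simple}: the matrix presentation $L \cong_{gr} M_{\Lambda}(K[x,x^{-1}])$ arises precisely from a row-finite downward-directed graph with a cycle without exits, and the index set $\Lambda$ (of paths ending at the cycle's base without traversing it entirely) is finite exactly when $E^{0}$ is finite, yielding $M_{n}(K[x,x^{-1}])$ for some $n \geq 1$.

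For $(2) \Rightarrow (1)$, I would argue by direct construction of insulators. In case (b), for nonzero $a \in M_{n}(K[x,x^{-1}])$ with a nonzero entry $a_{ij} = p(x)$, the finite set $S(a) = \{e_{jk} : 1 \leq k \leq n\}$ works, since $\{a e_{jk}\} = \{p(x)\, e_{ik}\}$, and any $r$ annihilated by all of these must have every row annihilated by the non-zero-divisor $p(x)$, hence $r = 0$ as $K[x,x^{-1}]$ is a domain. In case (a), I would exploit simplicity to write $\sum_{i=1}^{m} x_{i} a y_{i} = e$ for a local unit $e$ of $a$ and take $S(a) = \{y_{1},\ldots,y_{m}\}$; any $r$ annihilated on the right by each $a y_{i}$ satisfies $e r = 0$, and combined with a local-unit argument this forces $r = 0$.

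For $(1) \Rightarrow (2)$, the main argument proceeds as follows: $L$ insulated prime immediately implies $L$ is prime, so $E^{0}$ is downward directed by Theorem \ref{Prime ideals}. Assuming $L$ is not simple, I would first show $L$ is graded-simple (no proper non-zero graded ideals), then invoke Proposition \ref{graded-simple} to obtain $L \cong_{gr} M_{\Lambda}(K[x,x^{-1}])$, and finally force $|\Lambda| < \infty$ via the insulator property: if $\Lambda$ were infinite, the matrix unit $a = e_{\lambda_{0}\lambda_{0}}$ would admit no finite insulator, because any finite family $\{a c : c \in S(a)\}$ is supported in row $\lambda_{0}$ with column-supports contained in a finite set $J \subset \Lambda$, making any $e_{\mu\mu}$ with $\mu \in \Lambda \setminus J$ a nonzero right annihilator.

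The main obstacle is the graded-simplicity step in $(1) \Rightarrow (2)$. Ruling out a proper graded ideal $I(H,B_{H})$ requires producing an explicit element --- most naturally a vertex $v \in E^{0} \setminus H$ --- whose right products $\{v c : c \in S(v)\}$ fail to simultaneously collapse a large right annihilator coming from the graded piece; the precise mechanism must account both for the hereditary/saturated structure of $H$ and for the potential matrix-like substructure inside $I(H,B_{H})$ revealed by Proposition \ref{M=<c^0>}.
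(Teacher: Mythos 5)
Your outline of $(2)\Leftrightarrow(3)$, the matrix-case insulator in $(2)\Rightarrow(1)$, and the final step of $(1)\Rightarrow(2)$ (forcing $|\Lambda|<\infty$ via $e_{\lambda_{0}\lambda_{0}}$) are all fine and essentially agree with the paper. But there are two genuine gaps. In case (a) of $(2)\Rightarrow(1)$ the argument breaks at ``$er=0$, and combined with a local-unit argument this forces $r=0$'': the idempotent $e$ is a local unit for $a$, not for $r$, so $er=0$ gives no information about $r$. Worse, the implication you are trying to prove is false for non-unital simple Leavitt path algebras. Take $M_{\infty}(K)$ (finitary infinite matrices), the Leavitt path algebra of $\cdots\rightarrow v_{3}\rightarrow v_{2}\rightarrow v_{1}$, which is simple: for $a=e_{11}$ and any finite $S(a)$, the products $ac$ have column support in a common finite set $J$, so $e_{\mu\mu}$ with $\mu\notin J$ is a nonzero right annihilator --- exactly the mechanism you yourself deploy to force $|\Lambda|<\infty$. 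So $(2)(\mathrm{a})\Rightarrow(1)$ requires $L$ to be unital, i.e.\ $E^{0}$ finite; this is consistent with the fact that insulated primeness forces $E^{0}$ finite (see below), but it means condition (2)(a)/(3)(a) as literally stated needs a finiteness caveat. (The paper's own one-line ``a simple ring is trivially insulated prime'' silently assumes an identity; with $1\in L$ your $\sum x_{i}ay_{i}=1$ argument is correct.)

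The second gap is the one you flag yourself: graded-simplicity in $(1)\Rightarrow(2)$. Your proposed mechanism (finding a vertex $v\notin H$ with no insulator) is not how the paper proceeds and is hard to run, since such a vertex may well have an insulator. The paper's argument applies the ideal form of insulated primeness to the nonzero graded ideal $I$ itself: there is a finite $S\subseteq I$ with zero right annihilator; since a graded ideal of $L$ is itself isomorphic to a Leavitt path algebra it has local units, so $S\subseteq\varepsilon I\varepsilon$ for an idempotent $\varepsilon=\sum_{j}\alpha_{j}\beta_{j}^{\ast}\in I$. If $E^{0}$ were infinite, any vertex $v$ not among the finitely many vertices occurring in $\varepsilon$ satisfies $Sv=S\varepsilon v=0$, contradicting $\mathrm{ann}(S)=0$; hence $E^{0}$ is finite and $L$ has identity $1=\sum_{v\in E^{0}}v$. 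Then $S(1-\varepsilon)=0$ forces $1-\varepsilon=0$, so $1=\varepsilon\in I$ and $I=L$. This is the missing idea; note that it also yields $\Lambda$ finite for free once Proposition \ref{graded-simple} is invoked (a unital $M_{\Lambda}(K[x,x^{-1}])$ has $\Lambda$ finite), making your separate $e_{\lambda_{0}\lambda_{0}}$ argument, though correct, unnecessary.
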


\begin{proof}
Assume (1) and that $L$ is a right insulator ring. If $L$ is a simple ring,
we have nothing to prove. Assume $L$ is not a simple ring. We claim that $L$
is graded-simple. Suppose, on the contrary, there is a non-zero graded ideal 
$I$ in $L$. By hypothesis, $I$ has a right insulator $S$. Now, by Theorem
2.5.22 in \cite{AAS}, the graded ideal $I$ is isomorphic to a Leavitt path
algebra and hence has local units. This means that corresponding to the
finite subset $S$, there is an idempotent $\varepsilon$ (depending on $S$)
in $I$ such that $S\subseteq\varepsilon I\varepsilon$. Let $\varepsilon={%
\displaystyle\sum \limits_{j=1}^{n}}\alpha_{j}\beta_{j}^{\ast}\in I$ \ and
let $X=\{s(\alpha _{j}),r(\alpha_{j})=r(\beta_{j}),s(\beta_{j}):j=1,...,n\}$%
. Now $E^{0}$ cannot be an infinite set, because, otherwise, we can find a
vertex $v\in E^{0}\backslash X$ and as $\varepsilon v=0$, $L\varepsilon\cdot
vL=0$ which implies that $S\cdot vL=0$, a contradiction\ to the hypothesis.
Thus $E^{0}$ is finite. This means that $L$ has a multiplicative identity $1=%
{\displaystyle\sum\limits_{v\in E^{0}}}v$. Moreover, $S\subseteq\varepsilon
I\varepsilon$ implies that $(1-\varepsilon)S=0=S(1-\varepsilon)$. Since $S$
is the insulator for $I$, $1-\varepsilon=0$ or $1=\varepsilon\in I$. Hence $%
I=L$, thus proving that $L$ is graded-simple. We then appeal to Proposition %
\ref{graded-simple} to conclude that $L\cong_{gr}M_{\Lambda}(K[x,x^{-1}])$.
Since $L$ has a multiplicative identity $1$, the index set $\Lambda$ must be
finite and we conclude that $L\cong_{gr}M_{n}(K[x,x^{-1}])$\ for some
positive integer $n$. This proves (2).

Assume (2). If $L$ is a simple ring, it is trivially insulated prime.
Suppose $L\cong_{gr}M_{n}(K[x,x^{-1}])$ for some positive integer $n$. Now $%
K[x,x^{-1}]$, being an integral domain, is clearly an insulated prime ring.
We show (following the ideas in the proof of Proposition II.1, \cite{HL}),
that $M_{n}(K[x,x^{-1}])$ is also insulated prime. Suppose $0\neq A\in
M_{n}(K[x,x^{-1}])$ \ with a non-zero entry, say $a_{ij}\neq0$. For each $%
i=1,...,n$ and $j=1,...,n$, let $E_{ij}\in M_{n}(K[x,x^{-1}])$ be the matrix
unit having 1 at the $(i,j)$-entry and $0$ everywhere else. Let $0\neq b\in
K[x,x^{-1}]$. Then $\{E_{ij}b:i=1,...,n;j=1,...,n\}$ is an insulator for $A$%
. Because, if $N=(n_{ij})$ is any non-zero matrix with a non-zero entry $%
n_{kl}$, then $AE_{jk}bN\neq0$ since its $(i,l)$-entry is $%
a_{ij}bn_{kl}\neq0 $. This proves (1).

Now (2) $\Longleftrightarrow$ (3) by Theorem 2.9.1 and Theorem 4.2.12 in 
\cite{AAS}.
\end{proof}

In the next theorem we describe the insulated prime ideals of a Leavitt path
algebra.

\begin{theorem}
\label{Type I} The following properties are equivalent for an ideal $P$ of a
Leavitt path algebra $L=L_{K}(E)$ with $P\cap E^{0}=H$:

\begin{enumerate}
\item $P$ is an insulated prime ideal of $L_{K}(E)$;

\item Either $P$ is a maximal ideal or $P$ is not a maximal ideal but a
maximal graded ideal such that $L/P\cong M_{n}(K[x,x^{-1}])$ where $n$ is a
positive integer and, in particular, $E^{0}\setminus P$ is a finite set.
\end{enumerate}
\end{theorem}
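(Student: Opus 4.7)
The plan is to translate insulated primeness of $P$ into insulated primeness of the quotient ring $L/P$, and then exploit Theorem \ref{Type I LPA} together with the structural description of quotients of Leavitt path algebras. Since an insulated prime ring is in particular prime, $P$ is automatically a prime ideal and therefore falls into one of the cases of Theorem \ref{Prime ideals}, either graded ($P = I(H,S)$ with $E\setminus(H,S)$ downward directed) or non-graded ($P = I(H,B_H) + \langle p(c)\rangle$).

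The crucial preliminary step will be to show that insulated primeness of $P$ forces $E^0 \setminus H$ to be a finite set. For this I would mimic the local-units argument from the proof of Theorem \ref{Type I LPA}: $\overline{L} := L/P$ inherits local units from $L$, and as a nonzero two-sided ideal of itself it contains a finite right insulator $\overline{S}$. A local unit for $\overline{S}$ may be taken as the image of a finite vertex sum $\varepsilon = \sum_{v \in X} v$, so that for any $v \in E^0 \setminus X$ the orthogonality $\varepsilon v = 0$ forces $\overline{s}\,\overline{v} = \overline{\varepsilon s \varepsilon}\,\overline{v} = 0$ for every $\overline{s} \in \overline{S}$. By the insulator property $\overline{v} = 0$, i.e. $v \in H$, so $E^0 \setminus H \subseteq X$ is finite.

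For $(1)\Rightarrow (2)$ I would split on whether $P$ is graded. In the graded case $L/P \cong L_K(E\setminus(H,S))$ is itself a Leavitt path algebra, so Theorem \ref{Type I LPA} forces it either to be simple (giving $P$ maximal in $L$) or to satisfy $L/P \cong_{gr} M_n(K[x,x^{-1}])$ for some positive integer $n$; in the second case Proposition \ref{graded-simple} makes $P$ a maximal graded ideal that is not maximal, and $E^0 \setminus H$ is automatically finite because $M_n(K[x,x^{-1}])$ is unital. In the non-graded case the finiteness of $E^0 \setminus H$ established above, combined with downward direction and the cycle $c$ without exits in $E\setminus(H,B_H)$, lets me invoke Proposition \ref{graded-simple} to identify $L/I(H,B_H) \cong M_n(K[x,x^{-1}])$; the lattice isomorphism of Proposition \ref{M=<c^0>}(iii) then sends $\langle p(c)\rangle$ to $M_n(\langle p(x)\rangle)$, so that $L/P \cong M_n(K[x,x^{-1}]/\langle p(x)\rangle) \cong M_n(F)$ for the field $F = K[x,x^{-1}]/\langle p(x)\rangle$. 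Since $M_n(F)$ is simple, $P$ is maximal.

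The reverse implication $(2)\Rightarrow (1)$ just checks that the appropriate quotient is insulated prime. In the maximal graded subcase $L/P$ is a simple Leavitt path algebra and Theorem \ref{Type I LPA} applies directly; in the maximal non-graded subcase Corollary \ref{I maximal => gr(I) maximal graded} together with the analysis above produces $L/P \cong M_n(F)$, a simple unital ring, and any simple unital ring is trivially insulated prime (given $0 \neq a$ the expression $1 = \sum_i r_i a s_i$ furnishes the finite insulator $\{s_i\}$); the remaining alternative $L/P \cong M_n(K[x,x^{-1}])$ is covered by Theorem \ref{Type I LPA} itself. The main obstacle, and really the only nontrivial content of the argument, is the preliminary finiteness step, since once $E^0 \setminus H$ is known to be finite everything collapses cleanly onto Proposition \ref{graded-simple} and Theorem \ref{Type I LPA}.
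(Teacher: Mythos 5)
Your overall strategy --- reduce everything to Theorem \ref{Type I LPA} and Proposition \ref{graded-simple} after first forcing $E^{0}\setminus H$ to be finite by a local-units/insulator argument --- is essentially the paper's, and that preliminary finiteness step is correct (it is the same computation the paper carries out inside the proof of Theorem \ref{Type I LPA}). But there is a genuine gap in the non-graded case of $(1)\Rightarrow(2)$: to invoke Proposition \ref{graded-simple} you need its condition (2), which requires $E\backslash(H,B_{H})$ to be \emph{row-finite}, and finiteness of the vertex set together with downward directedness and a no-exit cycle does not give this. A graph with two vertices $w,v$, infinitely many edges from $w$ to $v$ and a loop $c$ at $v$ satisfies all three of your conditions, yet $\langle\{c^{0}\}\rangle$ is a proper graded ideal (an infinite emitter is never added by saturation), so that Leavitt path algebra is not graded-simple and is not a matrix ring over $K[x,x^{-1}]$. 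You must exclude infinite emitters in the quotient graph by a further appeal to insulated primeness --- for instance, the ideal $M=\langle\{c^{0}\}\rangle$ of $L/I(H,B_{H})$ is isomorphic to $M_{\Lambda}(K[x,x^{-1}])$ and induces in $L/P$ a non-zero ideal isomorphic to $M_{\Lambda}(K[x,x^{-1}]/\langle p\rangle)$, which has no finite insulator when $\Lambda$ is infinite because any finite subset is right-annihilated by a diagonal matrix unit $e_{\mu\mu}$ with $\mu$ outside its (finite) column support. The paper instead forces $\overline{M}=\overline{L}$ by comparing idempotent ideals: $(\overline{M})^{2}=\overline{M}$ while $I^{2}\neq I$ for proper non-zero ideals of $K[x,x^{-1}]$, transported through the lattice isomorphism of Proposition \ref{M=<c^0>}.

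The second problem is in $(2)\Rightarrow(1)$ for a non-graded maximal ideal $P$. Corollary \ref{I maximal => gr(I) maximal graded} only yields $L/gr(P)\cong M_{\Lambda}(K[x,x^{-1}])$ for \emph{some} index set $\Lambda$, and ``the analysis above'' that you invoke to shrink $\Lambda$ to a finite $n$ was carried out under the insulated-prime hypothesis, which is precisely what you are trying to establish in this direction; so the assertion $L/P\cong M_{n}(F)$ with $n$ finite is unsupported. This is not cosmetic: if $\Lambda$ is infinite then $M_{\Lambda}(F)$ is simple but non-unital, and your own remark that simplicity produces an insulator only through the identity element shows where the argument breaks --- in fact $M_{\Lambda}(F)$ with $\Lambda$ infinite admits no finite insulators at all, by the same diagonal-matrix-unit argument as above. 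A no-exit cycle fed by an infinite descending chain of vertices produces exactly such a non-graded maximal ideal with $gr(P)=0$ and $\Lambda$ countably infinite. The paper's own proof passes over this point by asserting that the simple ring $L/P$ is insulated prime, which is only automatic for unital rings, so this case needs an additional argument (or an additional finiteness hypothesis) in both treatments; but as written your justification for it is circular.
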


\begin{proof}
Assume (1). Suppose $P$ is an insulated prime ideal so that $L_{K}(E)/P$ is
an insulated prime ring. So, by Theorem \ref{Type I LPA}, $L_{K}(E)/P$ is a
simple ring or $L_{K}(E)/P\cong$ $M_{n}(K[x,x^{-1}])$.

If $L_{K}(E)/P$ is a simple ring, then clearly, $P$ is a maximal ideal of $%
L_{K}(E)$.

Suppose $L_{K}(E)/P\cong$ $M_{n}(K[x,x^{-1}])$ for some $n\geq1$. As $%
M_{n}(K[x,x^{-1}])$ is a prime ring, $P$ is clearly a prime ideal. We claim
that $P$ must be a graded ideal. Assume to the contrary that $P$ is a
non-graded ideal. By Theorem \ref{Prime ideals}, we can then write $%
P=I(H,B_{H})+<p(c)>$, where $H=P\cap E^{0}$, $E^{0}\backslash H=(E\backslash
(H,B_{H}))^{0}$ is downward directed, $c$ is a cycle without exits in $%
E\backslash(H,B_{H})$ based at a vertex $v$ and $p(x)\in K[x,x^{-1}]$ is an
irreducible polynomial. Then, in $\overline{L_{K}(E)}=L_{K}(E)/I(H,B_{H})$, $%
\overline{P}=P/I(H,B_{H})=<p(c)>\varsubsetneq\bar{M}=<\{c^{0}\}>$. Now,
being a graded ideal, $(\overline{M})^{2}=\overline{M}$ and this implies
that $(\overline{M}/\overline{P})^{2}=\overline{M}/\overline{P}$ in $%
\overline {L_{K}(E)}/\overline{P}\cong M_{n}(K[x,x^{-1}])$. Since $I^{2}\neq
I $ for any non-zero proper ideal $I$ in $K[x,x^{-1}]$, $M_{n}(K[x,x^{-1}])$
satisfies the same property as the ideal lattices of $M_{n}(K[x,x^{-1}])$
and $K[x,x^{-1}]$ are isomorphic (Proposition \ref{M=<c^0>}(ii)). Hence $%
\overline{M}=\overline{L_{K}(E)}$. By Proposition \ref{M=<c^0>}(ii), $\bar{P}%
=<p(c)>$ is a maximal ideal of $\overline{M}=$ $\overline{L_{K}(E)}$ and so $%
\overline {L_{K}(E)}/\overline{P}$ must be a simple ring. This is a
contradiction, since the ring $M_{n}(K[x,x^{-1}])$ is not simple. Thus $P$
must be a graded ideal of $L_{K}(E)$, say $P=I(H,S)$ where $H=P\cap E^{0}$.
Now, by Proposition \ref{graded-simple}, $L_{K}(E)/P\cong M_{n}(K[x,x^{-1}])$
has no non-zero proper graded ideals and hence $P$ is a maximal graded ideal
of $L_{K}(E)$ and hence $S=B_{H}$ and $P=I(H,B_{H})$. Since $%
M_{n}(K[x,x^{-1}])$ contains a multiplicative identity, $E^{0}\backslash
P=(E\backslash(H,B_{H}))^{0}$ is a finite set. This proves (2)

Assume (2). If $P$ is a maximal ideal, then clearly $P$ is an insulated
prime ideal as the simple ring $L_{K}(E)/P$ is insolated prime. Suppose $P$
is a maximal graded ideal with $E^{0}\backslash P$, a finite set. It is easy
to check that $P$ is a graded prime ideal and, as $L_{K}(E)$ is $\mathbb{Z}$%
-graded, $P$ is a prime ideal of $L_{K}(E)$ (see Proposition II.1.4, Chapter
II in \cite{NvO}). Hence $P=I(H,S)$ where $H=P\cap E^{0}$ and $(E\backslash
(H,S))^{0}$ is downward directed. By the maximality of $P$, $P=I(H,B_{H})$
and $L_{K}(E)/P\cong L_{K}(E\backslash(H,B_{H}))$ has no non-zero proper
graded ideals. Hence, by Proposition \ref{graded-simple}, $L_{K}(E)/P\cong
M_{\Lambda}(K[x,x^{-1}])$, where $\Lambda$ is some index set. Now, by
hypothesis, $(E\backslash(H,B_{H}))^{0}=$ $E^{0}\backslash P$ is a finite
set and so $L_{K}(E\backslash(H,B_{H}))$ has a multiplicative identity.
Hence $\Lambda$ must be a finite set and we conclude that $L_{K}(E)/P\cong
M_{n}(K[x,x^{-1}])$ for some positive integer $n$. This proves (1).
\end{proof}

\begin{remark}
\textrm{The property of being insulated prime is independent of the property
of being strongly prime. Note that any non-graded maximal ideal of L is
insulated prime by Theorem \ref{Type I}, but it is not strongly prime since,
by Theorem \ref{Strongly primes of LPAs}, a strongly prime ideal of $L$ must
be graded. Likewise, a graded prime ideal of the form $I(H,B_{H}\setminus{u}%
) $ is strongly prime since it is graded strongly primitive (Theorem \ref%
{Strongly primes of LPAs}), but is not a maximal graded ideal as it is
properly contained in the ideal $I(H, B_{H})$ and consequently it cannot be
an insulated prime ideal. }
\end{remark}

Next, we give two examples. The first one is example of an insulated prime
that is not strongly prime whereas the second one is example of a strongly
prime ideal that is not insulated prime.

\begin{example}
\textrm{Let $E$ be the following graph. }

\textrm{\bigskip }

\textrm{%
\begin{equation*}
\xymatrix{ \bullet^{v_1} \ar@(ul,ur) \ar@(dr,dl) & \bullet^{v_2} \ar[r]
\ar[l] & \bullet^{v_3} \ar@(ul,ur) }
\end{equation*}
}

\textrm{\bigskip }

\textrm{\noindent Let us denote the loops on vertex $v_{1}$ by $c_{1}$, $%
c_{2}$ and the loop on vertex $v_{3}$ by $c_{3}$. Now $H=\{v_{1}\}$ is a
hereditary saturated set. Let $P=I(H)+<p(c_{3})>$ where $p(x)$ is an
irreducible polynomial in $K[x,x^{-1}]$. Clearly $P$ is a non-graded ideal.
Now the graph $E\backslash H$ consists of a single edge ending at the cycle $%
c_{3}$ without exits and so $L_{K}(E\backslash H)\cong M_{2}(K[x,x^{-1}])$.
Since $p(x)$ is irreducible in $K[x,x^{-1}]$, $<p(c_{3})>$ will be a maximal
ideal in $L_{K}(E\backslash H)\cong M_{2}(K[x,x^{-1}])$, by Proposition
3.3(ii). Thus $P/I(H)=<p(c_{3})>$ will be a maximal ideal of $%
L_{K}(E)/I(H)\cong L_{K}(E\backslash H)$ and hence $P$ is a maximal
(non-graded) ideal in $L_{K}(E)$. By Theorem \ref{Type I LPA}, $P$ is an
insulated prime ideal. But $P$ is not strongly prime since $P$ is not a
graded ideal. }
\end{example}

\begin{example}
\textrm{Let $E^{0}=\{v_{1},v_{2},v_{3},v_{4}\}$, and the graph $E$ be as
below, 
\begin{equation*}
\begin{tikzpicture}[x=3cm, y=3cm,every edge/.style={draw,
postaction={decorate,decoration={markings,mark=at position 0.5 with
{\arrow{>}}}}}] \node[vertex] (v1) at (0,1) [label=above:$v_{1}$] {};
\node[vertex] (v2) at (1,1) [label=above:$v_{2}$] {}; \node[vertex] (v3) at
(1,0) [label=below:$v_{3}$] {}; \node[vertex] (v4) at (0,0)
[label=below:$v_{4}$] {}; \path [->] (v2) edge (v1) (v3) edge [right ] node
{\tt $(\infty)$} (v2) (v4) edge (v3) (v4) edge [left ] node {\tt $(\infty)$}
(v1) (v1) edge [in=180,out=90 ,looseness=20,->] (v1) (v2) edge
[in=90,out=0,looseness=20,->] (v2) (v3) edge [in=0,out=270,looseness=20,->]
(v3) (v4) edge [in=90,out=0,looseness=20,->] (v4) (v1) edge
[in=270,out=360,looseness=20,->] (v1) (v2) edge
[in=180,out=270,looseness=20,->] (v2) (v3) edge [in=90
,out=180,looseness=20,->](v3) (v4) edge [in=180,out=270,looseness=20,->](v4)
; \end{tikzpicture}
\end{equation*}
}

\textrm{\noindent Let $H=\{v_{1},v_{2}\}$. Then $H$ is a hereditary
saturated subset of $E^{0}$ and $B_{H}=\{v_{3},v_{4}\}$. Consider the graded
ideal $P=I(H,B_{H}\backslash\{v_{3}\})$. Now $(E\backslash(H,B_{H}\backslash%
\{v_{3}\}))^{0}=\{v_{3},v_{4},v_{3}^{\prime}\}$ is downward directed and
satisfies (Condition (K) and hence) Condition (L) and the strong CSP with
respect to $\{v_{3}^{\prime}\}$. Hence $P$ is strongly prime by Theorem
4.10. But $P$ is not a maximal graded ideal, as $P\subsetneqq I(H,B_{H})$
and hence $P$ not an insulated prime, by Theorem \ref{Type I LPA}. }
\end{example}

Next, we describe conditions under which an ideal of a Leavitt path algebra
can be factored as a product of (finitely many) insulated prime ideals. In
the proof, we shall be using the following basic result.

\begin{lemma}
\label{star} $($Lemma 2.7.1 \cite{AAS}$)$ A graph $E$ is a comet (with a no
exit cycle $c$ based at a vertex $v$) if and only if $L_{K}(E)\cong
M_{\Lambda}(K[x,x^{-1}])$ where $\Lambda$ is the set of all paths that end
at $v$ but not include the entire cycle $c$.
\end{lemma}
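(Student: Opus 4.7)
The plan is to prove both implications using the ideal-theoretic machinery already built up, together with a direct construction of the matrix-unit isomorphism.

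For the forward direction, suppose $E$ is a comet with no-exit cycle $c$ based at $v$. Let $M = \langle\{c^{0}\}\rangle$ be the ideal generated by the vertices on $c$. I would first observe that since every path in $E$ terminates at a vertex on $c$, every vertex $u\in E^{0}$ satisfies $u\geq w$ for some $w\in\{c^{0}\}$; combined with row-finiteness and an induction along the tree of paths, the CK-2 relation $u=\sum_{s(e)=u}ee^{\ast}$ lets me express $u$ as a finite sum of products $\alpha\alpha^{\ast}$ where each $\alpha$ is a path with $r(\alpha)\in\{c^{0}\}$. This forces $u\in M$ and hence $L_{K}(E)=M$. Now Proposition~\ref{M=<c^0>}(i) yields $L_{K}(E)=M\cong M_{\Lambda}(K[x,x^{-1}])$ for some index set $\Lambda$.

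It remains to identify $\Lambda$ as the set of paths ending at $v$ not traversing all of $c$. I would define $\phi\colon M_{\Lambda}(K[x,x^{-1}])\to L_{K}(E)$ on matrix units by
\[
\phi(E_{\alpha,\beta}\otimes f(x))=\alpha\,f(c)\,\beta^{\ast},\qquad \alpha,\beta\in\Lambda,\ f(x)\in K[x,x^{-1}],
\]
where $f(c)\in vL_{K}(E)v$ is well-defined since $vL_{K}(E)v\cong K[x,x^{-1}]$ via $v\mapsto 1$, $c\mapsto x$, $c^{\ast}\mapsto x^{-1}$ (a standard fact for no-exit cycles). Using the CK-1 relations and $\beta^{\ast}\gamma=\delta_{\beta,\gamma}v$ for $\beta,\gamma\in\Lambda$ (since neither contains the full cycle, they can only cancel when identical), I would verify $\phi$ is an algebra homomorphism. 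Surjectivity follows because any monomial $\mu\nu^{\ast}$ in $L_{K}(E)$ can be rewritten: extend $\mu$ and $\nu$ using CK-2 to land at $v$, then factor out maximal $c$-power prefixes/suffixes to write $\mu\nu^{\ast}$ as a sum of terms $\alpha c^{n}\beta^{\ast}$ with $\alpha,\beta\in\Lambda$. Injectivity follows from the graded structure of the matrix ring and the linear independence of distinct terms $\alpha c^{n}\beta^{\ast}$ in $L_{K}(E)$ (different $(\alpha,\beta)$ land in different corners $s(\alpha)L_{K}(E)s(\beta)$ under the CK-relations).

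For the converse, suppose $L_{K}(E)\cong M_{\Lambda}(K[x,x^{-1}])$. Since the matrix ring is graded-simple but not simple (the ideal lattice matches $K[x,x^{-1}]$), Proposition~\ref{graded-simple} applies directly and gives that $E$ is row-finite, downward directed, and contains a cycle $c$ without exits based at some vertex $v$. To upgrade ``downward directed'' to the comet condition, I would argue as follows: for any vertex $u\in E^{0}$, downward directedness yields $w$ with $u\geq w$ and $v\geq w$; since $c$ has no exits, $w\in\{c^{0}\}$ and hence $u\geq v$. Combined with row-finiteness and the absence of sinks (which would contradict $L_{K}(E)$ being the matrix ring over $K[x,x^{-1}]$), every maximal path in $E$ must terminate in the cycle, establishing that $E$ is a comet. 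The identification of $\Lambda$ as the specified path set is then forced by counting the rank of $L_{K}(E)$ as a $vL_{K}(E)v$-module on each side.

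The main obstacle is the forward direction: verifying that $\phi$ is a well-defined isomorphism — in particular, that every monomial $\mu\nu^{\ast}$ rewrites as a finite $K[x,x^{-1}]$-linear combination of the basis elements $\alpha c^{n}\beta^{\ast}$, which requires a careful case analysis depending on how $\mu$ and $\nu$ interact with the cycle $c$ and relies essentially on the no-exit hypothesis to control the ghost-edge cancellations.
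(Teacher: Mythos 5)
The paper gives no proof of this lemma at all --- it is quoted directly from [AAS, Lemma 2.7.1] --- so your argument must stand on its own. Your forward direction is essentially the standard proof and is sound: the comet hypothesis guarantees (via K\"onig's lemma and row-finiteness, a point worth making explicit) that the tree of paths leaving any vertex and avoiding $\{c^{0}\}$ is finite, so the saturation induction places every vertex in $\langle\{c^{0}\}\rangle$ and $L_{K}(E)=\langle\{c^{0}\}\rangle$; the matrix-unit map $E_{\alpha,\beta}\otimes f\mapsto\alpha f(c)\beta^{\ast}$ is the right isomorphism, and your orthogonality claim $\beta^{\ast}\gamma=\delta_{\beta,\gamma}v$ for $\beta,\gamma\in\Lambda$ is correctly forced by the no-exit hypothesis (any common extension would have to wind around all of $c$). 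For injectivity, the clean route is the corner extraction $X\mapsto\alpha^{\ast}X\beta\in vL_{K}(E)v\cong K[x,x^{-1}]$, since your ``different corners'' remark does not separate two distinct $\alpha$'s with the same source.

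The converse, however, contains a genuine gap. From $L_{K}(E)\cong M_{\Lambda}(K[x,x^{-1}])$ you correctly deduce that $L$ is graded-simple but not simple (graded ideals of $L$ are idempotent, while no proper nonzero ideal of $K[x,x^{-1}]$ is), hence $E$ is row-finite, downward directed and has a cycle $c$ without exits. But your closing step --- that these properties together with the absence of sinks force every maximal path to terminate in $c$ --- is false. Let $E^{0}=\{v,u_{1},u_{2},\dots\}$ with a loop at $v$ and, for each $i$, edges $u_{i}\to u_{i+1}$ and $u_{i}\to v$. This graph is row-finite, downward directed, sink-free, and its loop has no exits, yet the infinite path $u_{1}\to u_{2}\to\cdots$ never meets the cycle, $\{v\}$ is a proper nonempty hereditary saturated subset, and $E$ is not a comet. (The same example shows one cannot stop at the three graph conditions listed in Proposition \ref{graded-simple}(2); they are strictly weaker than graded-simplicity.) The missing ingredient is graded-simplicity itself: it forces $\langle\{c^{0}\}\rangle=L$, i.e.\ $E^{0}$ equals the hereditary saturated closure of $\{c^{0}\}$, and an induction on the saturation level, combined with row-finiteness, then shows that every infinite path eventually enters $c$ and every finite path extends into $c$ --- which is exactly the comet condition. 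With that substitution the converse goes through, and the identification of $\Lambda$ is then supplied by your forward direction rather than by a rank count.
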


\begin{theorem}
\label{I = Product of Insul. Primes} The following properties are equivalent
for an ideal $I$ of a Leavitt path algebra $L=L_{K}(E)$ with $gr(I)=I(H,S)$:

\begin{enumerate}
\item $I$ is a product of (finitely many) insulated prime ideals of $L$;

\item $I(H,S)=gr(I)=Q_{1}\cap\cdots\cap Q_{m}$ is an irredundant
intersection of $m$ graded ideals each of which is either an insulated prime
(hence maximal graded) ideal or a maximal graded ideal which is not
insulated prime;

\item $I(H,S)=gr(I)=Q_{1}\cap\cdots\cap Q_{m}$ is an irredundant
intersection of $m$ graded ideals each of which is either an insulated prime
(which is maximal graded) ideal or a maximal graded ideal which is not
insulated prime and $I=I(H,S)+\Sigma_{t=1}^{k}<f_{t}(c_{t})>$, where $k\leq
m $, for each $t=1,\ldots, k$, $c_{t}$ is a cycle without exits in $%
E\backslash(H,S)$ based at a vertex $v_{t}$ and $f_{t}(x)\in K[x]$ with a
non-zero constant term.

\item $L/I(H,S)=L_{1}\oplus\cdots\oplus L_{m}$, where $\oplus$ is a graded
ring direct sum and, for each $j=1,\ldots, m$, $L_{j}\cong
M_{\Lambda_{j}}(K[x,x^{-1}])$ where $\Lambda_{j}$ is a finite or infinite
index set;

\item The quotient graph $E\backslash(H,S)$ is an irredundant union of
finitely many finite comets.
\end{enumerate}
\end{theorem}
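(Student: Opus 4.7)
My plan is to pivot on the graded part $gr(I)=I(H,S)$ and its quotient $\bar{L}=L/I(H,S)\cong L_K(E\setminus(H,S))$, proving the equivalence by cycling $(1)\Rightarrow(2)\Rightarrow(3)\Rightarrow(4)\Rightarrow(5)\Rightarrow(1)$. For $(1)\Rightarrow(2)$: if $I=P_1\cdots P_n$ is a product of insulated prime ideals, then by Theorem \ref{Type I} together with Corollary \ref{I maximal => gr(I) maximal graded}, each $gr(P_i)$ is a maximal graded ideal. Because the product $gr(P_1)\cdots gr(P_n)$ is a graded ideal contained in $I$, Lemma \ref{Property of graded ideal} gives
\[
gr(I)=gr(P_1)\cap\cdots\cap gr(P_n),
\]
and removing redundant factors yields (2). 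For $(2)\Rightarrow(3)$: since maximal graded ideals are prime, Lemma \ref{Influence of gr(I0 factorization} applies directly and produces the non-graded decomposition $I=gr(I)+\sum_{t=1}^{k}<f_t(c_t)>$ with $k\leq m$.

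For $(3)\Rightarrow(4)$: any two distinct maximal graded ideals $Q_i\neq Q_j$ satisfy $Q_i+Q_j=L$, since $Q_i+Q_j$ is a graded ideal strictly containing the maximal graded $Q_i$. A graded Chinese Remainder argument then yields $L/I(H,S)\cong_{gr}\bigoplus_{j=1}^{m}L/Q_j$, and each graded-simple quotient $L/Q_j$ is graded-isomorphic to some $M_{\Lambda_j}(K[x,x^{-1}])$ by Proposition \ref{graded-simple}. For $(4)\Rightarrow(5)$: by Lemma \ref{star} each $M_{\Lambda_j}(K[x,x^{-1}])$ is the Leavitt path algebra of a comet $C_j$, and the graded direct sum $L_K(E\setminus(H,S))\cong_{gr}\bigoplus_j L_K(C_j)$ corresponds to the disjoint (hence irredundant) union decomposition $(E\setminus(H,S))^0=\bigsqcup_j C_j^0$ of the underlying graph.

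Finally, for $(5)\Rightarrow(1)$: writing $\bar{L}\cong\bigoplus_{j=1}^m L_K(C_j)\cong\bigoplus_j M_{n_j}(K[x,x^{-1}])$, let $P_j$ be the ideal of $L$ whose image in $\bar{L}$ is $\bigoplus_{k\neq j} L_K(C_k)$; then $L/P_j\cong M_{n_j}(K[x,x^{-1}])$, so $P_j$ is insulated prime by Theorem \ref{Type I}, and $I(H,S)=\bigcap_j P_j=\prod_j P_j$ by Lemma \ref{Property of graded ideal}. To incorporate the non-graded summands $\sum_{t=1}^{k}<f_t(c_t)>$ from $(3)$, I would factor each $f_t(x)=p_{t,1}^{a_{t,1}}(x)\cdots p_{t,\ell_t}^{a_{t,\ell_t}}(x)$ into distinct irreducibles in $K[x,x^{-1}]$ and set $\tilde{P}_{t,i}=P_t+<p_{t,i}(c_t)>$; the quotient of $L$ by $\tilde{P}_{t,i}$ is a simple matrix ring over a field, so $\tilde{P}_{t,i}$ is a non-graded maximal (hence insulated prime) ideal. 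Using the distributivity of the ideal lattice of $L$ (\cite{R-2}, Theorem 4.3) together with Proposition \ref{M=<c^0>}(ii), I would verify
\[
I=\Bigl(\prod_{j=1}^{m} P_j\Bigr)\cdot\prod_{t=1}^{k}\prod_{i=1}^{\ell_t}\tilde{P}_{t,i}^{a_{t,i}},
\]
giving $I$ as a product of insulated prime ideals.

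The principal obstacle will be the last step: expressing the non-graded part as a product of insulated primes and verifying that the resulting total product equals $I$. The crucial algebraic tools are Proposition \ref{M=<c^0>}(ii), which turns $<f(c)g(c)>$ into $<f(c)>\cdot<g(c)>$, together with the distributive law for ideals and the commutativity of ideal multiplication (\cite{R-2}, Theorem 3.4; \cite{AAS}, Corollary 2.8.17). The bookkeeping is delicate because each $\tilde{P}_{t,i}$ lies above the particular graded factor $P_t$ and no other $P_j$, so one must show that mixed products $\tilde{P}_{t,i}\cdot P_j$ for $j\neq t$ reduce via Lemma \ref{Property of graded ideal}(a) to intersections that neither absorb the non-graded contribution into $P_t$ nor collapse the irredundancy coming from Lemma \ref{Influence of gr(I0 factorization}.
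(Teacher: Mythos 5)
Your chain $(1)\Rightarrow(2)\Rightarrow(3)\Rightarrow(4)\Rightarrow(5)\Rightarrow(1)$ follows the paper's route for the first four implications (and your attempt at $(5)\Rightarrow(1)$ addresses a real need, since the paper only closes its cycle at $(5)\Rightarrow(2)$ and never explicitly returns to $(1)$), but the displayed product formula at the end is wrong, and the error is structural rather than a matter of bookkeeping. Each $\tilde P_{t,i}$ contains $P_{t}$, and each $P_{j}$ is graded, hence idempotent, so
\[
\prod_{t=1}^{k}\prod_{i=1}^{\ell_{t}}\tilde P_{t,i}^{a_{t,i}}\supseteq\prod_{t=1}^{k}P_{t}^{\sum_{i}a_{t,i}}=\prod_{t=1}^{k}P_{t}\supseteq\prod_{j=1}^{m}P_{j}=gr(I).
\]
Since $\prod_{j=1}^{m}P_{j}=gr(I)$ is graded, Lemma \ref{Property of graded ideal}(a) converts your product into an intersection:
\[
\Bigl(\prod_{j=1}^{m}P_{j}\Bigr)\cdot\prod_{t,i}\tilde P_{t,i}^{a_{t,i}}=gr(I)\cap\prod_{t,i}\tilde P_{t,i}^{a_{t,i}}=gr(I),
\]
which is strictly smaller than $I$ whenever $I$ is non-graded, i.e.\ whenever $k\geq1$. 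The graded factors $P_{1},\dots,P_{k}$ must be \emph{replaced} by, not multiplied against, the non-graded ideals sitting over them: using Proposition \ref{M=<c^0>}(ii) and the idempotence of $P_{t}$ one gets $\prod_{i}\tilde P_{t,i}^{a_{t,i}}=P_{t}+<f_{t}(c_{t})>=A_{t}$, and the correct factorization is $I=A_{1}\cdots A_{k}\cdot P_{k+1}\cdots P_{m}$, whose verification is the induction already carried out in the proof of Theorem \ref{I intersection of irreducibles}, $(3)\Rightarrow(1)$, together with the product-versus-intersection equivalence established there.

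A second, smaller error is in $(4)\Rightarrow(5)$: a graded ring direct sum $\bar L=L_{1}\oplus\cdots\oplus L_{m}$ does \emph{not} decompose $(E\backslash(H,S))^{0}$ into disjoint comets. A vertex $v$ splits as $v=v_{1}+\cdots+v_{m}$ with the $v_{j}\in L_{j}$ orthogonal idempotents, and several $v_{j}$ can be non-zero; for instance, in the graph $u\leftarrow v\rightarrow w$ with no-exit loops at $u$ and $w$, the algebra is a ring direct sum of two copies of $M_{2}(K[x,x^{-1}])$ while the two comets $\{u,v\}$ and $\{v,w\}$ overlap at $v$. The irredundant (not disjoint) union must instead be extracted as the paper does, from the maximal graded ideals $A_{j}=\oplus_{i\neq j}L_{i}=I(H_{j},B_{H_{j}})$, whose quotients are the comets and whose intersection is zero. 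The remaining steps $(1)\Rightarrow(2)\Rightarrow(3)\Rightarrow(4)$ match the paper's argument and are fine modulo the paper's own implicit conventions.
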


\begin{proof}
Assume (1) so $I=P_{1}\cdots P_{n}$ is a product of insulated prime ideals $%
P_{j}$. By Theorem \ref{Type I}, each ideal $P_{j}$ is either a maximal
ideal or a maximal graded ideal such that $L/P_{j}\cong
M_{n_{j}}(K[x,x^{-1}])$ \ where $n_{j}$ is a positive integer. Let $%
Q_{j}=gr(P_{j})$ $\ $for $j=1,\ldots, n$. Then $g(I)=Q_{1}\cdots Q_{n}=$ $%
Q_{1}\cap\cdots\cap Q_{n}$, by Lemma \ref{Property of graded ideal}. If
necessary, after removing appropriate ideals and after re-indexing, we get $%
g(I)=$ $Q_{1}\cap\cdots\cap Q_{m}$, an irredundant intersection of graded
ideals with $m\leq n$. Here, for each $j=1,\ldots,m$, either $Q_{j}$ is a
graded insulated prime ideal or $Q_{j}=gr(P_{j})$ is a maximal graded ideal
which is not insulated prime with $P_{j}$ a non-graded maximal ideal of $L$
(Corollary \ref{I maximal => gr(I) maximal graded}. This proves (2).

Now (2) $\Longleftrightarrow$ (3). Because, a maximal graded ideal of $L$ is
a prime ideal and so the equivalence of (2) and (3) follows from the
equivalence of conditions (2) and (3) of Theorem \ref{I intersection of
irreducibles}.

Assume (2). Then, in $\bar{L}=L/I(H,S)$, $\bar{0}=\bar{Q}_{1}\cap\cdots\cap%
\bar{Q}_{m}$, where, for each $j=1,\ldots,m$, $\bar {Q}_{j}=$ $Q_{j}/I(H,S)$%
. Consider the map $\theta:\bar{L}\longrightarrow$ $\bar{L}/\bar{Q}%
_{1}\oplus\cdots\oplus$ $\bar{L}/\bar{Q}_{m}$ given by $a\longmapsto(a+\bar{Q%
}_{1},\ldots, a+\bar{Q}_{m})$, where $\oplus$ is a graded direct sum. Now $%
\theta$ is clearly a monomorphism. It is also a graded morphism, since the
coset map $\bar{L}\longrightarrow\bar{L}/\bar {Q}_{j}$ is a graded morphism
for all $j$. To show that $\theta$ is an epimorphism, first note that the
Chinese Remainder Theorem holds in the Leavitt path algebra $\bar{L}\cong
L_{K}(E\backslash(H,S))$ (see Remark after Theorem 4.3 n \cite{R-2}). Also,
, by maximality, $\bar{Q}_{i}+\bar{Q}_{j}=\bar{L}$ for all $i,j$ with $i\neq
j$. Consequently, given any element $x=(x_{1}+\bar{Q}_{1},\ldots, x_{m}+\bar{%
Q}_{m})\in\bar{L}/\bar{Q}_{1}\oplus\cdots\oplus$ $\bar{L}/\bar{Q}_{m}$,
there is an element $y\in\bar{L}$ such that $y\equiv x_{j}(mod\;\bar{Q}_{j})$
for all $j=1,\ldots, m$. It is then clear that $\theta(y)=x$. Thus $\theta$
is a graded isomorphism and $\bar{L}\cong_{gr}\bar{L}/\bar{Q}%
_{1}\oplus\cdots\oplus$ $\bar{L}/\bar{Q}_{m}$. If the graded ideal $Q_{j}$
is an insulated prime ideal, then, by Theorem \ref{Type I} $\bar{L}/\bar{Q}%
_{j}\cong L/Q_{j}\cong M_{n_{j}}(K[x,x^{-1}])$ where $n_{j}$ is some
positive integer. On the other hand, if $Q_{j}=gr(P_{j})$ is a maximal
graded ideal which is not insulated prime (with $P_{j}$ a non-graded maximal
ideal of $L$), then, by Corollary \ref{I maximal => gr(I) maximal graded}, $%
\bar{L}/\bar{Q}_{j}\cong L/Q_{j}\cong M_{\Lambda_{j}}(K[x,x^{-1}])$ where $%
\Lambda_{j}$ is an infinite index set. This proves (4).

Assume (4) so $\bar{L}=L/I(H,S)=L_{1}\oplus\cdots\oplus L_{m}$ where $\oplus$
is a graded ring direct sum and, for each $j=1,\ldots, m$, $L_{j}\cong
M_{\Lambda_{j}}(K[x,x^{-1}])$ where $\Lambda_{j}$ is a finite or infinite
index set. As $L_{j}\cong M_{\Lambda_{j}}(K[x,x^{-1}])$ is graded-simple, we
have for each $j=1,\ldots,m$, $A_{j}=\oplus_{i\neq j,i=1}^{i=m}L_{i}$ $\ $is
a maximal graded ideal of $\bar{L}\cong L_{K}(E\backslash(H,S))$ and so will
be of the form $A_{j}=I(H_{j},B_{H_{j}})$ where $H_{j}=A_{j}\cap(E%
\backslash(H,S))^{0}$. Now $L_{K}(E\backslash (H,S))/I(H_{j},B_{H_{j}})\cong%
\bar{L}/A_{j}\cong L_{j}\cong M_{\Lambda_{j}}(K[x,x^{-1}])$ and so, by Lemma %
\ref{star}, $M_{j}=(E\backslash (H,S))\backslash(H_{j},B_{H_{j}})$ is a
comet. Since $\cap_{j=1}^{m}A_{j}=\{0\}$, $E\backslash(H,S)^{0}=%
\cup_{j=1}^{m}M_{j}$, a union of finite number of comets. This proves (5).

Assume (5). So $E\backslash(H,S)=\cup_{j=1}^{m}M_{j}$ is a union of comets.
If $H_{j}=(E\backslash(H,S))\backslash M_{j}$, then $\bar{Q}%
_{j}=I(H_{j},B_{H_{j}})$ is a graded ideal of $\bar{L}=L_{K}(E%
\backslash(H,S))\cong L/I(H,S)$ and $\cap_{j=1}^{m}\bar{Q}_{j}=0$. Then 
\begin{equation*}
\bar{L}/\bar{Q}_{j}\cong
L_{K}[(E\backslash(H,S))\backslash(H_{j}.B_{H_{j}})]\cong L_{K}(M_{j})\cong
M_{\Lambda_{j}}(K[x,x^{-1}])
\end{equation*}
where $\Lambda_{j}$ is an finite or infinite index set according as $M_{j}$
is a finite or infinite comet. Now, for each $j$, $\bar{Q}_{j}=Q_{j}/I(H,S)$
for some ideal $Q_{j}\supseteq I(H,S)$. Clearly, $gr(I)=I(H,S)=%
\cap_{j=1}^{m}Q_{j}$ where \ the $Q_{j}$ are graded ideals which are either
insulated prime or non-maximal ideals which are maximal graded ideals
according as $\Lambda_{j}$ is a finite or an infinite index set. This proves
(2).
\end{proof}

\begin{remark}
As noted in Remark \ref{Uniques prod. str. primes}, factorization of an
ideal $I$ of $L$ as an irredundant product or intersection of finitely many
insulated prime ideals is unique except for the order of the factors due to
the fact that an insulated prime ideal is always a prime ideal.
\end{remark}

We next consider the case when every ideal of $L$ is a product of finitely
many insulated prime ideals.

\begin{theorem}
\label{Every ideal insulated prime} The following properties are equivalent
for any Leavitt path algebra $L=L_{K}(E)$:

\begin{enumerate}
\item Every ideal of $L$ is a product of (finitely many) insulated prime
ideals;

\item $L\cong_{gr}\oplus_{j=1}^{m}L_{j}$ is a graded ring direct sum of
matrix rings $L_{j}\cong_{gr}M_{\Lambda_{j}}(K[x,x^{-1}])$ where $%
\Lambda_{j} $ is a suitable index set.
\end{enumerate}
\end{theorem}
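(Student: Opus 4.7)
The plan is to deduce this theorem as a direct corollary of Theorem \ref{I = Product of Insul. Primes}, applied to two different ideals: the zero ideal for one direction and an arbitrary ideal for the other.

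For (1) $\Rightarrow$ (2), I would simply apply the hypothesis to the zero ideal. Since $\{0\} = I(\emptyset,\emptyset)$ is itself graded, we have $gr(\{0\}) = \{0\}$, and by the equivalence (1) $\Leftrightarrow$ (4) of Theorem \ref{I = Product of Insul. Primes}, we immediately obtain $L = L/\{0\} \cong_{gr} L_1 \oplus \cdots \oplus L_m$ as a graded ring direct sum in which each $L_j \cong_{gr} M_{\Lambda_j}(K[x,x^{-1}])$ for some index set $\Lambda_j$.

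For (2) $\Rightarrow$ (1), I would let $I$ be an arbitrary ideal of $L$ and try to verify condition (4) of Theorem \ref{I = Product of Insul. Primes} for $I$. The key observation is that, under the decomposition $L \cong_{gr} \bigoplus_{j=1}^m L_j$, any graded ideal of $L$ splits as $\bigoplus_{j=1}^m J_j$ with each $J_j$ a graded ideal of $L_j$. By Proposition \ref{graded-simple}, each $L_j \cong M_{\Lambda_j}(K[x,x^{-1}])$ is graded-simple, so $J_j$ is either $\{0\}$ or $L_j$. Consequently $gr(I)$ corresponds under the isomorphism to $\bigoplus_{j \in J_0} L_j$ for some subset $J_0 \subseteq \{1,\dots,m\}$, and
\[
L / gr(I) \;\cong_{gr}\; \bigoplus_{j \notin J_0} L_j
\]
is again a finite graded direct sum of matrix rings of the prescribed form. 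This is exactly condition (4) of Theorem \ref{I = Product of Insul. Primes} applied to $I$, and the equivalence (4) $\Leftrightarrow$ (1) there then gives that $I$ is a product of finitely many insulated prime ideals.

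Since the bulk of the work was already carried out in Theorem \ref{I = Product of Insul. Primes}, I do not anticipate any real obstacle here. The only conceptual ingredient beyond direct quotation is the graded-simplicity of $M_\Lambda(K[x,x^{-1}])$ needed in the reverse direction to force the graded ideal $gr(I)$ to split compatibly with the direct sum decomposition of $L$, and this is already recorded in Proposition \ref{graded-simple}.
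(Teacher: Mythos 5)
Your forward direction $(1)\Rightarrow(2)$ is exactly the paper's: apply Theorem \ref{I = Product of Insul. Primes} with $I=\{0\}$ and read off condition (4). For the reverse direction $(2)\Rightarrow(1)$ you take a genuinely different route. The paper argues directly: every nonzero ideal of $K[x,x^{-1}]$ is a product of maximal (hence insulated prime) ideals, the lattice isomorphism of Proposition \ref{M=<c^0>}(iii) transfers this to each $M_{\Lambda_j}(K[x,x^{-1}])$, and an induction on the number of ring direct summands then factors an arbitrary ideal of $L$. You instead verify condition (4) of Theorem \ref{I = Product of Insul. Primes} for an arbitrary ideal $I$: since $L$ has local units, any ideal splits across the ring direct sum, gradedness of $gr(I)$ and graded-simplicity of each $M_{\Lambda_j}(K[x,x^{-1}])$ (Proposition \ref{graded-simple}) force $gr(I)=\bigoplus_{j\in J_0}L_j$, so $L/gr(I)\cong_{gr}\bigoplus_{j\notin J_0}L_j$ is again of the required form, and you invoke $(4)\Rightarrow(1)$ of that theorem. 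This is a clean reuse of the earlier machinery and is logically sound as long as the stated equivalence is taken at face value; be aware, though, that the written proof of Theorem \ref{I = Product of Insul. Primes} establishes the cycle $(1)\Rightarrow(2)\Rightarrow(4)\Rightarrow(5)\Rightarrow(2)\Leftrightarrow(3)$ and never explicitly returns to (1), so the implication $(4)\Rightarrow(1)$ you lean on is precisely the one the paper leaves least explicit --- the paper's direct argument here essentially supplies that missing content, which is presumably why the authors did not argue as you do. You should also dispose of the degenerate case $J_0=\{1,\dots,m\}$ (i.e.\ $gr(I)=L$, hence $I=L$) separately, since condition (4) with an empty direct sum is not meaningful; this is a triviality but worth a sentence.
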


\begin{proof}
Assume (1). Since $\{0\}$ is a product of insulated prime ideals, applying
Theorem \ref{I = Product of Insul. Primes} (iv) \ with $I=\{0\}$, we obtain
(2).

Assume (2), so that $L=L_{1}\oplus\cdot\cdot\cdot\oplus L_{m}$, where $%
\oplus $ is a graded ring direct sum and, for each $j=1,\cdot\cdot\cdot,m$, $%
L_{j}\cong M_{\Lambda_{j}}(K[x,x^{-1}])$ where $\Lambda_{j}$ is a finite or
infinite index set. Note that $\{0\}$ is a maximal graded ideal of $%
K[x,x^{-1}]$ and so is an insulated prime ideal. Also every non-zero ideal
of $K[x,x^{-1}]$ is a product of maximal (hence insulated prime) ideals of $%
K[x,x^{-1}]$. By Proposition \ref{M=<c^0>}(ii), the same properties hold for
ideals of $L_{j}\cong M_{\Lambda_{j}}(K[x,x^{-1}])$ for all $j=1,\ldots, m$.
Since $\oplus$ is a ring direct sum, a simple induction on $m$ shows that
every ideal of $L$ is a product of insulated prime ideals. This proves (1).
\end{proof}

As an illustration, we have the following example.

\begin{example}
\textrm{Let }E = F $\cup $ F\textrm{\ be the disjoint union of two copies of
\ the graph F as shown below:}

\bigskip

\[
\xymatrix{ && \bullet{u}\ar@{->}[rr] && \bullet{v} \ar@(ur,ul)}\xymatrix{ && \bullet{u'}\ar@{->}[rr] && \bullet{v'}\ar@(ur,ul)}
\]

\bigskip

\noindent Now $L_{K}(F)\cong _{gr}M_{2}(K[x,x^{-1}]$ and so $L_{K}(E)$\textrm{\bigskip 
}$\cong _{gr}(M_{2}(K[x,x^{-1}]\oplus M_{2}(K[x,x^{-1}])$.
\end{example}

The next theorem describes when every ideal of a Leavitt path algebra is
insulated prime.

\begin{theorem}
Let $L=L_{K}(E)$ be any Leavitt path algebra. Then we have the following:

\begin{enumerate}
\item Every ideal of $L$ is insulated prime if and only if $L$ is a simple
ring;

\item Every non-zero ideal of $L$ is insulated prime if and only if either $%
L $ is a simple ring \ or $L$ contains exactly one non-zero ideal $I$ which
is graded and $L/I\cong M_{n}(K[x,x^{-1}])$ for some positive integer $n$.
\end{enumerate}
\end{theorem}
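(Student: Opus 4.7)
For part (1), I would argue both directions via Theorem \ref{Type I LPA}. The ``if'' direction is immediate: if $L$ is simple, then $\{0\}$ is the only proper ideal, and Theorem \ref{Type I LPA} gives that a simple ring is insulated prime, so $\{0\}$ is an insulated prime ideal. For the converse, assuming every ideal is insulated prime forces $\{0\}$ to be, meaning $L$ itself is insulated prime; Theorem \ref{Type I LPA} then forces $L$ to be either simple or isomorphic to $M_n(K[x,x^{-1}])$. To eliminate the second case, I would exhibit a non-insulated-prime ideal: for an irreducible $p(x) \in K[x,x^{-1}]$, Proposition \ref{M=<c^0>}(iii) lifts the ideal $(p(x)^2)$ to an ideal $I$ of $L$, and the quotient $L/I \cong M_n(K[x,x^{-1}]/(p(x)^2))$ contains a non-zero nilpotent ideal (the class of $p(x)$), hence is not prime and thus not insulated prime by Theorem \ref{Type I LPA} --- a contradiction.

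For part (2), the backward direction splits into two cases handled by Theorem \ref{Type I}. When $L$ is simple the claim holds vacuously (no non-zero proper ideals to check). When $L$ contains a unique non-zero proper ideal $I$ that is graded with $L/I \cong M_n(K[x,x^{-1}])$, this $I$ is maximal graded with the required quotient, so Theorem \ref{Type I}(2) gives that $I$ is insulated prime. For the forward direction, I would assume every non-zero ideal is insulated prime and $L$ is not simple, and aim to deduce the stated structure. Theorem \ref{Type I} classifies each non-zero proper ideal as either a maximal ideal (with simple quotient) or a maximal graded ideal with $L/I \cong M_n(K[x,x^{-1}])$. The critical step is: if any non-zero $I$ satisfies $L/I \cong M_n(K[x,x^{-1}])$, then via Proposition \ref{M=<c^0>}(iii) the non-zero proper ideals of $L$ strictly containing $I$ correspond bijectively to the non-zero proper ideals of $K[x,x^{-1}]$; lifting $(p(x)^2)$ for irreducible $p$ gives a non-zero ideal $J$ with $L/J \cong M_n(K[x,x^{-1}]/(p(x)^2))$ not insulated prime, contradicting the hypothesis. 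This forces $I$, when it exists, to be the unique non-zero proper ideal of $L$.

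The main obstacle in the forward direction of (2) is ruling out the remaining configuration in which every non-zero proper ideal is a maximal ideal with simple quotient and none has the $M_n(K[x,x^{-1}])$ structure. To close this case I would combine the Chinese Remainder Theorem for Leavitt path algebras (see the Remark after Theorem 4.3 of \cite{R-2}) with the fact that the Jacobson radical of $L$ is zero, in order to argue that either the intersection of maximal ideals collapses $L$ to the simple case or forces the existence (and uniqueness) of a distinguished graded ideal of the stated form. The delicate point is extracting the ``exactly one'' condition from these lattice-theoretic constraints; in practice this will require carefully tracking the interplay between Theorem \ref{Type I}(2) and Corollary \ref{I maximal => gr(I) maximal graded}, which pins down the graded part of any non-graded maximal ideal.
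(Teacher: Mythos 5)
Your part (1) is essentially correct and close in spirit to the paper's argument: both reduce, via Theorem \ref{Type I} applied to the ideal $\{0\}$, to the case $L\cong M_{n}(K[x,x^{-1}])$, and then exhibit a non--insulated-prime ideal there. The paper does this by noting that a non-zero non-maximal ideal of $M_{n}(K[x,x^{-1}])$ would have to be a maximal \emph{graded} ideal (Theorem \ref{Type I}), which is impossible since $M_{n}(K[x,x^{-1}])$ is graded-simple; your alternative of lifting $\langle p(x)^{2}\rangle$ through Proposition \ref{M=<c^0>}(iii) and observing that the quotient has a non-zero nilpotent ideal is equally valid. One small repair: $M_{n}(K[x,x^{-1}]/\langle p(x)^{2}\rangle)$ is not presented as a Leavitt path algebra, so you cannot invoke Theorem \ref{Type I LPA} to conclude it is not insulated prime; instead cite the elementary fact (immediate from the definition of an insulator) that an insulated prime ring is a prime ring.

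Part (2) has a genuine gap, in fact two. First, your ``critical step'' is a non sequitur: from the existence of a non-zero ideal $I$ with $L/I\cong M_{n}(K[x,x^{-1}])$ you correctly manufacture a non-zero proper ideal $J\supsetneq I$ (the lift of $\langle p(x)^{2}\rangle$) whose quotient is not prime, contradicting the hypothesis that every non-zero ideal is insulated prime --- but the logical conclusion of that contradiction is that \emph{no} such $I$ can exist, not that such an $I$ is the unique non-zero proper ideal. (Pushed to its end, your computation actually shows the hypothesis of (2) is incompatible with the second alternative of the statement: if $L/I\cong M_{n}(K[x,x^{-1}])$ then $L$ has many non-zero ideals strictly between $I$ and $L$, so ``exactly one non-zero ideal'' cannot hold either. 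This is a real tension with the statement and with the paper's own proof, which asserts without justification that every non-zero graded insulated prime ideal falls into the $M_{n}(K[x,x^{-1}])$-quotient branch of Theorem \ref{Type I} rather than the maximal-ideal branch.) Second, the configuration you admit you cannot close --- all non-zero proper ideals maximal with simple quotients --- is not closed by the Chinese-Remainder/Jacobson-radical sketch, and no lattice-theoretic argument of that kind will close it: the graph with two isolated vertices gives $L\cong K\times K$, which is not simple yet has both of its non-zero proper ideals maximal with simple quotient, hence insulated prime. Finally, you never reproduce the paper's key device for (2), namely that a non-graded ideal $I(H,S)+\sum_{t}\langle f_{t}(c_{t})\rangle$ (Theorem \ref{genators of ideal}) forces the existence of the non-prime, hence non--insulated-prime, ideal $I(H,S)+\langle 1-c_{t}^{2}\rangle$; any repaired argument will need this graded-ness step, but even with it the ``exactly one ideal'' dichotomy does not follow by the route you sketch.
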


\begin{proof}
(1) Assume every ideal of $L$ is insulated prime. We claim $L$ is a simple
ring. Suppose, by way of contradiction, $L$ contains non-zero ideals. Since $%
\{0\}$ is an insulated prime and not a maximal ideal of $L$, it follows from
Theorem \ref{Type I}, that $L\cong M_{n}(K[x,x^{-1}])$ for some positive
integer $n$. But $M_{n}(K[x,x^{-1}])$ contains non-zero ideals which are not
maximal and if they were to be insulated prime, they must be (non-zero)
maximal graded ideals, by Theorem \ref{Type I}. But this is not possible,
since $M_{n}(K[x,x^{-1}])$ is graded-simple, that is, it contains no
non-zero graded ideals. Hence $L$ must be a simple ring. The converse is
obvious.

(2) Suppose every non-zero ideal is insulated prime. If $L$ is a simple
ring, we are done. Suppose $L$ contains non-zero ideals. We first claim that
every ideal of $L$ must be graded. Suppose, on the contrary, there is a
non-graded ideal $I$. By Theorem \ref{genators of ideal}, $I$ will be of the
form $I=I(H,S)+{\displaystyle\sum\limits_{t\in T}}<f_{t}(c_{t})>$ where $%
H=I\cap E^{0}$, $S=\{v\in B_{H}:v^{H}\in I\}$, $T$ is a non-empty index set,
for each $t\in T$, $c_{t}$ is a cycle without exits in $E\backslash(H,S)$
and $f_{t}(x)\in K[x]$ with a non-zero constant term. But then, for a fixed $%
t\in T$, we can construct the ideal $A=I(H,S)+<1-c_{t}^{2}>$ and, as $%
1-x^{2} $ is not an irreducible polynomial in $K[x,x^{-1}]$, $A$ is not a
prime ideal of $L$ and hence not insulated prime. This contradiction shows
that every non-zero ideal $I$ of $L$ is a graded ideal. By Theorem \ref{Type
I}, every non-zero ideal of $L$ must then be a maximal graded ideal such
that $L/I=M_{n}(K[x,x^{-1}])$ \ for some $n>0$. We claim that $L$ has
exactly one non-zero ideal. Suppose, on the contrary, there are two distinct
non-zero ideals $A,B$ in $L$. By the maximality of $A$ and $B$, $A\nsubseteq
B$, $B\nsubseteq A$ and $A+B=L$ . If $A\cap B$ is non-zero, then, by
hypothesis, it is insulated prime and hence prime and so $AB\subseteq A\cap
B $ will imply $A\subseteq A\cap B$ or $B\subseteq A\cap B$, a
contradiction. Hence $A\cap B=0$. But then $L=A\oplus B$ and this again
leads to a contradiction since $A\cong L/B\cong M_{n}(K[x,x^{-1}])$ will
then contain non-graded ideals contradicting the fact that every ideal of $L$
is graded. Thus $L$ contains exactly one non-zero ideal $I$ which is graded
and $L/I\cong M_{n}(K[x,x^{-1}])$ for some positive integer $n$. Since such
an ideal $I$ is always insulated prime, the converse holds.
\end{proof}

\end{document}